\newcommand{\wt}[1]{{\widetilde{#1}}}
\newcommand{\wh}[1]{{\widehat{#1}}}
\newcommand{\ol}[1]{\overline{#1}}
\renewcommand{\emph}[1]{{\it #1}}
\newcommand{\E}{\mathbf{E}}
\newcommand{\R}{\mathbb{R}}
\renewcommand{\l}{\ell}
\renewcommand{\P}{\mathbf{P}}
\newcommand{\cH}{\mathcal{H}}
\newcommand{\BH}{\textnormal{BH}}
\newcommand{\FDP}{\textnormal{FDP}}
\begin{document}

\title{False discovery proportion envelopes with {$m$-consistency}}

\author{ \name Iqraa Meah \email iqraa.meah@inserm.fr \\
	\addr Center for Research in Epidemiology and StatisticS (CRESS)\\
	Universit\'e Paris Cit\'e and Universit\'e Sorbonne Paris Nord, Inserm, INRAE\\
	F-75004 Paris, France
       \AND
       \name Gilles Blanchard \email gilles.blanchard@universite-paris-saclay.fr \\
       \addr Institut Math\'ematiques de Orsay (IMO)\\
	CNRS, Inria, Universit\'e Paris-Saclay\\
	F-91405 Orsay Cedex
        \AND
       \name Etienne Roquain \email etienne.roquain@sorbonne-universite.fr \\
       \addr 
Laboratoire de Probabilit\'es, Statistique et Mod\'elisation, \\CNRS, Sorbonne Universit\'e, Universit\'e de Paris.\\
       Sorbonne Universit\'e\\
       4 place Jussieu, 75005, Paris}


\maketitle

\begin{abstract}
	We provide new {nonasymptotic} false discovery proportion (FDP) confidence envelopes in several multiple testing settings relevant for modern high dimensional-data
  	methods. We revisit the multiple testing scenarios considered in the recent work of \cite{katsevich2020simultaneous}:
  	top-$k$, preordered (including knockoffs), online. {Our emphasis is on obtaining FDP {confidence} bounds that both have nonasymptotical coverage and are asymptotically accurate in a specific sense, as the number $m$ of tested hypotheses grows. Namely, we introduce
          and study the property (which we call $m$-consistency) 
          that the confidence bound converges to or below the desired level $\alpha$ when applied to a {specific} reference $\alpha$-level false discovery rate (FDR) controlling procedure.}
 	{In this perspective, we derive new bounds that provide improvements over existing ones, both theoretically and practically, and are suitable for situations where at least a moderate number of rejections is expected}. These improvements are illustrated with numerical experiments and real data examples. In particular, the improvement is significant in the knockoffs setting, which shows the impact of the method for a practical use. 
	As side results, we introduce a new confidence envelope for the empirical cumulative distribution function of i.i.d. uniform variables and we provide new power results in sparse cases, both being of independent interest.	
\end{abstract}

\begin{keywords}
 Confidence envelope, False discovery rate, Knockoffs, Posthoc inference, Online multiple testing
\end{keywords}

\section{Introduction}

\subsection{Background}

Multiple inference is a crucial issue in many modern, high dimensional, and massive data sets, for which a large number of variables are considered and many questions naturally emerge, either simultaneously or sequentially. 
Recent statistical inference has thus turned to designing methods that guard against false discoveries and selection effect, see \cite{cui2021handbook,robertson2023online} for recent reviews on that topic. A key quantity is typically  the false discovery proportion (FDP), that is, the proportion of false discoveries within the selection \citep{BenjaminiHochberg95}. 

Among classical methods, finding confidence bounds on the FDP that are valid after a user data-driven selection (`post hoc' FDP bounds), has retained attention since the seminal works of \cite{genovese2004stochastic,genovese2006exceedance,goeman2011multiple}. 
The strategy followed by these works is to build confidence bounds {\it valid uniformly over all selection subsets}, which {\it de facto} provides a bound valid for any data-driven selection subset.
A number of such FDP bounds have been proposed since, either based on 
a `closed testing' paradigm \citep{hemerik2019permutation,goeman2019simultaneous,GHS2021,vesely2023permutation},   a `reference family' \citep{blanchard2020post,durand2020post}, or a specific prior distribution in a Bayesian framework \citep{perrot2021post}. 
It should also be noted that methods providing bounds valid uniformly over {\it some particular} selection subsets  can also be used to provide bounds
valid on {\it any} subsets by using 
an `interpolation' technique, see, e.g., \cite{blanchard2020post}.
This is the case for instance for bounds based upon an empirical distribution function confidence band, as investigated by \cite{meinshausen2005lower,meinshausen2006false,MeinRice2006,dumbgen2023new}. 
Loosely, we will refer to such (potentially partial) FDP bounds as {\it FDP confidence envelopes} in the sequel.

Recently, finding FDP confidence envelopes has been extended to different contexts of interest in \cite{katsevich2020simultaneous} (KR below for short), including knockoffs \citep{barber2015controlling,CFJL2018} and online multiple testing \citep{AharoniRossetGAI}. For this, their bounds are tailored on particular nested `paths', and employ accurate martingale techniques. In addition, \cite{li2024simultaneous} have recently investigated  specifically the case of the knockoffs setting  by using a `joint' $k$-FWER error rate control (see also \citealp{genovese2006exceedance,meinshausen2006false,blanchard2020post}), possibly in combination with closed testing.

\subsection{New insight: {$m$-consistency}}

{The main thrust of this paper is to look at FDP confidence envelopes from the angle of a particular property that we call {\it {$m$}-consistency} {($m$ denoting the number of hypotheses under consideration)}. First, recall that the false discovery rate (FDR) is the expectation of the FDP, which is a type I error rate measure with increasing popularity since the seminal work of \cite{BenjaminiHochberg95}. Informally, an FDP confidence envelope is {$m$}-consistent, 
{in relation to} a {given} {reference} FDR-controlling selection {procedure, if the envelope value for the set output by that procedure} 
{converges} to (or below) the corresponding nominal {FDR} value, at least asymptotically {when the total number $m$ of hypotheses tends
to infinity.}}
This property is important for several reasons:
\begin{itemize}
\item FDR controlling procedures output particular selection sets that are widely used in practice. Hence, it is very useful to provide an accurate FDP confidence bound for these particular rejection sets. This is the case for instance for the commonly used Benjamini-Hochberg (BH) procedure at a level $\alpha$ 
 for which the FDP bound should be close to $\alpha$, 
  at least in `favorable' cases;
\item a zoo of FDP confidence envelopes have been proposed in previous literature, and we see $m$-consistency as a principled way to discard some of them while putting the emphasis on others; 
\item searching for $m$-consistency can also lead to new bounds that are accurate for a moderate sample size.
\end{itemize}

{It turns out that most of the existing {FDP} bounds, while being accurate in certain regimes, are not {$m$}-consistent {with respect to classical FDR controlling procedures}. In  particular, this is the case for {the Simes bound \citep{Sim1986} and} those of \cite{katsevich2020simultaneous} {with respect to the BH procedure,} because of a {an incompressible} constant factor (larger than 1) in front of the FDP estimate. The present paper proposes to fill this gap by introducing new envelopes that are {$m$}-consistent. In a nutshell, we replace  the constant in front of the FDP estimate by a function that tends to $1$ in an  
asymptotical regime {under broad conditions including sparse asymptotical settings (i.e., the proportion of false null hypotheses tends to 0)}. 
{We stress that this notion of $m$-consistency only concerns a vanishing gap
between the FDP bound and the FDR nominal level of a reference procedure. It does
{\it not} require that the individual tests are consistent in the usual sense, i.e. we do
not assume that they reject a false null hypothesis with probability tending to 1;
nor does it require that the reference procedure has {full asymptotic power.}}}

{Let us emphasize again that the envelopes developed in this work have coverage holding in a {\it non-asymptotical} sense. Here, {$m$-}consistency means that on top of this strong non-asymptotical guarantee, the {FDP} bound satisfies an additional sharpness condition in an asymptotical sense and for some scenarios of interest, including sparse ones.}

\subsection{Settings}

Following \cite{katsevich2020simultaneous}, we consider the three following multiple testing settings for which a `path' means a (possibly random) nested sequence of candidate rejection sets:
\begin{itemize}
\item Top-$k$:  the classical multiple testing setting where the user tests a finite number $m$ of null hypotheses and observes simultaneously a family of corresponding $p$-values.
  This is the framework of the seminal paper of \cite{BenjaminiHochberg95} and of the majority of the follow-up papers. In that case, the path  is composed of the hypotheses corresponding to the top-$k$ most significant $p$-values (i.e. ranked
  in increasing order), for varying $k$.
\item Pre-ordered: we observe $p$-values for a finite set of cardinal $m$ of null hypotheses, which are {\it a priori} arranged according to some ordering. In that setting, the signal (if any) is primarily carried by the ordering:
  alternatives are expected to be more likely to have a small rank.
  Correspondingly the path in that case is obtained by $p$-value thresholding (for fixed threshold) of the first $k$ hypotheses
  w.r.t. that order, for varying $k$. A typical instance is the knockoffs setting \citep{barber2015controlling,CFJL2018}, where the null hypotheses come from a high-dimensional linear regression model and one wants to test whether each of the $m$ variables is associated with the response. The ordering is data-dependent and comes from an ancillary statistic independent of the tests themselves, so that one can argue conditionally and consider the ordering (and path) as fixed.
\item Online: the null hypotheses come sequentially, and there is a corresponding 
  potentially infinite stream of $p$-values. An irrevocable decision
  (reject or not) has to be taken in turn for each new hypothesis,  depending on past observations only. The path is naturally defined according to the set of rejections until time $t$,
  for varying $t$. 
  {It is markedly different from the pre-ordered setting because decisions are irrevocable and the set of nulls is not a pre-specified finite set.} 
\end{itemize}
Let us introduce notation that encompasses the three settings mentioned above: the set of hypotheses is denoted by $\cH$ (potentially infinite), the set of null hypotheses $\cH_0$ is an unknown subset of $\cH$, and a path $\Pi=(R_k, k\ge 1)$ (with convention $R_0=\emptyset$) is an ordered sequence of nested subsets of $\cH$ that depends only on the observations. A confidence envelope is a sequence $(\ol{\FDP}_k, k\ge 1)$ (with convention $\ol{\FDP}_0=0$) of random variables valued in $[0,1]$, depending only on the observations, such that, for some pre-specified level $\delta$, we have
\begin{equation}\label{confenvelop}
	\P\paren{\forall k\geq 1, \FDP(R_k)\le \ol{\FDP}_k}\ge 1-\delta,
\end{equation}
where $\FDP(R_k)=\frac{|R_k\cap \cH_0|}{|R_k|\vee 1}$ is the FDP of the set $R_k$.
In \eqref{confenvelop}, the guarantee is non-asymptotic and uniform in $k$, which means that it corresponds to confidence bounds valid  uniformly over the subsets of the path. 
Also, the distribution $\P$ is relative to the  $p$-value model, which will be specified further on and depends on the considered framework.

\begin{remark}[Interpolation]\label{rem:interp}
  Any FDP confidence envelope of the type \eqref{confenvelop} also leads to a post hoc FDP bound valid uniformly for all $R\subset \cH$: specifically, by using the interpolation method (see, e.g., \citealp{blanchard2020post,GHS2021,li2024simultaneous}), if \eqref{confenvelop} holds then the same property also holds with the sharper bound $(\wt{\FDP}_k, k\geq 1)$ given by 
\begin{align}
\wt{\FDP}_k& = 
\frac{\min_{k'\leq k} \set{
|R_k|- |R_{k'}|
+|R_{k'}| \ol{\FDP}_{k'} }}{|R_k|\vee 1} \leq \ol{\FDP}_k
\label{equ-interp3},
\end{align}
due to the fact that the number of false positives in $R_k$ is always bounded by the number of false positives in $R_{k'}\subset R_k$ plus the number of elements of  $R_k\setminus R_{k'}$.
\end{remark}

Particular subsets of $\Pi=(R_k, k\geq 1)$ that are of interest are those controlling the FDR. Given a nominal level $\alpha$, a `reference' procedure chooses a data-dependent $\hat{k}_\alpha$ such that $\e[1]{\FDP(R_{\hat{k}_\alpha})} \leq \alpha$. Depending on the setting, we consider different reference procedures:
\begin{itemize}
\item Top-$k$ setting: the reference FDR controlling procedure is the Benjamini-Hochberg (BH) step-up procedure, see \cite{BenjaminiHochberg95};
\item Pre-ordered setting: the reference procedure is the Lei-Fithian (LF) adaptive Selective sequential step-up procedure, see \cite{lei2016power} (itself being a generalization of the procedure of \citealp{li2017accumulation});
 \item Online setting: the reference procedure is the (LORD) procedure, see \cite{javanmard2018online} and more precisely the improved version of \cite{ramdas2017online}.
\end{itemize}
As announced, for all these procedures, the {\it expectation} of the FDP (that is, the FDR) is guaranteed to be below $\alpha$. 
{Additionally, in many scenarios considered as prototypical in the literature, it has been established that}
{the FDP {of a reference procedure} concentrates around its expectation {as the number of tested
    hypotheses $m$ tends to infinity}, see, e.g., \cite{genovese2004stochastic,neuvial08asymptotic,neuvial13asymptotic}.}
{In such a situation, it is thus natural to expect that a confidence envelope on the FDP should asymptotically converge to (or below) the target level $\alpha$ when applied to a reference procedure.}


{This motivates, in complement to the non-asymptotic control \eqref{confenvelop},
the introduction of a notion of $m$-consistency of an FDP envelope in relation to a reference procedure
and a sequence of models, as follows.} 

\begin{definition}[$m$-consistency]\label{def-consistency}
  {Let $\delta,\alpha\in(0,1)$ be fixed.
  For each $m\geq 1$, let be
  \begin{itemize}
\item  $\P^{(m)}$  a multiple testing distribution model over the  set of null hypotheses $\cH^{(m)}=\{1,\dots,m\}$ and a set of true null hypotheses $\cH_0^{(m)}\subset \cH^{(m)}$ (denote $\cH_1^{(m)} = \cH^{(m)} \setminus \cH_0^{(m)}$);
\item  $\Pi^{(m)}=(R_k^{(m)}, k\geq 1)$ a possibly random path of nested subsets of $\cH^{(m)}$;
\item $(\ol{\FDP}^{(m)}_k, k\geq 1)$ a confidence envelope at level $1-\delta$ over that path, i.e. satisfying~\eqref{confenvelop};
 \item ${\hat{k}^{(m)}_\alpha}$ a procedure choosing a rejection set from the path $\Pi^{(m)}$ with guaranteed FDR control at level $\alpha$, i.e. satisfying
  $\E^{(m)}\brac{\FDP\paren[1]{R^{(m)}_{\hat{k}^{(m)}_\alpha}}} \leq \alpha$.
\end{itemize}
 Then the confidence envelope 
  is said to be $m$-consistent with respect to the sequence $(\P^{(m)},m\geq 1)$ and 
to the FDR controlling procedure $R^{(m)}_{\hat{k}^{(m)}_\alpha}\in \Pi^{(m)}$  at level $\alpha$ 
if for all $\epsilon>0$,
\begin{equation}\label{equ-consistency}
\lim_{m \rightarrow \infty} \P^{(m)} \left( 
  \ol{\FDP}^{(m)}_{\hat{k}^{(m)}_\alpha}-\alpha
\geq \epsilon\right) = 0.
\end{equation}}
\end{definition}
{When applying this definition with respect to the reference procedures described above (BH/LF/LORD), we will speak about BH/LF/LORD $m$-consistency for short.}
In addition, in the above definition, $\P^{(m)}$ stands for a multiple testing model with $m$ hypotheses that is to be specified. We will be interested in standard model sequences that represent relevant practical situations, and in particular {\it sparse} cases where only a vanishing proportion of null hypotheses are false when $m$ tends to infinity. 
The above definition applies {transparently} for the two first considered settings (top-$k$ and pre-ordered). Note that due to \eqref{confenvelop}, we have 
 \begin{equation}\label{confenvelopkalpha}
	\P\paren{\forall \alpha\in (0,1), \FDP(R_{\hat{k}_\alpha})\leq \ol{\FDP}_{\hat{k}_\alpha}}\geq 1-\delta.
	 \end{equation}
 Hence, \eqref{equ-consistency} comes as an asymptotical accuracy guarantee in addition to the non-asymptotical coverage property \eqref{confenvelopkalpha}. {The uniformity in $\alpha$ in \eqref{confenvelopkalpha} allows for choosing $\alpha$ in a post hoc manner, while maintaining the false discovery control, that is, for any data-dependent choice of $\hat{\alpha}$, $\FDP(R_{\hat{k}_{\hat{\alpha}}})\leq \ol{\FDP}_{\hat{k}_{\hat{\alpha}}}$ with probability at least $1-\delta$. For $m$-consistency, a similar (but weaker) $\alpha$-uniformity property can be obtained, see Remark~\ref{rem:alphachapconsist} below.}

{In the third setting (online), the definition applies in the following sense: \eqref{equ-consistency} reads formally as
\begin{equation}\label{equ-consistency-online}
\lim_{m \rightarrow \infty} \P^{(\infty)} \left( 
  \ol{\FDP}_{m}-\alpha
\geq \epsilon\right) = 0.
\end{equation}
Here, the distribution of the data $\P^{(m)}$ does not depend on $m$ and corresponds to $\P^{(\infty)}$ the joint distribution of the countable sequence of observations. 
The path $\Pi^{(m)}=(R_k^{(m)}, k\geq 1)=(R_k, k\geq 1)$ does not depend on $m$ and corresponds to the sequence of rejections along the time. 
Also, $\hat{k}_\alpha^{(m)} =m$  in that setting, that is, the bound is built for $R_k$ for $k=m$. In contrast with the two first settings, the procedure of interest is represented by the path itself rather than by some particular choice $\hat{k}_\alpha^{(m)} $ of $k$.
}

{
{

\begin{remark}\label{rem:goeman}
{A notion of consistency for FDP bounds has been
  introduced by \citet[Section 7]{goeman2019simultaneous}.
  In our terminology their notion could be dubbed $(n,m)$-consistency, as
  both the number of hypotheses $m$ and a parameter $n$ modelizing signal-to-noise
  ratio (e.g., size of the underlying sample, or signal strength) grow to infinity. The authors
  consider the following scenario:
  \begin{itemize}
    \item the bound is considered on any set $S_m$ such that, conditional to
      their index being in the set $S_m$, the $p$-values are independently drawn from a mixture
      \begin{equation} \label{eq:goemanmixture}
        \gamma \mathrm{Unif}[0,1] + (1-\gamma) Q_{1,n}.
      \end{equation}
    \item $Q_{1,n}$ approaches a Dirac $\delta_1$ distribution as $n\rightarrow \infty$. (Individual test consistency
      under the alternative)
    \item $\abs{S_m}$ remains lower bounded by a linear function of $m$.
  \end{itemize}
  Under these assumptions the authors show that the Simes-based closed testing bound for $S_m$ is consistent,
  in the sense that it converges asymptotically to $\gamma$. The above assumptions allow,
  in the non-sparse case, to take $S_m=\cH_1^{(m)}$
  and further conclude to the consistency of the bound for any rejection set of size growing linearly with $m$.\\
  The notion considered in the present work is of a different nature, as at least one of the above assumptions
  is not met
  in the typical scenarios we will consider:
  \begin{itemize}
  \item in the non-sparse setting, we do not require individual test consistency and consider fixed signal strength.
  \item in the sparse setting, we consider growing signal strength as $m\rightarrow \infty$, which implies individual test consistency but in a way that is only sightly above the threshold of global signal detectability.
    In such a scenario $\abs[1]{\cH_1^{(m)}}$ does not grow linearly with $m$ and neither do the size of rejections sets
    $\abs{S_m}$ of interest.
  \item we evaluate the bound on rejection sets
    from reference procedures that do not follow the posited mixture distribution~\eqref{eq:goemanmixture}.
    For example, conditional to being rejected by BH the $p$-values do not follow a Uniform/Alternative independent mixture
  distribution.
  \end{itemize}
  Thus, in the scenarios we will focus on, some ambiguity is asymptotically remaining and one cannot expect full asymptotic signal identifiability.
  In fact, we show (see Section~\ref{sec:closed-testing}) that in such a scenario, the Simes-based closed testing bound is not $m$-consistent
  in general.}
\end{remark}



\subsection{Contributions}

Our findings are as follows: 
\begin{itemize}
\item In each of the considered settings (top-$k$, pre-ordered, online), we provide new (non-asymptotical) FDP confidence envelopes 
that are $m$-consistent under general conditions, including sparse configurations, see Proposition~\ref{prop-consistencytopk}, Corollary~\ref{cor-consistencytopk} (top-$k$), Proposition~\ref{prop-consistency-preordered}, Corollary~\ref{cor-consistency-preordered} (pre-ordered)  and Proposition~\ref{prop-consistency-online}, Corollary~\ref{cor-consistency-online} (online). Table~\ref{tab:recap} provides a summary of the considered procedures in the different contexts, including the existing and new ones. It is worth noting that in the top-$k$ setting, the envelope based on the DKW inequality \citep{Mass1990} is consistent under moderate sparsity assumptions only, while the new  envelope based on Wellner's inequality \citep{shorack2009empirical} covers  the whole sparsity range (Corollary~\ref{cor-consistencytopk}).
\item As a byproduct, our results provide (non-asymptotical) confidence bounds on the FDP for standard FDR-controlling procedures,  that are also asymptotically sharp ($m$-consistency) and for which a data-driven choice of the level $\alpha$ is allowed. 
In particular, in the top-$k$ setting,  this gives a new sharp confidence bound for the achieved FDP of the BH procedure while tuning the level from the same data, see \eqref{BHtopkWellner} below.

\item {In the top-$k$ setting, we also theoretically show that the Simes bound can be $m$-inconsistent, even after applying a closed-testing improvement \citep{goeman2011multiple}, see Section~\ref{sec:closed-testing}. Hence, closed-testing does not solve the $m$-inconsistency issue in itself. Also, we develop {\it adaptive} versions of our bounds (for which the proportion of null hypotheses is simultaneously estimated), which can be seen as an improvement stage that is less computationally demanding than closed-testing, while bringing already a large part of the latter's improvement when combined with the interpolation technique of Remark~\ref{rem:interp}, see Section~\ref{sec:closedexp}.}

\item In the pre-ordered setting, including the `knockoff' case, we introduce new envelopes, called `Freedman' and `KR-U', which are the two first (provably) $m$-consistent confidence bounds in that context to our knowledge. This is an important contribution since the knockoff method is one of the leading methodologies  in the covariate testing literature of the last decade. In addition, KR-U is   shown to behave suitably, even for moderate sample size, see Section~\ref{sec:num}. 

\item {In the online setting, the new envelopes, called `Freedman' and `KR-U', provide an additional information on the LORD procedure via an FDP upper-envelope uniformly valid along time, and converging to the prescribed level $\alpha$ on the long run, see Figure~\ref{fig:onlineimpc}.
}


\item Our study is based on dedicated tools of independent interest,  based on uniform versions of classical deviation inequalities, see Corollary~\ref{cor:wellner} (Wellner's inequality), Corollary~\ref{cor:freedman} (Freedman's inequality). {Both can be seen as a form of `stitching' together elementary inequalities, see \cite{Howardetal2021} for recent developments
    of this principle. 
    {In addition, the Freedman-type bounds in the pre-ordered and online settings  are both based on a single martingale-type result, see Theorem~\ref{th-mainresult}.}
   }
\end{itemize}

\begin{table}[h!]
\begin{tabular}{|c|cccccc|}
\hline
& Simes & DKW & KR & Wellner (new) & Freedman (new) & KR-U (new)  \\\hline
Top-$k$&  No & Yes &  No & Yes & & \\
Pre-ordered&   &  &  No &  & Yes & Yes\\
Online & & &  No & & Yes & Yes\\
\hline
\end{tabular}
\caption{{$m$-consistency property (Yes or No) for different envelopes, depending on the considered contexts.   `$m$-consistent' means consistent at least in a particular (reasonable) configuration and with respect to the corresponding reference procedure BH/LF/LORD. Blank means undefined in that context. \label{tab:recap}}}
\end{table}

\begin{remark}[$\alpha$-uniform $m$-consistency]\label{rem:alphachapconsist}
{In the top-$k$ and pre-ordered setting, 
it is possible to show slightly more than the convergence \eqref{equ-consistency}, namely we can aim for  
$$\P^{(m)} \left( \sup_{\alpha\in [\alpha_0,1)} 
\set{
  \ol{\FDP}^{(m)}_{\hat{k}^{(m)}_\alpha}-\alpha}
\geq \epsilon\right) = o(1),$$
for any fixed $\alpha_0\in (0,1)$. 
More precisely, we can easily show that this $\alpha$-uniform $m$-consistency property can be obtained in Corollaries~\ref{cor-consistencytopk}~and~\ref{cor-consistency-preordered} below by monotonicity of the reference procedure. 
This allows to account for possible `data snooping' from the user, that is, the consistency property also holds for $\alpha=\hat{\alpha}$ possibly depending on the data, provided that it is larger than some $\alpha_0>0$.
However, for the online setting, such uniformity is out of reach since the full path itself already depends on $\alpha$.}
\end{remark}

\begin{remark}[FDP concentration and $m$-consistency]
{Given the FDR control and \eqref{equ-consistency}, in cases where the FDP of the reference procedure concentrates around its expectation as $m$ grows \citep{genovese2004stochastic,neuvial08asymptotic,neuvial13asymptotic}, we expect that bounds of the form $\alpha + \Delta_{m,\alpha,\delta}$ with $\Delta_{m,\alpha,\delta}=o(1)$ should hold in the sense of Definition~\ref{def-consistency}, and thus $m$-consistent bounds can be built. In such situations, using $m$-inconsistent bounds (such as the Simes bound in the top-$k$ setting) is questionable.}
\end{remark}

\section{Results in the top-$k$ case}

\subsection{Top-$k$ setting}\label{sec:topksetting}

We consider the classical multiple setting where we observe $m$ independent $p$-values $p_1, \dots, p_m$, testing $m$ null hypotheses $H_1, \dots, H_m$. The set of true nulls is denoted by $\cH_0$, which is of cardinal $m_0$ and we denote $\pi_0=m_0/m\in (0,1)$.  We assume that the $p$-values are uniformly distributed under the null, that is, for all $i\in \cH_0$, $p_i\sim U(0,1)$. 

We consider here the task of building a $(1-\delta)$-confidence envelope \eqref{confenvelop} for the top-$k$ path
\begin{equation}\label{pathtopk}
R_k=\{1\leq i\leq m\::\: p_i\leq  p_{(k)}\},\:\:\: k=1,\dots,m.
\end{equation}
A rejection set of particular interest is the BH rejection set, given by $R_{\hat{k}_\alpha}$ 
where 
\begin{align}\label{equBH}
\hat{k}_\alpha=\max\left\{k \in \mathbb{N}\::\: \wh{\FDP}_k \leq \alpha\right\}, \:\:\wh{\FDP}_k =mp_{k}/k,
\end{align}
(with the convention $R_0=\emptyset$). 

\subsection{Existing envelopes}

Let us first review the prominent confidence envelopes that have been considered in the literature.
Let $U_1,\dots,U_n$ be $n\geq 1$ i.i.d. uniform random variables. For $\delta\in (0,1)$, each of the following (uniform) inequalities holds with probability at least $1-\delta$:
\begin{itemize}
\item \cite{Sim1986} (or \citealp{robbins1954one}): for all $t\in (0,1)$,
$
n^{-1} \sum_{i=1}^n \ind{U_i\leq t} \leq t /\delta.
$
\item DKW \citep{Mass1990}: for all $t\in (0,1)$,
$
n^{-1} \sum_{i=1}^n \ind{U_i\leq t} \leq t + \sqrt{\log(1/\delta)/2}\: n^{-1/2} .
$
\item KR \citep{katsevich2020simultaneous} (for $\delta\leq 0.31$), for all $t\in (0,1)$,
$
n^{-1} \sum_{i=1}^n \ind{U_i\leq t}\leq  \frac{\log(1/\delta)}{\log(1+\log(1/\delta))}(1/n + t).
$
\end{itemize}

Taking $(U_1,\dots,U_n)=(p_i,i\in \cH_0)$, $n=m_0$, and $t=p_{(k)}$ in the bounds above gives the following  confidence envelopes (in the sense of \eqref{confenvelop}) for the top-$k$ path: for $k\in\{ 1,\dots,m\}$,
\begin{align}
\ol{\FDP}^{{\tiny \mbox{Simes}}}_k &:=1\wedge \frac{ m p_{(k)}}{k\delta}; \label{topkSimes}\\
\ol{\FDP}^{{\tiny \mbox{DKW}}}_k &:=  1\wedge\left(\frac{ m p_{(k)}}{k} + \frac{m^{1/2}\sqrt{0.5 \log 1/\delta}}{k}\right) ; \label{topkDKW}\\
\ol{\FDP}^{{\tiny \mbox{KR}}}_k &:= 1\wedge\left( \frac{\log(1/\delta)}{\log(1+\log(1/\delta))} \left(\frac{ m p_{(k)}}{k}  + 1/k\right)\right) ,
\label{topkKR}\end{align}
the last inequality requiring in addition $\delta\leq 0.31$. Note that we can slightly improve these bounds by taking appropriate integer parts, but we will ignore this detail further on for the sake of simplicity.

\subsection{New envelope}

In addition to the above envelopes, this section presents a new one deduced from a new `uniform' variation of Wellner's inequality (recalled in Lemma~\ref{Wellnerineq}). Let us first define the function
\begin{equation}\label{funh}
h(\lambda)=\lambda(\log \lambda -1)+1,\:\:\:\lambda>1.
\end{equation}
Lemma~\ref{lem:h} gathers some properties of $h$, including explicit accurate bounds for $h$ and $h^{-1}$.

\begin{proposition}[Uniform version of Wellner's inequality]\label{cor:wellner}
Let $U_1,\dots,U_n$ be $n\geq 1$ i.i.d. uniform random variables and $\kappa= \pi^2/6$. For all $\delta \in (0,1)$, we have with probability at least $1-\delta $,
\begin{equation}\label{WellnerUniform}
\forall t \in (0,1), \:\: n^{-1} \sum_{i=1}^n \ind{U_i\leq t} \leq t\: h^{-1}\left(\frac{\log(\kappa/\delta) + 2 \log\left(\lceil  \log_2(1/t) \rceil\right)}{n g(t) }\right),
\end{equation}
for $g(t)=2^{-\lceil  \log_2(1/t) \rceil}/(1-2^{-\lceil  \log_2(1/t) \rceil})\geq t/2$ and $h(\cdot)$ defined by \eqref{funh}. 
In particular, with probability at least $1-\delta $,
\begin{equation}\label{WellnerUniformSimple}
\forall t \in (0,1),\:\:  n^{-1} \sum_{i=1}^n \ind{U_i\leq t} \leq t\: h^{-1}\left(\frac{2\log(\kappa/\delta) + 4 \log\left(1+  \log_2(1/t) \right)}{n t }\right).
\end{equation}
\end{proposition}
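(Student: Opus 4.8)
The strategy is to establish \eqref{WellnerUniform} via a "stitching" argument over a dyadic grid of thresholds, and then derive \eqref{WellnerUniformSimple} by crude but explicit bounds on the quantities appearing in \eqref{WellnerUniform}. First I would recall the (non-uniform) Wellner inequality from Lemma~\ref{Wellnerineq}: for a fixed $t\in(0,1)$, $\P\paren{n^{-1}\sum_{i=1}^n \ind{U_i\leq t} \geq t\lambda} \leq e^{-nt\,h(\lambda)}$ for $\lambda>1$, or more precisely the version with the sharp exponent $n\,\psi(t,\lambda)$ that Wellner's bound actually gives, where $\psi$ dominates $t\,h(\lambda)$ (this is where $g(t)\geq t/2$ will enter). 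The key point is that the bound $n^{-1}\sum \ind{U_i\leq t}\leq t\,h^{-1}(x)$ is equivalent, after inverting $h$, to a tail bound of the form "empirical measure of $[0,t]$ exceeds $t\lambda$ with probability $\leq e^{-n g(t) h(\lambda)}$", valid for each fixed $t$.

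The main step is the passage from fixed $t$ to uniform-in-$t$. Partition $(0,1)$ into dyadic blocks $I_j=(2^{-j},2^{-(j-1)}]$ for $j\geq 1$; on the block $I_j$ one has $\lceil \log_2(1/t)\rceil = j$, so $g(t)=2^{-j}/(1-2^{-j})$ is constant on $I_j$ and the "budget" $\log(\kappa/\delta)+2\log j$ is also constant. For $t\in I_j$, monotonicity of $t\mapsto \sum\ind{U_i\leq t}$ and of the right-hand side lets me reduce the event $\set{\exists t\in I_j:\ n^{-1}\sum\ind{U_i\leq t} > t\,h^{-1}(\cdots)}$ to the single right endpoint $t_j=2^{-(j-1)}$ (with the lower value $g(t)$ on the block — this requires a small monotonicity check that $t\mapsto t\,h^{-1}(c/(n g_j))$ with $g_j$ the block value is still increasing, which holds since $h^{-1}$ is increasing and the argument $c/(ng_j)$ is fixed on the block). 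Apply the fixed-$t$ tail bound at $t_j$ with $\lambda_j = h^{-1}\paren{\frac{\log(\kappa/\delta)+2\log j}{n g_j}}$, giving failure probability at most $e^{-n g_j h(\lambda_j)} = \delta/(\kappa j^2)$ on block $j$. Summing over $j\geq 1$ and using $\sum_{j\geq 1} j^{-2} = \kappa = \pi^2/6$ yields total failure probability at most $\delta$, proving \eqref{WellnerUniform}. One has to be slightly careful at $t$ near $1$ (the block $j=1$, where $\lceil\log_2(1/t)\rceil$ could be $0$ or $1$) and to check the displayed lower bound $g(t)\geq t/2$, which follows from $2^{-\lceil x\rceil}\geq 2^{-x-1} = \tfrac12 2^{-x}$ applied with $x=\log_2(1/t)$ and $1-2^{-\lceil\log_2(1/t)\rceil}\leq 1$.

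For \eqref{WellnerUniformSimple}, I would simply bound the argument of $h^{-1}$ in \eqref{WellnerUniform} from above: $g(t)\geq t/2$ gives $\frac{1}{n g(t)}\leq \frac{2}{nt}$, and $\lceil\log_2(1/t)\rceil \leq 1+\log_2(1/t)$ gives $2\log\lceil\log_2(1/t)\rceil \leq 2\log(1+\log_2(1/t))$, hence $\log(\kappa/\delta)+2\log\lceil\log_2(1/t)\rceil \leq \log(\kappa/\delta)+2\log(1+\log_2(1/t))$; multiplying the whole bound on the numerator by the factor-$2$ bound on $1/(ng(t))$ produces exactly $\frac{2\log(\kappa/\delta)+4\log(1+\log_2(1/t))}{nt}$. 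Since $h^{-1}$ is increasing (Lemma~\ref{lem:h}) and $t\geq 0$, replacing the argument by this larger quantity only increases the bound, giving \eqref{WellnerUniformSimple}.

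The main obstacle I anticipate is the bookkeeping in the stitching step: getting the constants exactly right so that the per-block failure probability is $\delta/(\kappa j^2)$ and the sum telescopes to $\delta$, and in particular making sure the reduction to the block endpoint is valid (monotonicity of both sides on each block, and correct handling of the endpoint $t=1$ and of the $\lceil\cdot\rceil$ at dyadic points). The probabilistic content is entirely in the one-line fixed-$t$ Wellner bound; everything else is deterministic monotonicity and a convergent series.
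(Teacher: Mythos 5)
There is a genuine gap in your uniformization step. You treat Lemma~\ref{Wellnerineq} as a \emph{pointwise} (fixed-$t$) tail bound and then try to recover uniformity in $t$ on each dyadic block $I_j=(2^{-j},2^{-(j-1)}]$ by "reducing the event to the right endpoint $t_j=2^{-(j-1)}$". That reduction is invalid: if $F_n(t^*)>t^*\lambda_j$ for some $t^*\in I_j$ (writing $F_n(t)=n^{-1}\sum_i\ind{U_i\leq t}$), monotonicity of $F_n$ only gives $F_n(t_j)\geq F_n(t^*)>t^*\lambda_j$, and since $t^*\leq t_j$ this does \emph{not} imply $F_n(t_j)>t_j\lambda_j$ --- the right-hand side $t\mapsto t\lambda_j$ being increasing is precisely what breaks the containment, not what saves it. The correct containment is $\{\exists t\in I_j: F_n(t)>t\lambda_j\}\subseteq\{F_n(t_j)>2^{-j}\lambda_j\}$, i.e.\ LHS at the right endpoint but RHS at the left endpoint; combined with the pointwise Chernoff bound this yields a guarantee of the form $F_n(t)\leq 2t\,h^{-1}(\cdot)$, with the factor $2$ \emph{outside} $h^{-1}$. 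Since $h^{-1}$ is concave with $h^{-1}(0)=1$ (Lemma~\ref{lem:h}), $h^{-1}(2x)<2h^{-1}(x)$, so this is strictly weaker than the claimed bound \eqref{WellnerUniform}, where the factor-$2$ loss sits \emph{inside} $h^{-1}$ (through $g(t)\geq t/2$ in the denominator of the argument). Your plan therefore cannot produce the stated constants.

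The missing ingredient is that Lemma~\ref{Wellnerineq} is already a \emph{maximal} inequality, uniform over $t\in[a,1]$ for each fixed left endpoint $a$: $\P(\exists t\in[a,1]:F_n(t)/t\geq\lambda)\leq e^{-nah(\lambda)/(1-a)}$. The paper's proof simply applies this at each dyadic scale $a=\tau_j=2^{-j}$ with confidence $\delta_j=\delta j^{-2}$ (defining $\lambda_j=h^{-1}(\log(1/\delta_j)(1-\tau_j)/(n\tau_j))$), takes a union bound over $j$, and then, for a given $t$, invokes the event indexed by $j_0=\lceil\log_2(1/t)\rceil$, for which $\tau_{j_0}\in[t/2,t]$; this is exactly how $g(t)=\tau_{j_0}/(1-\tau_{j_0})\geq t/2$ enters the exponent without any degradation of $\lambda$. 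Your treatment of the union bound normalization (per-block budget $\delta/(\kappa j^2)$ summing to $\delta$) and your derivation of \eqref{WellnerUniformSimple} from \eqref{WellnerUniform} (via $1/g(t)\leq 2/t$, $\lceil\log_2(1/t)\rceil\leq 1+\log_2(1/t)$, and monotonicity of $h^{-1}$) are both correct; it is only the passage from pointwise to uniform-in-$t$ within each block that needs to be replaced by a direct appeal to the maximal form of Wellner's inequality (or some other genuine maximal/martingale argument).
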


The proof of Proposition~\ref{cor:wellner} is given in Section~\ref{sec:newecdf}. It immediately leads to the following result.

\begin{theorem}\label{th-BH}
In the top-$k$ setting of Section~\ref{sec:topksetting}, the following quantity is a $(1-\delta)$-confidence envelope in the sense of \eqref{confenvelop} for the top-$k$ path:
\begin{equation}\label{topkWellner}
\ol{\FDP}_k^{{\tiny \mbox{Well}}} :=    1\wedge\left( \frac{ m p_{(k)}}{k}\: h^{-1}\left(\frac{2\log(\kappa/\delta) + 4 \log\left(1+  \log_2(1/p_{(k)}) \right)}{m p_{(k)} }\right)\right),\end{equation}
with $\kappa=\pi^2/6$.
\end{theorem}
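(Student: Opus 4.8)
The plan is to derive Theorem~\ref{th-BH} from the uniform Wellner inequality of Proposition~\ref{cor:wellner} applied to the null $p$-values, followed by an elementary monotonicity argument that eliminates the unknown $m_0$ from the bound.

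First I would apply the simplified statement \eqref{WellnerUniformSimple} to $(U_1,\dots,U_n)=(p_i)_{i\in\cH_0}$ with $n=m_0$; this is legitimate because under the model of Section~\ref{sec:topksetting} the null $p$-values are i.i.d.\ uniform on $(0,1)$. We obtain an event $\Omega_\delta$ with $\P(\Omega_\delta)\ge 1-\delta$ on which, simultaneously for all $t\in(0,1)$,
\[
\big|\{i\in\cH_0:\, p_i\le t\}\big| \;\le\; m_0\, t\; h^{-1}\!\left(\frac{2\log(\kappa/\delta)+4\log(1+\log_2(1/t))}{m_0\, t}\right).
\]
Since this holds for every deterministic $t\in(0,1)$, on $\Omega_\delta$ it holds in particular at $t=p_{(k)}$ for each $k\in\{1,\dots,m\}$ with $p_{(k)}\in(0,1)$ (when $p_{(k)}=1$ the envelope \eqref{topkWellner} equals $1$ and there is nothing to prove, while $p_{(k)}=0$ contributes no null $p$-value to $R_k$ outside a $\P$-null event). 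For the top-$k$ path, $R_k=\{i:\,p_i\le p_{(k)}\}$, so $|R_k\cap\cH_0|=|\{i\in\cH_0:\,p_i\le p_{(k)}\}|$ and $|R_k|\ge k\ge1$; hence on $\Omega_\delta$,
\[
\FDP(R_k)\;=\;\frac{|R_k\cap\cH_0|}{|R_k|\vee1}\;\le\;\frac{m_0\, p_{(k)}}{k}\; h^{-1}\!\left(\frac{2\log(\kappa/\delta)+4\log(1+\log_2(1/p_{(k)}))}{m_0\, p_{(k)}}\right).
\]

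It remains to replace the unknown $m_0$ by $m$. Fix $k$ and write $c=2\log(\kappa/\delta)+4\log(1+\log_2(1/p_{(k)}))>0$; I claim $s\mapsto s\,h^{-1}(c/s)$ is nondecreasing on $(0,\infty)$. Indeed, putting $\lambda=h^{-1}(c/s)\in(1,\infty)$ one has $s\,h^{-1}(c/s)=c\,\lambda/h(\lambda)$, and from \eqref{funh}, $h(\lambda)/\lambda=\log\lambda-1+1/\lambda$ has derivative $(\lambda-1)/\lambda^2>0$ on $(1,\infty)$, so $\lambda\mapsto h(\lambda)/\lambda$ is increasing, $\lambda\mapsto\lambda/h(\lambda)$ is decreasing, and $s\mapsto h^{-1}(c/s)$ is decreasing; composing gives the claim (these facts are also recorded in Lemma~\ref{lem:h}). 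Since $m_0\,p_{(k)}\le m\,p_{(k)}$, the displayed bound is at most $\frac{m\,p_{(k)}}{k}\,h^{-1}\!\big(c/(m\,p_{(k)})\big)$; combined with the trivial bound $\FDP(R_k)\le1$, this yields $\FDP(R_k)\le\ol{\FDP}_k^{{\tiny \mbox{Well}}}$ for all $k\ge1$ on $\Omega_\delta$, which is exactly \eqref{confenvelop}.

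I expect the monotonicity step to be the only subtle point: the Wellner bound features $n$ both as an outer prefactor and inside $h^{-1}$, so passing from $m_0$ to the coarser bound $m$ requires the computation above. The rest is routine, since Proposition~\ref{cor:wellner} already contains the hard work of making the deviation bound uniform over the threshold $t$, which is what licenses the substitution $t=p_{(k)}$ even though $p_{(k)}$ depends on all the $p$-values.
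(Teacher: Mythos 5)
Your proof is correct and follows exactly the paper's route: apply the uniform Wellner inequality \eqref{WellnerUniformSimple} to the null $p$-values with $n=m_0$, substitute $t=p_{(k)}$, and pass from $m_0$ to $m$ via the monotonicity of $x\mapsto x\,h^{-1}(c/x)$ recorded in Lemma~\ref{lem:h}. Your explicit verification of that monotonicity and your handling of the edge cases $p_{(k)}\in\{0,1\}$ are welcome details that the paper leaves implicit.
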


\begin{proof}
We use \eqref{WellnerUniformSimple} for $(U_1,\dots,U_n)=(p_i,i\in \cH_0)$, $n=m_0$, and $t=p_{(k)}$. We conclude by using $m_0\leq m$ and the monotonicity property of Lemma~\ref{lem:h}.
\end{proof}

\begin{remark}
Denoting by $\ol{F}_n(t)$ the RHS of \eqref{WellnerUniformSimple},  we can easily check
\begin{align*}
 \sup_{t\in ((\log\log n)/n,1)}\left( \sqrt{n}\frac{\ol{F}_n(t)-t}{\sqrt{t \log\left(1+  \log_2(1/t) \right)}}\right) 
 &=O(1),
\end{align*}
with a constant possibly depending on $\delta$. The iterated logarithm in the denominator is known from classical
asymptotic theory (convergence to a Brownian bridge) to be unimprovable for a uniform bound in the vicinity of $0$;
in this sense the above is a `finite law of the iterated logarithm (LIL) bound' \citep{jamieson2014lil}.
\end{remark}

\subsection{FDP confidence bounds for BH and $m$-consistency}

Applying the previous bounds for the particular BH rejection sets $R_{\hat{k}_\alpha}$ (see \eqref{equBH}) leads to the following result.

\begin{corollary}\label{cor-BH}
In the top-$k$ setting of Section~\ref{sec:topksetting}, for any $\alpha,\delta\in (0,1)$, the following quantities are $(1-\delta)$-confidence bounds for  $\FDP(R_{\hat{k}_\alpha})$, the FDP of the BH procedure at level $\alpha$:
\begin{align}
\ol{\FDP}^{{\tiny \mbox{Simes}}}_\alpha &:= 
\ind{\hat{k}_\alpha\geq 1}\wedge (\alpha/\delta); \label{BHtopkSimes}\\
\ol{\FDP}^{{\tiny \mbox{DKW}}}_\alpha &:=  \ind{\hat{k}_\alpha\geq 1}\wedge \left(\alpha + \frac{m^{1/2}\sqrt{0.5 \log 1/\delta}}{1\vee \hat{k}_\alpha}\right) ; \label{BHtopkDKW}\\
\ol{\FDP}^{{\tiny \mbox{KR}}}_\alpha &:= \ind{\hat{k}_\alpha\geq 1}\wedge \left( \frac{\log(1/\delta)}{\log(1+\log(1/\delta))} \left( \alpha + 1/(1\vee \hat{k}_\alpha)\right) \right) ;
\label{BHtopkKR}\\
\ol{\FDP}_\alpha^{{\tiny \mbox{Well}}} &:=   \ind{\hat{k}_\alpha\geq 1}\wedge \left(   \alpha\: h^{-1}\left(\frac{2\log(\kappa/\delta) + 4 \log\left(1+  \log_2\Big(\frac{m}{\alpha(1\vee \hat{k}_\alpha)}\Big) \right)}{\alpha (1\vee \hat{k}_\alpha) }\right)\right),\label{BHtopkWellner}
\end{align}
where $\kappa=\pi^2/6$, $\hat{k}_\alpha$ denotes the number of rejections of the BH procedure \eqref{equBH} at level $\alpha$, 
and where the KR bound requires in addition $\delta\leq 0.31$.
 Moreover, these bounds are also valid uniformly in $\alpha\in (0,1)$, in the sense that
 $$
 \P(\forall \alpha\in (0,1), \FDP(R_{\hat{k}_\alpha})\leq \ol{\FDP}_\alpha^{{\tiny \mbox{Method}}})\geq 1-\delta, \:\:\mbox{Method}\in \{{ \mbox{Simes}},{ \mbox{DKW}},{ \mbox{KR}},{ \mbox{Well}}\},
 $$
 and thus also when using a post hoc choice $\alpha=\hat{\alpha}$ of the level.
\end{corollary}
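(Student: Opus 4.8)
The plan is to specialize the general path envelopes of Section~2 (in particular Theorem~\ref{th-BH}, and equations \eqref{topkSimes}--\eqref{topkKR}) to the random index $k=\hat{k}_\alpha$ defined by the BH procedure \eqref{equBH}, and to exploit the defining property of $\hat{k}_\alpha$ to replace the data-dependent quantity $m p_{(\hat{k}_\alpha)}/\hat{k}_\alpha$ by the explicit level $\alpha$. Concretely, the key observation is that, by definition of $\hat{k}_\alpha$ in \eqref{equBH}, we have $\wh{\FDP}_{\hat{k}_\alpha}=m p_{(\hat{k}_\alpha)}/\hat{k}_\alpha\leq \alpha$ whenever $\hat{k}_\alpha\geq 1$; and when $\hat{k}_\alpha=0$ the set $R_{\hat{k}_\alpha}=R_0=\emptyset$ has $\FDP=0$ so every bound trivially holds (indeed the stated bounds are nonnegative). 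So on the event that the uniform envelope of Theorem~\ref{th-BH} (resp.\ \eqref{topkSimes}, \eqref{topkDKW}, \eqref{topkKR}) holds, we may evaluate it at $k=\hat{k}_\alpha$ and then bound $m p_{(\hat{k}_\alpha)}/\hat{k}_\alpha$ from above by $\alpha$.

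The steps, in order, would be: (i) Recall that each of $\ol{\FDP}^{\mathrm{Simes}}_k$, $\ol{\FDP}^{\mathrm{DKW}}_k$, $\ol{\FDP}^{\mathrm{KR}}_k$, $\ol{\FDP}^{\mathrm{Well}}_k$ is a $(1-\delta)$-confidence envelope for the top-$k$ path in the sense of \eqref{confenvelop}, hence (as noted in \eqref{confenvelopkalpha}) simultaneously over all $\alpha$ one has $\FDP(R_{\hat{k}_\alpha})\leq \ol{\FDP}^{\mathrm{Meth}}_{\hat{k}_\alpha}$ with probability at least $1-\delta$. This already yields the final uniform-in-$\alpha$ claim, provided we identify $\ol{\FDP}^{\mathrm{Meth}}_{\hat{k}_\alpha}$ with the displayed closed-form $\ol{\FDP}^{\mathrm{Meth}}_\alpha$. (ii) For Simes: $\ol{\FDP}^{\mathrm{Simes}}_{\hat{k}_\alpha}=1\wedge (m p_{(\hat{k}_\alpha)}/(\hat{k}_\alpha\delta))\leq 1\wedge(\alpha/\delta)$ using $m p_{(\hat{k}_\alpha)}/\hat{k}_\alpha\leq\alpha$; note $1\wedge(\alpha/\delta)$ is a valid upper bound, and it is constant, which is exactly \eqref{BHtopkSimes}. (iii) For DKW: write $\ol{\FDP}^{\mathrm{DKW}}_{\hat{k}_\alpha}=1\wedge(m p_{(\hat{k}_\alpha)}/\hat{k}_\alpha + m^{1/2}\sqrt{0.5\log(1/\delta)}/\hat{k}_\alpha)\leq 1\wedge(\alpha + m^{1/2}\sqrt{0.5\log(1/\delta)}/(1\vee\hat{k}_\alpha))$, which is \eqref{BHtopkDKW}; the $1\vee\hat{k}_\alpha$ handles the degenerate case $\hat{k}_\alpha=0$. (iv) KR is analogous, using monotonicity of the linear map $x\mapsto \frac{\log(1/\delta)}{\log(1+\log(1/\delta))}(x+1/\hat{k}_\alpha)$ and $m p_{(\hat{k}_\alpha)}/\hat{k}_\alpha\leq\alpha$, giving \eqref{BHtopkKR} with $1\vee\hat{k}_\alpha$ for the empty case. (v) For Wellner: here both the prefactor $m p_{(\hat{k}_\alpha)}/\hat{k}_\alpha$ and the argument of $h^{-1}$ depend on $p_{(\hat{k}_\alpha)}$; one substitutes $p_{(\hat{k}_\alpha)}\leq \alpha\hat{k}_\alpha/m$, uses that $h^{-1}$ is nondecreasing (Lemma~\ref{lem:h}), that $t\mapsto$ the argument $\bigl(2\log(\kappa/\delta)+4\log(1+\log_2(1/t))\bigr)/(mt)$ is nonincreasing in $t$ on $(0,1)$, and that $x\mapsto x\,h^{-1}(c/x)$ is nondecreasing, to replace $p_{(\hat{k}_\alpha)}$ by $\alpha\hat{k}_\alpha/m$ throughout, obtaining \eqref{BHtopkWellner} after simplifying $m p_{(\hat{k}_\alpha)}/\hat{k}_\alpha\cdot(\cdots)$ into $\alpha\,h^{-1}(\cdots)$ and $1/p_{(\hat{k}_\alpha)}$ into $m/(\alpha\hat{k}_\alpha)$; again insert $1\vee\hat{k}_\alpha$ for the empty set. (vi) Finally, since each displayed $\ol{\FDP}^{\mathrm{Meth}}_\alpha$ dominates $\ol{\FDP}^{\mathrm{Meth}}_{\hat{k}_\alpha}$ pointwise on the relevant event, and the envelope property \eqref{confenvelopkalpha} gives $\FDP(R_{\hat{k}_\alpha})\leq\ol{\FDP}^{\mathrm{Meth}}_{\hat{k}_\alpha}$ for all $\alpha$ simultaneously with probability $\geq 1-\delta$, we conclude the uniform-in-$\alpha$ statement, which in particular licenses a data-driven $\alpha=\hat\alpha$.

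The main obstacle, such as it is, is the Wellner case (step v): one must verify carefully that the chain of monotonicity facts (monotonicity of $h^{-1}$, monotonicity of the $h^{-1}$-argument in $t$, and monotonicity of $x\mapsto x\,h^{-1}(c/x)$) all point the right way so that replacing $p_{(\hat{k}_\alpha)}$ by its upper bound $\alpha\hat{k}_\alpha/m$ yields an \emph{upper} bound on $\ol{\FDP}^{\mathrm{Well}}_{\hat{k}_\alpha}$ — in particular that increasing $t$ does not increase the overall expression because although the prefactor $t$ grows, the argument of $h^{-1}$ shrinks and dominates in the right direction; these are exactly the properties collected in Lemma~\ref{lem:h}, which we invoke. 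Everything else is bookkeeping: handling $\hat{k}_\alpha=0$ via $1\vee\hat{k}_\alpha$ and the trivial $\FDP(\emptyset)=0$, and noting that the $1\wedge(\cdot)$ truncation is preserved under the substitutions. No new probabilistic content is needed beyond Theorem~\ref{th-BH} and the three classical inequalities already stated; the corollary is purely a deterministic post-processing of those envelopes at the BH stopping index.
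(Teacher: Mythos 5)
Your treatment of the Simes, DKW and KR bounds is fine: those envelopes are manifestly nondecreasing in $p_{(\hat k_\alpha)}$, so substituting the upper bound $p_{(\hat k_\alpha)}\le \alpha\hat k_\alpha/m$ (valid by definition of $\hat k_\alpha$ when $\hat k_\alpha\ge 1$, with the empty case handled separately) is legitimate, and the uniformity in $\alpha$ is inherited from the uniformity in $k$ of the envelopes exactly as you say. The gap is in step (v), the Wellner case. There you need the map $t\mapsto \frac{mt}{k}\,h^{-1}\bigl(C_t/(mt)\bigr)$, with $C_t=2\log(\kappa/\delta)+4\log\left(1+\log_2(1/t)\right)$, to be nondecreasing in $t$, so that evaluating it at $t_1=\alpha\hat k_\alpha/m\ge t_0=p_{(\hat k_\alpha)}$ gives an upper bound. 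This does not follow from the facts you cite. Lemma~\ref{lem:h} gives that $x\mapsto x\,h^{-1}(c/x)$ is increasing for \emph{fixed} $c$; here the ``constant'' $C_t$ itself decreases as $t$ increases, which pushes $h^{-1}$ \emph{down}. Concretely, the two available monotonicity facts show that both $\frac{mt_0}{k}h^{-1}(C_{t_0}/(mt_0))$ and $\frac{mt_1}{k}h^{-1}(C_{t_1}/(mt_1))$ are bounded above by $\frac{mt_1}{k}h^{-1}(C_{t_0}/(mt_1))$, but they give no comparison between the two quantities themselves. Your own phrasing betrays the problem: you say increasing $t$ ``does not increase the overall expression'' because the argument of $h^{-1}$ ``shrinks and dominates'' --- but if that were the dominant effect, replacing $t_0$ by the larger $t_1$ would produce a \emph{lower} value, i.e.\ the substitution would go the wrong way.

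The fix is the route the paper actually takes, which bypasses the envelope formula entirely: since $p_{(\hat k_\alpha)}\le \alpha(1\vee\hat k_\alpha)/m=:t_1$, the number of nulls in $R_{\hat k_\alpha}$ satisfies $\sum_{i\in\cH_0}\ind{p_i\le p_{(\hat k_\alpha)}}\le \sum_{i\in\cH_0}\ind{p_i\le t_1}$, and one applies the uniform inequality \eqref{WellnerUniformSimple} with $(U_1,\dots,U_n)=(p_i,i\in\cH_0)$, $n=m_0$, directly at the (random) point $t=t_1$ --- legitimate because that inequality holds simultaneously for all $t\in(0,1)$ on a single event of probability at least $1-\delta$. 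Dividing by $1\vee\hat k_\alpha$ and using $m_0\le m$ together with the monotonicity of $x\mapsto x\,h^{-1}(c/x)$ (now with $c=C_{t_1}$ genuinely fixed) yields \eqref{BHtopkWellner}. This is a one-line repair, but it is a different argument from ``evaluate $\ol{\FDP}^{\mathrm{Well}}_{\hat k_\alpha}$ and substitute,'' and the latter, as written, is not justified.
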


\begin{proof}
For \eqref{BHtopkWellner}, we use \eqref{WellnerUniformSimple} for $(U_1,\dots,U_n)=(p_i,i\in \cH_0)$, $n=m_0$, and $t=\alpha (1\vee \hat{k}_\alpha)/m$.
\end{proof}

Let us now consider the $m$-consistency property \eqref{equ-consistency}, by using BH as reference procedure.
Among the four above bounds, it is apparent that Simes and KR are not BH $m$-consistent, because of the constant in front of $\alpha$; namely, for all $m$,
$$
\ol{\FDP}^{{\tiny \mbox{Simes}}}_\alpha \wedge \ol{\FDP}^{{\tiny \mbox{KR}}}_\alpha\geq (1\wedge (c \alpha))\wedge\ind{\hat{k}_\alpha\geq 1},
$$
for some constant $c>1$, which implies the BH $m$-inconsistency for a sequence of model such that $\hat{k}_\alpha\geq 1$ with an asymptotically non-null probability. By contrast, $\ol{\FDP}_\alpha^{{\tiny \mbox{DKW}}}$  and $\ol{\FDP}_\alpha^{{\tiny \mbox{Well}}} $ are BH $m$-consistent in the following sense.

{
\begin{proposition}\label{prop-consistencytopk}
Let us consider any model sequence $\P^{(m)}$ in the top-$k$ setting and denote by $\hat{k}_{\alpha}$ the number of nulls rejected by the BH procedure at level $\alpha$. Then the following envelopes are 
BH $m$-consistent in the sense of \eqref{equ-consistency}: 
\begin{itemize}
\item $\ol{\FDP}_\alpha^{{\tiny \mbox{DKW}}}$ if $ m^{1/2}/\hat{k}_{\alpha}=o_P(1)$;
\item $\ol{\FDP}_\alpha^{{\tiny \mbox{Well}}} $ if $(\log\log m)/ \hat{k}_{\alpha}=o_P(1)$.
\end{itemize} 
\end{proposition}
The latter result means that the BH procedure at level $\alpha$ should make enough rejections in order to provide $m$-consistency. In the two-group model of  \citep{ETST2001} with a fixed proportion of alternatives in $(0,1)$ (that is, a ``dense'' case), under some assumptions on the alternative distribution, and assuming $\alpha$ above a critical value, \cite{Chi2007} showed that $\hat{k}_{\alpha}$ is asymptotically of the order of $m$ and thus the DKW and Wellner bounds are both $m$-consistent. 
Another exemple, including sparse situations, is considered in the next section.
}

\subsection{{BH $m$-consistency in a prototypical model}\label{sec:powertopk}}

{
\begin{definition}
The sparse one-sided Gaussian location model of parameter $m,b,c,\beta$, denoted as $\P^{(m)}_{b,c,\beta}$, is defined as follows: $p_i=\ol{\Phi}(X_i)$, $1\leq i\leq m$, the  $X_i$'s are independent, with $X_i\sim \mathcal{N}(0,1)$ for $i\in \cH_0$ and  $X_i\sim \mathcal{N}(\mu_m,1)$ otherwise, for $\mu_m=\sqrt{2\beta\log m}+b$, $b>0$, and $m_1=|\cH_1|=\lfloor cm^{1-\beta}\rfloor$, $c\in (0,1)$, $\beta\in [0,1)$. 
\end{definition}
}
{
Note that $\beta=0$ is the dense case for which the alternative mean $\mu_m=b>0$ is a fixed quantity, which means that the individual tests do not have full power\footnote{This setting is in sharp contrast with \citet[Section 7]{goeman2019simultaneous} which assumed asymptotical full power for the individual tests and the dense case, see also Remark~\ref{rem:goeman}.}, even asymptotically w.r.t. $m$. By contrast, $\beta>0$ corresponds to the sparse case, for which $\mu_m=\sqrt{2\beta\log m}+b$ tends to infinity. In both cases, the magnitude of alternative means is defined to be on the `edge of detectability' where the BH procedure has some non-trivial power, see \cite{bogdan2011asymptotic,neuvial2012false,abraham2021sharp} for instance.
}

{
\begin{theorem}\label{th:BHasymp}
Let $\alpha\in (0,1)$. In the above one-sided Gaussian location model $\P^{(m)}_{b,c,\beta}$, the number of rejections $\hat{k}_\alpha$ of the BH procedure is such that, as $m$ grows to infinity,
\begin{equation}\label{mainresultlemmatopk}
\P^{(m)}_{b,c,\beta}( t^*_m\leq \alpha \hat{k}_\alpha/m \leq t^\sharp_m) \geq 1-2e^{-d m_1 } , \mbox{ for } m_1/m  \lesssim t^*_m \leq t^\sharp_m \lesssim m_1/m,
\end{equation}
for some constant $d>0$ (depending on $\alpha$, $\beta$, $b$), 
where $t^*_m\in (0,1)$ is the unique solution of $G_m(t)=2t/\alpha$, 
$t^\sharp_m\in (0,1)$ is the unique solution of $G_m(t)=0.5 t/\alpha$,
 and where $G_m(t)=(m_0/m) t + (m_1/m) \ol{\Phi}(\ol{\Phi}^{-1}(t)-\mu_m)$, with $\ol{\Phi}(z)=\P(Z\geq z)$, $z\in \R$.
\end{theorem}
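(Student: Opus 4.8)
The plan is to control $\hat{k}_\alpha$ by sandwiching the empirical FDP estimate $\wh{\FDP}_k = mp_{(k)}/k$ between deterministic curves via concentration of the empirical cdf, and then read off the number of rejections from where those curves cross the line $t\mapsto 2t/\alpha$ (resp. $0.5 t/\alpha$). The starting observation is that $\hat{k}_\alpha = \max\{k : mp_{(k)}/k \le \alpha\}$, which (after the usual step-up manipulation) can be rewritten in terms of the empirical distribution function $\wh{G}_m(t) = m^{-1}\sum_{i=1}^m \ind{p_i \le t}$: one has $\hat{k}_\alpha/m = \wh{G}_m(\hat{t})$ where $\hat{t}$ is (essentially) the largest $t$ with $\wh{G}_m(t) \ge t/\alpha$. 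So the whole problem reduces to locating the last crossing point of $\wh{G}_m$ with the line $t/\alpha$.

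**Next I would** bring in concentration. Under $\P^{(m)}_{b,c,\beta}$ the mean function is $G_m(t) = \E[\wh{G}_m(t)] = (m_0/m)t + (m_1/m)\ol{\Phi}(\ol{\Phi}^{-1}(t)-\mu_m)$. The two thresholds $t^*_m$ and $t^\sharp_m$ are defined as the solutions of $G_m(t) = 2t/\alpha$ and $G_m(t) = 0.5\, t/\alpha$ respectively; one should first check these are well-defined (unique), which follows from the strict concavity/monotonicity properties of $t \mapsto G_m(t)\alpha/t$ near the relevant scale, plus the boundary behavior $G_m(t)/t \to \infty$ as $t\to 0$ and $G_m(1)=1$. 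The scaling claim $m_1/m \lesssim t^*_m \le t^\sharp_m \lesssim m_1/m$ comes from plugging $t \asymp m_1/m$ into $G_m$: the null part contributes $\asymp (m_1/m)$ and the alternative part $\asymp (m_1/m)\ol\Phi(\ol\Phi^{-1}(t) - \mu_m)$, and with $\mu_m = \sqrt{2\beta\log m}+b$ and $t \asymp m^{-\beta}$ one has $\ol\Phi^{-1}(t) \sim \sqrt{2\beta\log m}$, so $\ol\Phi^{-1}(t)-\mu_m \to -b$, giving $\ol\Phi(-b)\in(0,1)$ a constant — hence $G_m(t) \asymp m_1/m$ on that scale, and matching against $2t/\alpha$ resp. $0.5t/\alpha$ pins $t^*_m, t^\sharp_m$ to the same order. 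This is the part that requires care with the Gaussian tail asymptotics (Mills ratio) but is otherwise routine.

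**Then** the deviation bound: on $[t^*_m, t^\sharp_m]$ (a window of size $\asymp m_1/m$), I want $|\wh{G}_m(t) - G_m(t)|$ to be small relative to $G_m(t) \asymp m_1/m$. A multiplicative Chernoff/Bernstein bound for the Binomial $m\wh{G}_m(t) \sim \Bin(m, G_m(t))$ gives $\P(|\wh{G}_m(t) - G_m(t)| > \tfrac14 G_m(t)) \le 2e^{-c\, m G_m(t)} \le 2 e^{-c' m_1}$ for fixed $t$; to make it uniform over the relevant $t$-window one either discretizes (the window is an interval and $\wh G_m$ is monotone, so a net of polynomial size suffices and the $e^{-c'm_1}$ rate absorbs the union bound), or invokes the DKW/Wellner-type bound already available in the paper. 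With $\wh G_m$ trapped within a factor $(1\pm 1/4)$ of $G_m$ on the window, the last crossing of $\wh G_m(t)$ with $t/\alpha$ must lie between the crossing of $\tfrac34 G_m$ with $t/\alpha$ (which, since $\tfrac34 G_m(t^*_m) = \tfrac34 \cdot 2 t^*_m/\alpha = \tfrac32 t^*_m/\alpha > t^*_m/\alpha$, occurs at $t \ge t^*_m$) and the crossing of $\tfrac54 G_m$ with $t/\alpha$ (which, since $\tfrac54 G_m(t^\sharp_m) = \tfrac54 \cdot \tfrac12 t^\sharp_m/\alpha = \tfrac58 t^\sharp_m/\alpha < t^\sharp_m/\alpha$, occurs at $t \le t^\sharp_m$). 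Translating back, $\hat t \in [t^*_m, t^\sharp_m]$, and then $\alpha\hat{k}_\alpha/m = \alpha \wh G_m(\hat t) \in [\alpha\,\tfrac34 G_m(t^*_m),\, \alpha\,\tfrac54 G_m(t^\sharp_m)]$, which one checks is itself in $[t^*_m, t^\sharp_m]$ after possibly enlarging the constants in the definition of $d$ — giving \eqref{mainresultlemmatopk}.

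**The main obstacle** I expect is not any single estimate but the bookkeeping that makes the crossing-point argument rigorous: converting the discrete step-up rule $\hat k_\alpha = \max\{k : mp_{(k)} \le \alpha k\}$ exactly into a statement about the continuous curve $\wh G_m$ (handling ties, the $\vee 1$ convention, and the off-by-one between $p_{(k)}$ and the value of $\wh G_m$ just below it), and checking that the monotone-trapping of $\wh G_m$ between $\tfrac34 G_m$ and $\tfrac54 G_m$ genuinely forces the \emph{last} crossing into the target window rather than merely some crossing. The Gaussian asymptotics pinning $t^*_m \asymp t^\sharp_m \asymp m_1/m$ are the other place where one must be careful, but those are standard Mills-ratio computations once the window scale is guessed correctly.
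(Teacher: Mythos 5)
Your plan follows the paper's proof essentially step for step: define $t^*_m,t^\sharp_m$ as the crossings of $G_m$ with $2t/\alpha$ and $0.5\,t/\alpha$, pin both to the scale $m_1/m$ via the Mills-ratio computation showing $\ol{\Phi}^{-1}(t)-\mu_m\to -b$ at that scale (so $F_m(t)\to\ol{\Phi}(-b)>0$), and then use Bernstein/Chernoff concentration of $\hat{G}_m$ around $G_m$ together with the characterization $\alpha\hat{k}_\alpha/m=\max\{t:\hat{G}_m(t)\geq t/\alpha\}$. The one place where your write-up is not yet sufficient is exactly the point you flag as the main obstacle: controlling $|\hat{G}_m-G_m|$ only on the window $[t^*_m,t^\sharp_m]$ cannot exclude a late crossing of $\hat{G}_m$ with $t/\alpha$ at some $t>t^\sharp_m$, so it bounds \emph{a} crossing but not the \emph{last} one, and hence does not yield the upper bound on $\hat{k}_\alpha$. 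The fix — and what the paper does — is to run the union bound over all grid points $\alpha k/m>t^\sharp_m$ up to $1$: by monotonicity of $t\mapsto G_m(t)/t$ one has $k/m-G_m(\alpha k/m)\geq G_m(\alpha k/m)\geq G_m(t^\sharp_m)=0.5\,t^\sharp_m/\alpha\gtrsim m_1/m$ there, so each Bernstein term is at most $e^{-Cmt^\sharp_m}$ and the factor $m$ from the union bound is absorbed because $m_1=cm^{1-\beta}\gg\log m$. With that extension (and your single-point bound at $t^*_m$ for the lower direction, which is all that is needed there), your argument coincides with the paper's.
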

Theorem~\ref{th:BHasymp} is proved in Section~\ref{sec:proof:th:BHasymp}. It implies $\hat{k}_{\alpha}\asymp_{\P^{(m)}_{b,c,\beta}} m^{1-\beta}$, which leads to the following result.
\begin{proposition}\label{cor-consistencytopk}
Let us consider the sequence of sparse one-sided Gaussian location models $(\P^{(m)}_{b,c,\beta},m\geq 1)$ with fixed parameters  $b\in\R$, $c\in(0,1)$ and a sparsity parameter $\beta\in [0,1)$, as defined above. Then we have for all $\alpha\in (0,1)$,
\begin{align*}
\ol{\FDP}_\alpha^{{\tiny \mbox{DKW}}}-\alpha& \asymp_{\P^{(m)}_{b,c,\beta}} m^{-1/2+\beta};\\
\ol{\FDP}_\alpha^{{\tiny \mbox{Well}}}-\alpha &\asymp_{\P^{(m)}_{b,c,\beta}} 
\sqrt{\log\log(m)} \:m^{-1/2+\beta/2},
\end{align*}
where $u_m\asymp_P v_m$ stands for $u_m=O_P(v_m)$ and $v_m=O_P(u_m)$.
In particular, concerning the BH $m$-consistency \eqref{equ-consistency}  for the model sequence $(\P^{(m)}_{b,c,\beta},m\geq 1)$:
\begin{itemize}
\item the DKW bound \eqref{topkDKW},\eqref{BHtopkDKW} is $m$-consistent when $\beta<1/2$ but fails to be for $\beta\geq 1/2$; 
\item the Wellner bound \eqref{topkWellner},\eqref{BHtopkWellner} is $m$-consistent for any arbitrary $\beta\in (0,1)$.
\end{itemize}
\end{proposition}
}

{
Corollary~\ref{cor-consistencytopk} shows the superiority of the Wellner bound on the DKW bound for achieving the $m$-consistency property on a particular sparse sequence models: while the DKW bound needs a model dense enough ($\beta<1/2$), the Wellner bound covers the whole sparsity range $\beta\in(0,1)$.
}

\subsection{Adaptive envelopes}\label{sec:adaptive}

Let us consider the following upper-bounds for $m_0$:
\begin{align}
\hat{m}_0^{{\tiny \mbox{Simes}}} &:= m\wedge \inf_{t\in (0,\delta)}  \frac{V_t}{1-t/\delta}\label{m0chapSimes};\\
\hat{m}_0^{{\tiny \mbox{DKW}}} &:= m\wedge \inf_{t\in (0,1)}  \left( \frac{C^{1/2}}{2(1-t)} + \sqrt{\frac{C}{4(1-t)^2}+\frac{V_t}{1-t}}\right)^2\label{m0chapDKW};\\
\hat{m}_0^{{\tiny \mbox{KR}}} &:= m\wedge \inf_{t\in (0,1/C')}  \frac{C'+V_t}{1-C' t}\label{m0chapKR};\\
\hat{m}_0^{{\tiny \mbox{Well}}} &:= m\wedge \inf_{t\in (0,1)}  \left( \sqrt{\frac{tC_t}{2(1-t)^2}} + \sqrt{\frac{C_t}{{2(1-t)^2}}+\frac{V_t}{1-t}}\right)^2\label{m0chapWellnerU}, 
\end{align}
where $V_t=\sum_{i=1}^m \ind{p_i> t}$, $C=\log(1/\delta)/2$, $C'=\frac{\log(1/\delta)}{\log(1+\log(1/\delta))}$, $C_t=2\log(\kappa/\delta) + 4 \log\left(1+  \log_2(1/t)\right)$, $\kappa=\pi^2/6$. 
Since $V_t/(1-t)$ corresponds to the so-called Storey estimator \cite{storey2002direct}, these four estimators can all be seen as Storey-type confidence bounds, each including a specific deviation term that takes into account the probability error $\delta$. Note that $\hat{m}_0^{{\tiny \mbox{DKW}}}$ was already proposed in  \cite{durand2020post}.


\begin{proposition}\label{prop:adapttopk}
In the top-$k$ setting  of Section~\ref{sec:topksetting}, the envelopes defined by \eqref{topkSimes}, \eqref{topkDKW}, \eqref{topkKR} and \eqref{topkWellner} with $m$ replaced by the corresponding bound $\hat{m}_0^{{\tiny \mbox{Simes}}}$ \eqref{m0chapSimes}, $\hat{m}_0^{{\tiny \mbox{DKW}}}$ \eqref{m0chapDKW}, $\hat{m}_0^{{\tiny \mbox{KR}}}$ \eqref{m0chapKR} or $\hat{m}_0^{{\tiny \mbox{Well}}}$ \eqref{m0chapWellnerU}, respectively, are also $(1-\delta)$-confidence envelopes in the sense of \eqref{confenvelop} for the top-$k$ path.
\end{proposition}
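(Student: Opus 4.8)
The plan is to re-run, one method at a time, exactly the argument that shows \eqref{topkSimes}--\eqref{topkWellner} are envelopes, while observing that the \emph{same} high-probability event both controls $\FDP(R_k)$ along the path and bounds the unknown $m_0$; consequently no union bound is needed and the level $1-\delta$ is untouched. For $\mathrm{M}\in\{\mathrm{Simes},\mathrm{DKW},\mathrm{KR},\mathrm{Well}\}$, write $\Phi_{\mathrm{M}}(n,t)$ for the deviation bound on $\sum_{i=1}^n\ind{U_i\le t}$ underpinning the corresponding envelope, i.e. $\Phi_{\mathrm{Simes}}(n,t)=nt/\delta$, $\Phi_{\mathrm{DKW}}(n,t)=nt+\sqrt{n\log(1/\delta)/2}$, $\Phi_{\mathrm{KR}}(n,t)=C'(1+nt)$ with $C'=\tfrac{\log(1/\delta)}{\log(1+\log(1/\delta))}$, and $\Phi_{\mathrm{Well}}(n,t)=nt\,h^{-1}\!\big(C_t/(nt)\big)$ with $C_t=2\log(\kappa/\delta)+4\log(1+\log_2(1/t))$, $\kappa=\pi^2/6$. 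Then $\Phi_{\mathrm{M}}(m,t)/k$ is, up to the harmless truncation at $1$, the $k$-th coordinate of \eqref{topkSimes}--\eqref{topkWellner}, so replacing $m$ by $\hat{m}_0^{\mathrm{M}}$ there is the same as replacing $\Phi_{\mathrm{M}}(m,\cdot)$ by $\Phi_{\mathrm{M}}(\hat{m}_0^{\mathrm{M}},\cdot)$. Applying the relevant inequality (the Simes inequality; the DKW inequality \citep{Mass1990}; the KR inequality \citep{katsevich2020simultaneous}; or Proposition~\ref{cor:wellner}) to the i.i.d.\ uniform variables $(p_i)_{i\in\cH_0}$ with $n=m_0$ yields an event $\Omega^\star$ with $\P(\Omega^\star)\ge1-\delta$ on which
\[
\forall t\in(0,1),\qquad \sum_{i\in\cH_0}\ind{p_i\le t}\ \le\ \Phi_{\mathrm{M}}(m_0,t).
\]

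The first step is to show that $\Omega^\star$ forces $m_0\le\hat{m}_0^{\mathrm{M}}$. On $\Omega^\star$, for every $t$,
\[
m_0-\sum_{i\in\cH_0}\ind{p_i\le t}=\sum_{i\in\cH_0}\ind{p_i>t}\le\sum_{i=1}^m\ind{p_i>t}=V_t,
\]
hence $m_0\le V_t+\Phi_{\mathrm{M}}(m_0,t)$. Restricting to the range of $t$ where this is informative --- namely the range displayed in \eqref{m0chapSimes}--\eqref{m0chapWellnerU} --- solving for $m_0$, and then taking the infimum over $t$ together with the trivial bound $m_0\le m$, produces exactly $\hat{m}_0^{\mathrm{M}}$ as in \eqref{m0chapSimes}--\eqref{m0chapWellnerU}: the relation $m_0\le V_t+\Phi_{\mathrm{M}}(m_0,t)$ is linear in $m_0$ for Simes and KR and a quadratic in $\sqrt{m_0}$ for DKW and for Wellner (for the latter once $h^{-1}$ has been controlled through Lemma~\ref{lem:h}), and $\hat{m}_0^{\mathrm{M}}$ is its larger root, capped at $m$.

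The second step transfers this through monotonicity. Each map $n\mapsto\Phi_{\mathrm{M}}(n,t)$ is nondecreasing on $(0,\infty)$: this is immediate for Simes, DKW and KR, while for $\Phi_{\mathrm{Well}}$ it is the monotonicity of $s\mapsto s\,h^{-1}(C_t/s)$ recorded in Lemma~\ref{lem:h}, i.e. the very property already used in the proof of Theorem~\ref{th-BH} to pass from $m_0$ to $m$. Therefore, on $\Omega^\star$, for every $k\in\{1,\dots,m\}$, taking $t=p_{(k)}$,
\[
\FDP(R_k)=\frac{\sum_{i\in\cH_0}\ind{p_i\le p_{(k)}}}{k}\ \le\ \frac{\Phi_{\mathrm{M}}(m_0,p_{(k)})}{k}\ \le\ \frac{\Phi_{\mathrm{M}}(\hat{m}_0^{\mathrm{M}},p_{(k)})}{k},
\]
the last inequality using $m_0\le\hat{m}_0^{\mathrm{M}}\le m$ from the first step. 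Since the rightmost term is precisely the coordinate of the adaptive envelope, the event $\{\forall k\ge1,\ \FDP(R_k)\le\ol{\FDP}_k\}$ contains $\Omega^\star$, so it has probability at least $1-\delta$, which is \eqref{confenvelop}.

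I expect the only non-routine point to be the Wellner instance of the first step: rewriting the implicit bound $m_0\le V_t+m_0t\,h^{-1}\!\big(C_t/(m_0t)\big)$ as the explicit \eqref{m0chapWellnerU}. This is exactly where the quantitative estimates on $h^{-1}$ from Lemma~\ref{lem:h} are needed --- they let $m_0t\,h^{-1}(C_t/(m_0t))$ be dominated by a term affine in $m_0$ plus a multiple of $\sqrt{m_0}$, reducing the relation to a genuine quadratic in $\sqrt{m_0}$ whose larger root is bounded by the bracketed quantity in \eqref{m0chapWellnerU}. The analogous computations for Simes, DKW and KR are elementary (the DKW one already appearing in \citealp{durand2020post}).
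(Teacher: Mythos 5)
Your proposal is correct and follows essentially the same route as the paper: apply the underlying uniform deviation inequality to the null $p$-values with $n=m_0$, observe that on that single $1-\delta$ event one has both the non-adaptive envelope and (after rearranging $m_0\le V_t+\Phi_{\mathrm{M}}(m_0,t)$, which is linear in $m_0$ for Simes/KR and quadratic in $\sqrt{m_0}$ for DKW/Wellner via the bound $h^{-1}(y)\le(1+\sqrt{y/2})^2$ of Lemma~\ref{lem:h}) the inequality $m_0\le\hat{m}_0^{\mathrm{M}}$, and conclude by monotonicity of $n\mapsto\Phi_{\mathrm{M}}(n,t)$. The only cosmetic difference is that you treat all four methods uniformly, whereas the paper spells out only the Wellner case and declares the others analogous.
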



We can easily check that these four adaptive envelopes all uniformly improve their own non-adaptive counterpart.
The proof of Proposition~\ref{prop:adapttopk} is provided in Section~\ref{proofprop:adapttopk}.

\begin{remark}
In practice, the bounds $\hat{m}_0^{{\tiny \mbox{Simes}}}$ \eqref{m0chapSimes}, $\hat{m}_0^{{\tiny \mbox{DKW}}}$ \eqref{m0chapDKW}, $\hat{m}_0^{{\tiny \mbox{KR}}}$ \eqref{m0chapKR} or $\hat{m}_0^{{\tiny \mbox{Well}}}$ \eqref{m0chapWellnerU} can be computed by taking an infimum over $t=p_{(k)}$, $1\leq k\leq m$ and by replacing $V_t$ by $m-k$.
\end{remark}

Applying Proposition~\ref{prop:adapttopk} for the BH procedure, this gives rise to the following adaptive confidence bounds.

\begin{corollary}\label{cor-BH-adapt}
In the top-$k$ setting of Section~\ref{sec:topksetting}, for any $\alpha,\delta\in (0,1)$, the following quantities are $(1-\delta)$-confidence bounds for the FDP of the BH procedure at level $\alpha$:
\begin{align}
\ol{\FDP}^{{\tiny \mbox{Simes-adapt}}}_\alpha &:=1\wedge \alpha(\hat{m}_0^{{\tiny \mbox{Simes}}}/{m})/\delta; \label{BHtopkSimes-adapt}\\
\ol{\FDP}^{{\tiny \mbox{DKW-adapt}}}_\alpha &:=  1\wedge \Big(\alpha(\hat{m}_0^{{\tiny \mbox{DKW}}}/{m}) + \frac{(\hat{m}_0^{{\tiny \mbox{DKW}}})^{1/2}\sqrt{0.5 \log 1/\delta}}{1\vee \hat{k}_\alpha} \Big); \label{BHtopkDKW-adapt}\\
\ol{\FDP}^{{\tiny \mbox{KR-adapt}}}_\alpha &:=  1\wedge \left(\frac{\log(1/\delta)}{\log(1+\log(1/\delta))} \left( \alpha(\hat{m}_0^{{\tiny \mbox{KR}}}/{m}) + 1/(1\vee \hat{k}_\alpha)\right)\right) ;
\label{BHtopkKR-adapt}\\
\ol{\FDP}_\alpha^{{\tiny \mbox{Well-adapt}}} &:=    1\wedge \left(  \alpha (\hat{m}_0^{{\tiny \mbox{Well}}}/{m})\: h^{-1}\left(\frac{2\log(\kappa/\delta) + 4 \log\left(1+  \log_2\Big(\frac{m}{\alpha(1\vee \hat{k}_\alpha)}\Big) \right)}{\alpha (1\vee \hat{k}_\alpha) \hat{m}_0^{{\tiny \mbox{Well}}}/{m} }\right)\right),\label{BHtopkWellner-adapt}
\end{align}
where $\kappa=\pi^2/6$, $\hat{k}_\alpha$ denotes the number of rejections of BH procedure \eqref{equBH} at level $\alpha$, 
and where the KR-adapt bound requires in addition $\delta\leq 0.31$.
 Moreover, these bounds are also valid uniformly in $\alpha\in (0,1)$ and thus also when using a post hoc choice $\alpha=\hat{\alpha}$ of the level.
\end{corollary}

\begin{proof}
For \eqref{BHtopkWellner-adapt}, we use \eqref{WellnerUniformSimple} for $(U_1,\dots,U_n)=(p_i,i\in \cH_0)$, $n=m_0$, $t=\alpha (1\vee \hat{k}_\alpha)/m$, and the fact that $m_0\leq \hat{m}_0^{{\tiny \mbox{Well}}}$ on the considered event by the proof in Section~\ref{proofprop:adapttopk}. The other bounds are proved similarly. 
\end{proof}


\subsection{Interpolated bounds}\label{sec:interptopk}

According to Remark~\ref{rem:interp}, the coverage \eqref{confenvelop} is still valid after the interpolation operation given by \eqref{equ-interp3}. As a result, the above confidence envelopes can be improved as follows:
\begin{align}
\wt{\FDP}^{{\tiny \mbox{Simes}}}_k &:=
\min_{k'\leq k} \{k-k'+  k'\wedge (m p_{(k')}/\delta)\} /k;
 \label{topkSimesInterp}\\
\wt{\FDP}^{{\tiny \mbox{DKW}}}_k &:= 
\min_{k'\leq k} \{k-k'+ k'\wedge (m p_{(k')} + m^{1/2}\sqrt{0.5 \log 1/\delta})\} /k; \label{topkDKWInterp}\\
\wt{\FDP}^{{\tiny \mbox{KR}}}_k &:= \min_{k'\leq k} \left\{k-k'+ k'\wedge\left( \frac{\log(1/\delta)}{\log(1+\log(1/\delta))} \left( m p_{(k')}  + 1\right)\right)\right\} /k;
\label{topkKRInterp}\\
\wt{\FDP}_k^{{\tiny \mbox{Well}}} &:=  \min_{k'\leq k} \left\{k-k'+ k'\wedge\left(    m p_{(k')}\: h^{-1}\left(\frac{2\log(\kappa/\delta) + 4 \log\left(1+  \log_2(1/p_{(k')}) \right)}{m p_{(k')} }\right)\right)\right\}
/k,
\label{topkWellnerInterp}
\end{align}
respectively. When applied to BH rejection set, this also provides new confidence bounds $\wt{\FDP}^{{\tiny \mbox{Simes}}}_\alpha $, 
$\wt{\FDP}^{{\tiny \mbox{DKW}}}_\alpha$, $\wt{\FDP}^{{\tiny \mbox{KR}}}_\alpha $, $
\wt{\FDP}_\alpha^{{\tiny \mbox{Well}}}$, that can further be improved by replacing $m$ by the corresponding estimator $\hat{m}_0$.

%
%
%
%
%

\subsection{{Comparison to closed testing based on Simes local tests}}\label{sec:closed-testing}
{
Our bounds can be further improved by using a closed-testing approach \citep{goeman2011multiple} (see Lemma~6 of \citealp{GHS2021} for an explicit formula). It is legitimate to ask if the $m$-inconsistency of the Simes bound is still true with this refinement. The following result establishes that, as expected, the closed-testing version of Simes bound is still $m$-inconsistent. It uses 
%
%
%
 the top-$k$ setting of Section~\ref{sec:topksetting}, for which we added random effects for the true/false null hypotheses (two-group model of \citealp{ETST2001}).
\begin{proposition}\label{prop:inconsist}
  Consider an i.i.d. mixture model $\P^{(m)}_{\pi_0,G}$ of $m$ independent $p$-values with proportion of nulls
  $\pi_0$ and marginal CDF independent of $m$ given by
  \[
    F(t) = \pi_0 t + (1-\pi_0)G(t),
    \]
    with $G$ the cdf of an (alternative) distribution having continuous decreasing density $g$ on $[0,1]$ (so that $g(0)>1$). Then if $\alpha,\delta$ are such that
    \[
      0 < \delta < \frac{1}{\pi_0 + (1-\pi_0)g(0)} < \alpha <1,
      \]
      the Simes-based closed testing bound\footnote{See \cite{goeman2011multiple,goeman2019simultaneous} for a formal definition and useful formulations.} at level $\delta$ is not $\BH(\alpha)$-$m$-consistent
      in the model $\P^{(m)}_{\pi_0,G}$, but
 $\ol{\FDP}_\alpha^{{\tiny \mbox{DKW}}}$ and $\ol{\FDP}_\alpha^{{\tiny \mbox{Well}}}$ are.
\end{proposition}
 Since closed testing bounds are by essence more accurate than adaptive/interpolated ones,  Proposition~\ref{prop:inconsist} also shows that the versions \eqref{BHtopkSimes-adapt} and \eqref{topkSimesInterp} of the Simes bound are not $\BH$-$m$-consistent.
 Also, numerical experiments suggest that the improvement brought by closed testing is only modest when compared to the adaptive interpolated versions of our bounds (see Section~\ref{sec:closedexp}),
 which are less computationally demanding.
}

\section{Results in the pre-ordered case}

In this section, we build $m$-consistent envelopes in the case where the $p$-values are ordered a priori, which covers the famous `knockoff' case.

\subsection{Pre-ordered setting}\label{sec:knockoffsetting}

Let  $\pi:\{1,\dots,m\}\to \{1,\dots,m\}$ be some ordering of the $p$-values that is considered as given and deterministic (possibly coming from independent data). 
The pre-ordered setting is formally the same as the one of Section~\ref{sec:topksetting}, except that the $p$-value set is explored according to $\pi(1),\pi(2), \dots, \pi(m)$. The rationale behind this is that the alternative null hypotheses $\cH_1=\{1,\dots,m\}\backslash \cH_0$ are implicitly expected to be more likely to have a small rank in the ordering $\pi$
(although this condition is not needed for the future controlling results to hold).

Formally, the considered path is 
\begin{equation}\label{pathpreordered}
R_k=\{\pi( i) \::\: 1\leq i\leq k,  \:\:p_{\pi(i)}\leq  s\},\:\:\: k=1,\dots,m,
\end{equation}
for some fixed additional threshold $s\in (0,1]$ (possibly determined from independent data) and can serve to make a selection. The aim is still to find envelopes $(\ol{\FDP}_k)_k$ satisfying \eqref{confenvelop} for this path while being $m$-consistent. To set up properly the consistency, we should consider an FDR controlling procedure that is suitable in this setting. For this, we consider the Lei Fithian (LF) adaptive Selective sequential step-up procedure \citep{lei2016power}. The latter is defined by $R_{\hat{k}_\alpha}$ where 
\begin{equation}\label{kchapeauLF}
\hat{k}_\alpha = \max \left\{ k \in \{0, \dots, m \}: \wh{\FDP}_k\leq \alpha \right\}, \:\:\wh{\FDP}_k=\frac{s}{1-\lambda}\frac{ 1 + \sum_{i=1}^{k} \ind{p_{\pi(i)} >  \lambda}}{1\vee \sum_{i=1}^{k} \ind{p_{\pi(i)} \leq s}},
\end{equation}
where $\lambda\in[0,1)$ is an additional parameter.
The `knockoff' setting of \cite{barber2015controlling} can be seen as a particular case of this pre-ordered setting, where the $p$-values are independent and binary, the ordering is independent of the $p$-values and $s=\lambda=1/2$. The LF procedure reduces in that case to the classical Barber and Cand\`es (BC) procedure.

\subsection{New confidence envelopes}

The first envelope is as follows.

\begin{theorem}\label{th-preordered-Freed}
	Consider the pre-ordered setting of Section~\ref{sec:knockoffsetting} with $s\in (0,1]$. For all $\delta \in (0, 1)$, $\lambda\in [0,1)$, the following is a $(1 - \delta)$-confidence envelope for the ordered path \eqref{pathpreordered} in the sense of \eqref{confenvelop}: 
	\begin{equation} \label{preorderedFreed}
		\ol{\FDP}_k^{{\tiny \mbox{Freed}}} :=1\wedge \frac{\frac{s}{1-\lambda}\sum_{i=1}^k \ind{p_{\pi(i)} >  \lambda} + \Delta(\nu k) }{\sum_{i=1}^k \ind{p_{\pi(i)} \leq  s} }, \: k \geq 1,
	\end{equation}
	where 
	$\Delta(u)=2\sqrt{\varepsilon_u}\sqrt{{(u\vee 1)}} +  {\frac{1}{2}} \varepsilon_u$, 
	$\varepsilon_u=\log((1+\kappa)/\delta)+ 2\log\left(1+\log_2\left(u \vee 1\right) \right)$, $u>0$, $\kappa= \pi^2/6$ and $\nu=s( 1 + \min(s,\lambda)/(1-\lambda))$.
\end{theorem}

The proof of Theorem~\ref{th-preordered-Freed} is a direct consequence of a more general result (Theorem~\ref{th-mainresult}), itself being a consequence of a uniform version of Freedman's inequality (see Section~\ref{sec:freedman}). 

The second result is based on the KR envelope \citep{katsevich2020simultaneous}:
\begin{equation}\label{FDPKRknockoff}
	\ol{\FDP}^{\mbox{\tiny KR}}_{k} :=1\wedge \left(\frac{ \log(1/\delta)}{a \log(1+\frac{1 - \delta^{B/a}}{B})} \frac{a +\frac{s}{1-\lambda} \sum_{i=1}^{k} \ind{p_{\pi(i)} >  \lambda}}{  1\vee\sum_{i=1}^{k} \ind{p_{\pi(i)} \leq  s}}\right),
\end{equation}
where $a>0$ is some parameter, $B=s/(1-\lambda)$ and it is assumed $\lambda\geq s$. While the default choice in KR is $a=1$, we can  build up a new envelope by taking a union bound over $a \in \mathbb{N}\backslash\{0\}$:
\begin{theorem}\label{th-preordered-KR}
	Consider the pre-ordered setting of Section~\ref{sec:knockoffsetting} with $s\in (0,1]$. For all $\delta \in (0, 1)$ and $\lambda\in [s,1]$, the following is a $(1 - \delta)$-confidence envelope for the ordered path \eqref{pathpreordered} in the sense of \eqref{confenvelop}: 
	\begin{equation} \label{preorderedKR}
		\ol{\FDP}_k^{{\tiny \mbox{KR-U}}} := 1\wedge \min_{a\in \mathbb{N}\backslash\{0\}}\left\{ \frac{ \log(1/\delta_a)}{a \log(1+\frac{1 - \delta_a^{B/a}}{B})} \frac{a +\frac{s}{1-\lambda} \sum_{i=1}^{k} \ind{p_{\pi(i)} >  \lambda}}{  1\vee\sum_{i=1}^{k} \ind{p_{\pi(i)} \leq  s}}\right\}, \: k \geq 1,
	\end{equation}
	for $\delta_a=\delta/(\kappa a^2)$, $a\geq 1$, for $B=s/(1-\lambda)$, $\kappa=\pi^2/6$.
\end{theorem}

The envelope \eqref{preorderedKR} is less explicit than  \eqref{preorderedFreed} but has a better behavior in practice, as we will see in the numerical experiments of Section~\ref{sec:num}. 

\subsection{Confidence bounds for LF and $m$-consistency}

Recall that the LF procedure \eqref{kchapeauLF} is the reference FDR-controlling procedure in this setting. 
Applying the above envelopes for the LF procedure gives the following confidence bounds.

\begin{corollary}\label{cor-LF}
In the pre-ordered setting of Section~\ref{sec:knockoffsetting} with a selection threshold $s\in (0,1]$, for any $\alpha,\delta\in (0,1)$, $\lambda\in [s,1]$ the following quantities are $(1-\delta)$-confidence bounds for the FDP of the LF procedure with parameters $s,\lambda$ at level $\alpha$:
\begin{align}
\ol{\FDP}^{\mbox{\tiny KR}}_{\alpha} &:= 1\wedge \left(\frac{ \log(1/\delta)}{ \log(1+\frac{1 - \delta^{B}}{B})} \left(\alpha + 1/( 1\vee\hat{r}_\alpha)\right)\right) ;
\label{LFpreorderedKR}\\
\ol{\FDP}^{{\tiny \mbox{Freed}}}_\alpha &:= 1\wedge \left(\alpha +  \Delta(\nu \hat{k}_\alpha)/(1\vee\hat{r}_\alpha)\right)\label{LFpreorderedFreedU}\\
\ol{\FDP}_\alpha^{{\tiny \mbox{KR-U}}} &:=   1\wedge  \min_{1\leq a\leq 1\vee\hat{r}_\alpha}\left\{ \frac{ \log(1/\delta_a)}{a \log(1+\frac{1 - \delta_a^{B/a}}{B})} \left(\alpha + a/( 1\vee\hat{r}_\alpha)\right)\right\}
\label{LFpreorderedKRU},
\end{align}
for $\nu=s(1+s/(1-\lambda))$,  $B=s/(1-\lambda)$, $\delta_a=\delta/(\kappa a^2)$, $a\geq 1$, $\kappa=\pi^2/6$, $\Delta(\cdot)$ defined in Theorem~\ref{th-preordered-Freed} and where $\hat{k}_\alpha$ is as in \eqref{kchapeauLF} and $\hat{r}_\alpha= \sum_{i=1}^{\hat{k}_\alpha} \ind{p_{\pi(i)} \leq  s}$ denotes the number of rejections of LF procedure at level $\alpha$.
In addition, these bounds are also valid uniformly in $\alpha\in (0,1)$ in the sense that
$$
 \P(\forall \alpha\in (0,1), \FDP(R_{\hat{k}_\alpha})\leq \ol{\FDP}_\alpha^{{\tiny \mbox{Method}}})\geq 1-\delta, \:\:\mbox{ for } \mbox{Method}\in \{{ \mbox{KR}},{ \mbox{Freed}},{ \mbox{KR-U}}\},
$$
and thus also when using a post hoc choice $\alpha=\hat{\alpha}$ of the level.
\end{corollary}

\begin{proof}
This is direct by applying \eqref{FDPKRknockoff} ($a=1$), \eqref{preorderedFreed} and \eqref{preorderedKR} to the rejection set $R_{\hat{k}_\alpha}$. 
\end{proof}

Let us now study the consistency property \eqref{equ-consistency}.
It is apparent that KR is never LF $m$-consistent: namely, for all $m\geq 1$,
$$
 \ol{\FDP}^{{\tiny \mbox{KR}}}_\alpha\geq 1\wedge c \alpha,
$$
for some constant $c>1$.  By contrast, $\ol{\FDP}^{{\tiny \mbox{Freed}}}_\alpha$ is LF $m$-consistent if $\Delta(\nu m)/\hat{r}_\alpha$ tends to $0$ in probability, that is, $(m\log\log m)^{1/2}/\hat{r}_\alpha=o_P(1)$.
For $\ol{\FDP}^{{\tiny \mbox{KR-U}}}_\alpha$, we always have
$$
\ol{\FDP}^{{\tiny \mbox{KR-U}}}_\alpha\leq \frac{ \log(1/\delta_{\hat{a}})}{\hat{a} \log(1+\frac{1 - \delta_{\hat{a}}^{B/\hat{a}}}{B})} \left(\alpha + 1/( 1\vee\hat{r}_\alpha)^{1/2}\right)
$$
by considering $\hat{a}=\lfloor (1\vee \hat{r}_\alpha)^{1/2}\rfloor$. By Lemma~\ref{lem:KRconstant}, this  provides consistency \eqref{equ-consistency} as soon as $1/\hat{r}_\alpha=o_P(1)$. This is summarized in the next result.

{
\begin{proposition}\label{prop-consistency-preordered}
Let us consider any model sequence $\P^{(m)}$ in the pre-ordered setting and denote the rejection number of the LF procedure at level $\alpha$ by $\hat{r}_{\alpha}$. Then the following envelopes are 
LF $m$-consistent in the sense of \eqref{equ-consistency}: 
\begin{itemize}
\item $\ol{\FDP}_\alpha^{{\tiny \mbox{Freed}}}$ if $(m\log\log m)^{1/2}/\hat{r}_\alpha=o_P(1)$;
\item $\ol{\FDP}_\alpha^{{\tiny \mbox{KR-U}}} $ if $1/\hat{r}_\alpha=o_P(1)$.
\end{itemize} 
\end{proposition}
The latter result means that the LF procedure at level $\alpha$ should make enough rejections in order to provide $m$-consistency.
This is exemplified in a particular model in the next section.
}

\subsection{{LF $m$-consistency in the generalized VCT model}}\label{sec:powerpreordered}

{
  We provide here a model example where conditions of Proposition~\ref{prop-consistency-preordered} are satisfied. We consider the varying coefficient two-groups (VCT) model of \cite{lei2016power}, that we generalize to the possible sparse case. 

  Here, without loss of generality we assume that the ordering $\pi$ is identity, that is, $\pi(i)=i$ for all $i\in \{1,\dots,m\}$.
{Below, with some abuse, the notation $\pi$ will be re-used to stick with the notation of \cite{lei2016power}.}
\begin{definition}\label{def:VCT} {Let $m$ be a positive integer, $\beta$ (sparsity parameter) a real  in $[0,1)$, $F_0,F_1$ two c.d.f.s on $[0,1]$ with $F_0(t)\leq t$ for all $t\in [0,1]$, and $\pi:[0,\infty)\to [0,1)$ some measurable function (instantaneous signal probability function) with $\pi(0)>0$ and $\pi(x)=\pi(1)$
    for $x\geq 1$.}
  
The generalized VCT model of parameters $m,\pi,\beta,F_0,F_1$, denoted as $\P^{(m)}_{\pi,\beta,F_0,F_1}$, is the $p$-value mixture model where  $(p_k,H_k)\in [0,1]\times \{0,1\}$, $1\leq k\leq m$, are independent and generated as follows:
\begin{itemize}
\item the variables $H_k$, $1\leq k\leq m$, are independent and $\P(H_k=1)=\pi_m(k/m)$, $1\leq k\leq m$,
  with $\pi_m(x)=\pi(m^{\beta} x)$, $x\geq 0$; 
\item conditionally on $H_1,\dots,H_k$, the $p$-values $p_k$, $1\leq k\leq m$, are independent, 
  with
  {$p_k\:|\: \set{H_k=i} \sim F_i$, $1\leq k\leq m, i\in \set{0,1}$}.
\end{itemize}
We denote $\Pi(t):=t^{-1}\int_0^t \pi(s)ds$, with $\Pi(0)=\pi(0)$ and $$\Pi_m(t):=t^{-1}\int_0^t \pi_m(s)ds = t^{-1}\int_0^{t} \pi(m^{\beta}s)ds =  m^{-\beta}t^{-1}\int_0^{m^{\beta}t} \pi(s)ds = \Pi(m^{\beta}t)$$ the expected fraction of signal before time $mt$. We also let $\pi_1:=\Pi_m(1)=\int_0^1 \pi_m(s)ds = m^{-\beta} \Pi(1)$ the overall expected fraction of signal. We consider the asymptotic where $m$ tends to infinity and $F_0,F_1$ are fixed. 
\end{definition}
When $\beta=0$, $\pi_m$, $\Pi_m$ are  fixed and we recover the dense VTC model introduced in  \cite{lei2016power} (also noting that we are slightly more general because $F_0$ is possibly non-uniform and $F_1$ not concave). The above formulation can also handle the sparse case for which $\beta\in (0,1)$ and the probability to generate a signal is shrunk to $0$ by a factor $m^\beta$. For instance, if $\pi(1)=0$, the model only generates null {hypotheses and corresponding} $p$-values $p_{k+1},\dots,p_m$ for $k\geq m^{1-\beta}$.
}

{
  We now analyze the asymptotic behavior of the number of rejections of the LF procedure. By following the same heuristic as in \cite{lei2016power}  (which is justified by a concentration argument),
  we have from \eqref{kchapeauLF} that for $k=\lfloor mt\rfloor$, 
\begin{align*}
\wh{\FDP}_k&=\frac{s}{1-\lambda}\frac{ 1 + \sum_{i=1}^{k} \ind{p_{i} >  \lambda}}{1\vee \sum_{i=1}^{k} \ind{p_{i} \leq s}}\\
&\approx \frac{s}{1-\lambda}\frac{ \left(\sum_{i=1}^{k}(1-\pi_m(i/m))\right) (1-F_0(\lambda)) +  \left(\sum_{i=1}^{k}\pi_m(i/m)\right) (1-F_1(\lambda))  }{ \left(\sum_{i=1}^{k}(1-\pi_m(i/m))\right) F_0(s) +  \left(\sum_{i=1}^{k}\pi_m(i/m)\right) F_1(s) } \\
&\approx \frac{ 1 +  \Pi_m(t) \left( \frac{1-F_1(\lambda)}{1-\lambda} -1\right) }{1 +  \Pi_m(t) \left( \frac{F_1(s)}{s} -1\right) } = \FDP^{\infty}(m^\beta t) ,
\end{align*}
by assuming $F_0(s)=s$, $F_0(\lambda)=\lambda$, $F_1(s)>s$, $F_1(\lambda)>\lambda$ and by letting
\begin{equation}\label{equ-FDPstarLF}
\FDP^{\infty}(t)= \frac{ 1 +  \Pi(t) \left( \frac{1-F_1(\lambda)}{1-\lambda} -1\right) }{1 +  \Pi(t) \left( \frac{F_1(s)}{s} -1\right) },\:\:\: t\geq 0 .
\end{equation}
By \eqref{kchapeauLF}, the quantity $\hat{k}_\alpha/m^{1-\beta}$ should be asymptotically close to  
\begin{equation}\label{equ-tstarLF}
t^*_\alpha=\max\{t\in[0,+\infty)\::\: \FDP^{\infty}(t)\leq \alpha\},
\end{equation}
with the convention $t^*_\alpha=+\infty$ if the set is not upper bounded. 
 We should however ensure that the latter set is not empty. For this, we let  
\begin{align}\label{alphalimit}
\underline{\alpha}&=\frac{ 1 +  \pi(0) \left( \frac{1-F_1(\lambda)}{1-\lambda} -1\right) }{1 +  \pi(0) \left( \frac{F_1(s)}{s} -1\right) }. 
\end{align}
Hence, $\hat{r}_\alpha= \sum_{i=1}^{\hat{k}_\alpha} \ind{p_{i} \leq  s}$, the number of rejections of the LF procedure, should be close to $\left(\sum_{i=1}^{\hat{k}_\alpha}(1-\pi_m(i/m))\right) F_0(s) +  \left(\sum_{i=1}^{\hat{k}_\alpha}\pi_m(i/m)\right) F_1(s)\gtrsim \hat{k}_\alpha s\approx m^{1-\beta} t^*_\alpha s$. This heuristic is formalized in the next result.
\begin{theorem}\label{th:powerLF}
Consider a generalized VCT model $\P^{(m)}_{\pi,\beta,F_0,F_1}$ with parameters $\beta,\pi,F_0,F_1$ (see Definition~\ref{def:VCT}) and the LF procedure with parameter $s,\lambda$ (see \eqref{kchapeauLF}), with the assumptions:
\begin{itemize}
\item[(i)] $\Pi:t\in [0,\infty)\to \R_+$ is continuous, decreasing, and $L$-Lipschitz;
\item[(ii)]  $F_0(s)=s$, $F_0(\lambda)=\lambda$, $F_1(s)>s$, $F_1(\lambda)>\lambda$;
\item[(iii)] $\alpha>\underline{\alpha}$ where $\underline{\alpha}$ is defined by \eqref{alphalimit}.
\end{itemize}
Let $\alpha'=(\underline{\alpha}+\alpha)/2 \in (\underline{\alpha},\alpha)$, $t^*_{\alpha'}\in (0,+\infty]$ given by \eqref{equ-tstarLF},   $t^*_{m}=t^*_{\alpha'}\wedge m^{\beta}$ and let $a\geq 1$ be an integer $a\leq  m^{1-\beta}t^*_{m}$ such that $
r = \frac{4}{a^{1/4}} \left(\frac{1}{s}+\frac{1}{1-\lambda}\right)
$ is small enough to provide $r\leq (\alpha-\underline{\alpha})/4$. 
Then the number of rejections $\hat{r}_\alpha= \sum_{i=1}^{\hat{k}_\alpha} \ind{p_{i} \leq  s}$ of the LF procedure  \eqref{kchapeauLF} is such that
\begin{equation}\label{mainresultlemmapreordered}
\P^{(m)}_{\pi,\beta,F_0,F_1}( \hat{r}_\alpha < r_m^*) \leq 2(2+a^{1/2})e^{-2a^{1/2}} ,\:\:\:\:r^*_m=\lfloor m^{1-\beta}t^*_{m}\rfloor s/2.
 \end{equation}
In particular, choosing $a=1+\lfloor (\log m)^2\rfloor$, we have as $m$ grows to infinity,
$
m^{1-\beta}/\hat{r}_\alpha =O_P(1).
$
\end{theorem}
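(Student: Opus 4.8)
The plan is to exhibit a deterministic index $k^*_m:=\lfloor m^{1-\beta}t^*_m\rfloor$ (note $k^*_m\le m$ since $t^*_m\le m^\beta$) such that, with probability at least $1-2(2+a^{1/2})e^{-2a^{1/2}}$, the number of rejections $\hat k_\alpha$ of the LF procedure satisfies $\hat k_\alpha\ge k^*_m$. Since $\hat r_\alpha=\sum_{i=1}^{\hat k_\alpha}\ind{p_{i}\le s}$ is nondecreasing in $\hat k_\alpha$, this forces $\hat r_\alpha\ge A_{k^*_m}$ where $A_k:=\sum_{i=1}^{k}\ind{p_{i}\le s}$, and $A_{k^*_m}$ concentrates around $\E[A_{k^*_m}]=\sum_{i=1}^{k^*_m}\big((1-\pi_m(i/m))F_0(s)+\pi_m(i/m)F_1(s)\big)\ge k^*_m s$ (using $F_0(s)=s$ and $F_1(s)>s$). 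Thus the whole statement reduces to analysing two sums of independent Bernoulli variables — $A_{k^*_m}$ and the numerator count $B_{k^*_m}:=\sum_{i=1}^{k^*_m}\ind{p_{i}>\lambda}$ — at the single index $k^*_m$, using that the pairs $(p_i,H_i)$ are independent across $i$ in the VCT model.

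First I would carry out the deterministic bookkeeping. By \eqref{kchapeauLF}, $\wh\FDP_{k^*_m}=\frac{s}{1-\lambda}\frac{1+B_{k^*_m}}{1\vee A_{k^*_m}}$, and computing $\E[A_{k^*_m}]$, $\E[B_{k^*_m}]$ together with $F_0(s)=s$, $F_0(\lambda)=\lambda$ from (ii) shows that $\frac{s}{1-\lambda}\E[B_{k^*_m}]/\E[A_{k^*_m}]$ equals the right-hand side of \eqref{equ-FDPstarLF} with $\Pi(t)$ replaced by the empirical average $\bar\pi:=\frac{1}{k^*_m}\sum_{i=1}^{k^*_m}\pi_m(i/m)$. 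Next, $t\mapsto\FDP^\infty(t)$ is nondecreasing, because its numerator decreases and its denominator increases in $\Pi(t)$ by $F_1(\lambda)>\lambda$, $F_1(s)>s$, while $\Pi$ is decreasing by (i). As $\alpha'=(\underline\alpha+\alpha)/2>\underline\alpha=\FDP^\infty(0)$ and $\FDP^\infty$ is continuous, we get $t^*_{\alpha'}>0$ and $\FDP^\infty(t^*_m)\le\alpha'$, the truncation $t^*_m=t^*_{\alpha'}\wedge m^\beta$ also handling the case $t^*_{\alpha'}=+\infty$. A Riemann-sum estimate controlled by the Lipschitz regularity in (i) gives $\bar\pi=\Pi(m^{\beta-1}k^*_m)+O(m^{\beta-1})\ge\Pi(t^*_m)+O(m^{\beta-1})$ (using $m^{\beta-1}k^*_m\le t^*_m$ and $\Pi$ decreasing), so, since the map in \eqref{equ-FDPstarLF} is decreasing and Lipschitz in $\Pi$, $\frac{s}{1-\lambda}\E[B_{k^*_m}]/\E[A_{k^*_m}]\le\FDP^\infty(t^*_m)+O(m^{\beta-1})\le\alpha'+O(m^{\beta-1})$.

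Second, I would apply a Bernstein/Hoeffding inequality to $A_{k^*_m}$ and $B_{k^*_m}$ at deviation scale $(k^*_m)^{1/2}a^{1/4}$, which produces the exponent $2a^{1/2}$; accounting for the few one-sided events involved yields the stated failure probability $2(2+a^{1/2})e^{-2a^{1/2}}$. On the complementary event, $A_{k^*_m}$ and $B_{k^*_m}$ both lie within $(k^*_m)^{1/2}a^{1/4}$ of their means; since $a\le m^{1-\beta}t^*_m$ forces $k^*_m\ge a$, the resulting relative fluctuations are at most $\frac{c}{a^{1/4}}\big(\frac1s+\frac1{1-\lambda}\big)$, so the hypothesis $r=\frac{4}{a^{1/4}}\big(\frac1s+\frac1{1-\lambda}\big)\le(\alpha-\underline\alpha)/4$ comfortably covers the combined fluctuation-plus-Riemann error relative to the gap $\alpha-\alpha'=(\alpha-\underline\alpha)/2$, giving $\wh\FDP_{k^*_m}\le\alpha'+r+O(m^{\beta-1})\le\alpha$, i.e. $\hat k_\alpha\ge k^*_m$. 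On the same event $A_{k^*_m}\ge\E[A_{k^*_m}]-(k^*_m)^{1/2}a^{1/4}\ge k^*_m s(1-r)\ge k^*_m s/2=r^*_m$, whence $\hat r_\alpha\ge A_{k^*_m}\ge r^*_m$, which is \eqref{mainresultlemmapreordered}.

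Finally, for the $O_P$ statement I would take $a=1+\lfloor(\log m)^2\rfloor$, so $a^{1/2}\sim\log m$: then $r\to0$ (hence $r\le(\alpha-\underline\alpha)/4$ for large $m$), $a\le m^{1-\beta}t^*_m$ for large $m$ since $m^{1-\beta}t^*_m\ge\min(m^{1-\beta}t^*_{\alpha'},m)\to\infty$ as $\beta<1$, and $2(2+a^{1/2})e^{-2a^{1/2}}\sim2(\log m)\,m^{-2}\to0$; on the success event $\hat r_\alpha\ge r^*_m=\lfloor m^{1-\beta}t^*_m\rfloor s/2$, which is of order $m^{1-\beta}$ in all cases (whether $t^*_{\alpha'}$ is a finite positive constant or $+\infty$, in which latter case $t^*_m=m^\beta$). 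Hence $m^{1-\beta}/\hat r_\alpha=O_P(1)$.

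The main obstacle I anticipate is the constant-chasing in the last two steps: one must fit, simultaneously inside the single margin $\alpha-\alpha'=(\alpha-\underline\alpha)/2$ furnished by the buffer level $\alpha'$, the boundary terms ($+1$ in the numerator and $1\vee$ in the denominator), the Bernoulli fluctuations of size $(k^*_m)^{1/2}a^{1/4}$ relative to a mean of size $\ge k^*_m s$, and the deterministic Riemann/continuity error from replacing $\bar\pi$ by $\Pi(t^*_m)$ — all while keeping the failure probability in the closed form $2(2+a^{1/2})e^{-2a^{1/2}}$; the truncation $t^*_m=t^*_{\alpha'}\wedge m^\beta$ is precisely the device that makes the argument uniform whether or not $\FDP^\infty$ ever reaches the level $\alpha'$.
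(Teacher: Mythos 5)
Your proposal is correct and follows essentially the same route as the paper's proof: identify the deterministic index $k_0=\lfloor m^{1-\beta}t^*_m\rfloor$, use the buffer level $\alpha'$ together with monotonicity of $\FDP^\infty$, a Riemann-sum/Lipschitz approximation of the signal fraction, and Hoeffding-type concentration of the two Bernoulli counts to get $\wh{\FDP}_{k_0}\leq\alpha$ and hence $\hat{k}_\alpha\geq k_0$, $\hat{r}_\alpha\geq k_0 s/2$. The only (immaterial) difference is that the paper invokes a concentration bound uniform over $a\leq k\leq m$ (Lemma~\ref{lembinom}), whereas you concentrate only at the single index $k_0$, which suffices and in fact yields a slightly smaller failure probability than the stated $2(2+a^{1/2})e^{-2a^{1/2}}$.
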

Theorem~\ref{th:powerLF} is proved in Section~\ref{proof:th:powerLF}.
Condition (ii) is more general that in  \cite{lei2016power} and allows to handle binary $p$-values, like in the `knockoffs' situation (for which $F_0$ and $F_1$ are not continuous).
The condition (iii) was overlooked in \cite{lei2016power}, but it is needed to ensure the existence of $t^*_\alpha$. 
It reads equivalently 
\begin{equation}\label{equ-instantproba}
\pi(0)>\frac{1-\alpha}{1- \frac{1-F_1(\lambda)}{1-\lambda} + \alpha \left( \frac{F_1(s)}{s} -1\right)},
\end{equation}
which ensures that the probability to generate a false null hypothesis is sufficiently large at the beginning of the $p$-value sequence, with a minimum amplitude function of $F_1(s)$ and $F_1(\lambda)$.  
Note that in the `knockoffs' case where  $s=\lambda=1/2$, we have $\underline{\alpha}=\frac{ 1 -  \pi(0) M }{1 +  \pi(0) M }$, where $M=2F_1(1/2)-1>0$ can be interpreted as a `margin'. Hence, the critical level $\underline{\alpha}$ is decreasing in $\pi(0) M$. Hence, the setting is more favorable either when $\pi(0)$ increases, or when the margin $M$ increases.
}

{
\begin{corollary}\label{cor-consistency-preordered}
  Consider the sequence of generalized VCT models $(\P^{(m)}_{\pi,\beta,F_0,F_1},m\geq 1)$ defined above.  Assume that the parameters $\pi,\beta,F_0,F_1$ satisfy the assumptions of Theorem~\ref{th:powerLF}.
  Then the consistency \eqref{equ-consistency} holds for the sequence $(\P^{(m)}_{\pi,\beta,F_0,F_1},m\geq 1)$ and for any LF procedure using $\lambda\geq s$ in either of the two following cases:
\begin{itemize}
\item for the KR-U envelope \eqref{preorderedKR} and the corresponding bound \eqref{LFpreorderedKRU}.
\item for the Freedman envelope  \eqref{preorderedFreed}
		 and the corresponding bound \eqref{LFpreorderedFreedU} if either $\lambda=s$ or $\beta<1/2$; 
\end{itemize}
\end{corollary}
\begin{proof}
This is a direct consequence of Theorem~\ref{th:powerLF} because $
m^{1-\beta}/\hat{r}_\alpha =O_P(1)
$
 in that context and $\hat{r}_\alpha$ is nondecreasing in $\alpha$. 
 To see why the Freedman envelope is consistent when $\lambda=s$, we note that in this case $\hat{k}_\alpha=\sum_{i=1}^{\hat{k}_\alpha} \ind{p_{\pi(i)} \leq  s}+\sum_{i=1}^{\hat{k}_\alpha} \ind{p_{\pi(i)} >  \lambda} \leq (1+\alpha s/(1-\lambda)) (1\vee \hat{r}_\alpha)$, hence the quantity $\Delta(\nu \hat{k}_\alpha)/(1\vee\hat{r}_\alpha)$ is $o_P(1)$ as $1/\hat{r}_\alpha=o_P(1)$.
\end{proof}

\begin{remark}
Similarly to Section~\ref{sec:interptopk} in the top-$k$ setting, the bounds KR, Freedman and {KR-U} can be improved by performing the interpolation operation \eqref{equ-interp3} in the pre-ordered setting.
\end{remark}

\section{Results in the online case}

\subsection{Online setting}\label{sec:onlinesetting}

We consider an infinite stream of $p$-values $p_1,p_2,\dots$ testing null hypotheses $H_1,H_2,\dots$, respectively. In the online setting, these $p$-values come one at a time and a decision should be made at each time immediately and irrevocably, possibly on the basis of past decisions.

The decision at time $k$ is to reject $H_k$ if  $p_k\leq \alpha_k$ for some critical value $\alpha_k$ only depending on the past decisions. 
An online procedure is thus defined by a sequence of critical values $\mathcal{A}=(\alpha_k, k\geq 1)$, that is predictable in the following sense
$$
\alpha_{k+1} \in \mathcal{G}_k = 
\sigma
( \ind{p_i\leq \alpha_i}, i\leq k), \:\:k\geq 1.
$$
A classical assumption is that each null $p$-value is super-uniform conditionally on past decisions, that is,
\begin{equation}\label{onlinepvaluehyp}
\P(p_k\leq x\:|\: \mathcal{G}_k) \leq x,\:\:k\in \cH_0,
\end{equation}
where $\cH_0=\{k\geq 1\:|\: H_k=0\}$. Condition \eqref{onlinepvaluehyp} is for instance satisfied if the $p$-values are all mutually independent and marginally super-uniform under the null.

For a {\it fixed} procedure $\mathcal{A}$, we consider the path 
\begin{equation}\label{pathonline}
R_k=\{1\leq i\leq k\::\: p_i\leq \alpha_i \},\:\: k\geq 1.
\end{equation}
We will also denote 
\begin{equation}\label{nbrejections}
R(k)=|R_k|=\sum_{i=1}^k \ind{p_i\leq \alpha_i},\:\: k\geq 1,
\end{equation}
 the number of rejections before time $k$ of the considered procedure.
 A typical procedure controlling the online FDR is the LORD procedure 
\begin{equation}\label{equalphaLORD}
\alpha_k = W_0 \gamma_k + (\alpha-W_0) \gamma_{k-\tau_1} + \alpha \sum_{j\geq 2} \gamma_{k-\tau_j},
\end{equation}
where $W_0\in [0,\alpha]$, each $\tau_j$ is the first time with $j$ rejections, $(\gamma_k)_{k}$ is a non-negative (`spending') sequence with $\sum_{k\geq 0}\gamma_k\leq 1$ and $\gamma_{k}=0$ for $k<0$. The latter has been extensively studied in the literature \citep{FosterStineAlphainvest,AharoniRossetGAI,javanmard2018online}, and further improved by \cite{ramdas2017online}.
Under independence of the $p$-values and super-uniformity of the $p$-values under the null, the LORD procedure controls the online FDR in the sense of 
$$
\sup_{k\geq 1} \E [\FDP(R_k)]\leq \alpha,
$$
see Theorem~2 (b) in \cite{ramdas2017online}. Here, we consider the different (and somehow more demanding) task of finding a bound on the realized online FDP, by deriving confidence envelopes \eqref{confenvelop}. Note that this will be investigated for any online procedure and not only for LORD, see Section~\ref{sec:confenveloponline}. Also, we will study the consistency of the envelope for any LORD-type procedure in Section~\ref{seconlineconsist}.

\subsection{New confidence envelopes}\label{sec:confenveloponline}

The first envelope is a consequence of the general result stated in Theorem~\ref{th-mainresult}.

\begin{theorem}\label{th:FDPcontrolonline}
In the online setting described in Section~\ref{sec:onlinesetting}, consider any online procedure $\mathcal{A}=(\alpha_k, k\geq 1)$ and assume \eqref{onlinepvaluehyp}. Then for any $\delta\in (0,1)$, the following is a $(1-\delta)$-confidence envelope for the path \eqref{pathonline} in the sense of  \eqref{confenvelop}:
\begin{equation}\label{onlineFreedU}
\ol{\FDP}_{\mathcal{A},k}^{{\tiny \mbox{Freed}}}:=1\wedge \frac{ \sum_{i=1}^k \alpha_i + \Delta\left(\sum_{i=1}^k  \alpha_i\right)
}{1\vee R(k)},\:\:k\geq 1,
\end{equation}
where $R(k)$ is given by \eqref{nbrejections}, 
	$\Delta(u)=2\sqrt{\varepsilon_u}\sqrt{{u \vee 1}} +  {\frac{1}{2}} \varepsilon_u$, 
	$\varepsilon_u=\log((1+\kappa)/\delta)+ 2\log\left(1+\log_2\left(u\vee 1\right)\right)$, $u>0$ and $\kappa= \pi^2/6$. 
\end{theorem}

\begin{proof}
We apply Theorem~\ref{th-mainresult} in the online setting for $\lambda=0$ (and further upper-bounding each term $\ind{p_{\pi(i)} >  0}$ by $1$), $\pi(k)=k$, because \eqref{condlemma1} is satisfied by \eqref{onlinepvaluehyp}. 
\end{proof}

Next, the envelope of \cite{katsevich2020simultaneous} is as follows
\begin{equation}\label{onlineKR}
\ol{\FDP}^{\mbox{\tiny KR}}_{\mathcal{A},k} := 1\wedge\left(\frac{ \log(1/\delta)}{ a\log(1+\log(1/\delta)/a)} \frac{a+\sum_{i=1}^k \alpha_i}{1\vee R(k)}\right),
\end{equation}
for some parameter $a>0$ to choose. While the default choice in \cite{katsevich2020simultaneous} is $a=1$, applying a union w.r.t. $a\in \mathbb{N}\backslash\{0\}$ provides the following result.

\begin{theorem}\label{th:FDPcontrolonline}
In the online setting described in Section~\ref{sec:onlinesetting} such that \eqref{onlinepvaluehyp} is satisfied, and for any online procedure $\mathcal{A}=(\alpha_k, k\geq 1)$, for any $\delta\in (0,1)$, the following is a $(1-\delta)$-confidence envelope for the path \eqref{pathonline} in the sense of  \eqref{confenvelop}:
\begin{equation}\label{onlineKRU}
\ol{\FDP}_{\mathcal{A},k}^{{\tiny \mbox{KR-U}}}:= 1\wedge \min_{a\in \mathbb{N}\backslash\{0\}} \left\{\frac{ \log(1/\delta_a)}{ a\log(1+\log(1/\delta_a)/a)} \frac{a+\sum_{i=1}^k \alpha_i}{1\vee R(k)}\right\},\:\:k\geq 1,
\end{equation}
where $R(k)$ is given by \eqref{nbrejections}, $\delta_a=\delta/(\kappa a^2)$, $a\geq 1$, for $\kappa=\pi^2/6$.
\end{theorem}

\begin{remark}
{Note that in the online setting, the obtained guarantee \eqref{confenvelop} is not uniform in the procedure $\mathcal{A}$ (in contrast with the envelopes in top-$k$ and preordered cases which were uniform in $k$ and thus also in the cut-off procedure).}
\end{remark}

\subsection{Confidence envelope for LORD-type procedures and $m$-consistency}\label{seconlineconsist}

We now turn to the special case of online procedures satisfying the following condition:
\begin{equation}\label{equ-condmFDR}
 \sum_{i=1}^{k}  \alpha_i \leq \alpha (1\vee R(k)),\:\:\:k\geq 1.
\end{equation}
Classically, this condition is sufficient to control the online FDR (if the $p$-values are independent and under an additional monotonicity assumption), see Theorem~2 (b) in \cite{ramdas2017online}. In particular, it is satisfied by LORD \eqref{equalphaLORD}.

\begin{corollary}\label{cor-LORD}
In the online setting described in Section~\ref{sec:onlinesetting}, consider any online procedure $\mathcal{A}=(\alpha_k, k\geq 1)$, satisfying \eqref{equ-condmFDR} for some $\alpha\in (0,1)$, and assume \eqref{onlinepvaluehyp}. Then for any $\delta\in (0,1)$,  the following quantities are $(1-\delta)$-confidence bounds for the FDP of the procedure: for all $k\geq 1$,
\begin{align}
\ol{\FDP}^{\mbox{\tiny KR}}_{\alpha,k} &:= 1\wedge\left( \frac{ \log(1/\delta)}{ \log(1+\log(1/\delta))} \left(\alpha + 1/(1\vee R(k)\right)\right) ;
\label{onlineKRLORD}\\
\ol{\FDP}^{{\tiny \mbox{Freed}}}_{\alpha,k} &:=  1\wedge\left(\alpha +\frac{  \Delta\left( \alpha(1\vee R(k))\right)
}{1\vee R(k)}\right),\:\:k\geq 1;
\label{onlineFreedU}\\
\ol{\FDP}^{{\tiny \mbox{KR-U}}}_{\alpha,k} &:=  1\wedge  \min_{a\geq 1} \left\{\frac{ \log(1/\delta_a)}{ a\log(1+\log(1/\delta_a)/a)} \left(\alpha +a/(1\vee R(k)\right)\right\}
\label{onlineKRU},
\end{align}
for $\delta_a=\delta/(\kappa a^2)$, $a\geq 1$, $\kappa=\pi^2/6$, $\Delta(\cdot)$ defined in Theorem~\ref{th:FDPcontrolonline} and where $R(k)$ is given by \eqref{nbrejections}.
\end{corollary}

\begin{proof}
This is direct by applying \eqref{onlineKR} ($a=1$), \eqref{onlineFreedU} and \eqref{onlineKRU} and by using the inequality \eqref{equ-condmFDR} in the corresponding bound. \end{proof}

Let us now consider these bounds for the LORD procedure \eqref{equalphaLORD}, and study the LORD $m$-consistency property for each envelope $\ol{\FDP}_{\alpha,k},k\geq 1$: 
for all $\epsilon>0$,
 \begin{equation}\label{equ-consistency-online}
 \lim_{k \to \infty} \P\left(  \ol{\FDP}_{\alpha,k}-\alpha\geq \epsilon\right) = 0.
 \end{equation}
 where the asymptotics is when the time $k$  tends to infinity.

Clearly, we have 
$\ol{\FDP}^{\mbox{\tiny KR}}_{\alpha}\geq 1\wedge (c \alpha)$ for all $k\geq 1$, where $c>1$ is a constant.  Hence, the KR  envelope is not LORD $m$-consistent. By contrast, it is apparent that both the Freedman envelope and the uniform KR envelope are LORD $m$-consistent provided that $1/R(k)=o_P(1)$ as $k$ tends to infinity (consider $a=\sqrt{1\vee R(k)}$ and use Lemma~\ref{lem:KRconstant} for the KR-U envelope). This is summarized in the next result.

{
\begin{proposition}\label{prop-consistency-online}
Let us consider any online model $\P$ for which \eqref{onlinepvaluehyp} is satisfied and the LORD procedure at level $\alpha$ which rejects $R(k)$ nulls at time $k$, then the  envelopes $(\ol{\FDP}_{\alpha,k}^{{\tiny \mbox{Freed}}},k\geq 1)$ and $(\ol{\FDP}_{\alpha,k}^{{\tiny \mbox{KR-U}}},k\geq 1)$ are 
LORD $m$-consistent in the sense of \eqref{equ-consistency-online} provided that  $1/R(k)=o_P(1)$ as $k$ tends to infinity.
\end{proposition}
The latter result means that the LORD procedure at level $\alpha$ should make enough rejections in order $m$-consistency to be guaranteed.
This condition is met in classical online models, as the next section shows.
}

\subsection{{LORD $m$-consistency in a vanilla online model}}\label{sec:onlinepower}

{
\begin{definition}
  The online one-sided Gaussian mixture model of parameters $\pi_1,F_1$, denoted by $\P_{\pi_1,F_1}$, is given by the i.i.d. $p$-value stream $(p_k,H_k)\in [0,1]\times \{0,1\}$, $k\geq 1$, 
  with
\begin{itemize}
\item  $\P(H_k=1)=\pi_1$ for some {\it fixed} $\pi_1\in (0,1)$;
\item $p$-values are uniform under the null: $p_k\:|\: H_k=0 \sim U(0,1)$;
\item $p$-values have the same alternative distribution: $p_k\:|\: H_k=1 \sim F_1$, where $F_1$ is the c.d.f. corresponding to the one-sided Gaussian problem, that is, $F_1(x)=\bar \Phi(\bar \Phi^{-1}(x)-\mu)$, $x\in [0,1]$, for some $\mu>0$.
\end{itemize}
\end{definition}
Here, we make no sparsity assumption: $\pi_1$ is assumed to be constant across time. This will ensure that the online procedure maintains a chance to make discoveries even when the time grows to infinity.   
\begin{theorem}\label{thonlinepower}
Consider the one-sided Gaussian online mixture model and the LORD procedure with $W_0\in (0,\alpha)$ and a spending sequence $\gamma_k=\frac{1}{k (\log(k))^\gamma}$,  $\gamma>1$. Then its rejection number $R(k)$ at time $k$ satisfies: for all $a\in (0,1)$, $k\geq 1$,
\begin{align}\label{equonlinepower}
\P(R(k)< k^{1-a})\leq  c k^{-a},
\end{align}
where $c$ is some constant only depending on $\alpha$ ,$W_0$, $\gamma$, $\mu$ and  $\pi_1$.
In particular, $k^{1-a}/R(k)=O_P(1)$ when $k$ tends to infinity, {for any $a>0$}.
\end{theorem}
Theorem~\ref{thonlinepower} is proved in Section~\ref{proof:thonlinepower}.
\begin{corollary}\label{cor-consistency-online}
Consider the online one-sided  Gaussian mixture model $\P_{\pi_1,F_1}$ defined above and the LORD procedure with $W_0\in (0,\alpha)$ and a spending sequence $\gamma_k=\frac{1}{k (\log(k))^\gamma}$, $k\geq 1$ for $\gamma>1$. 
Then both the Freedman envelope \eqref{onlineFreedU} and the uniform KR envelope  \eqref{onlineKRU} are consistent in the sense of \eqref{equ-consistency-online} for the model $\P_{\pi_1,F_1}$. 
\end{corollary}
\begin{proof}
This is a direct consequence of Theorem~\ref{thonlinepower}, which provides  that $k^{1/2}/R(k)=O_{\P_{\pi_1,F_1}}(1)$ when $k$ tends to infinity.
\end{proof}
\begin{remark}
Similarly to Section~\ref{sec:interptopk} in the top-$k$ setting, the bounds KR, Freedman and KR-U can be improved by performing the interpolation operation \eqref{equ-interp3} in the online setting.
\end{remark}
}
\section{Numerical experiments}\label{sec:num}
 
In this section, we illustrate our findings by conducting numerical experiments\footnote{All our numerical experiments are reproducible from the code provided in the repository \href{https://github.com/iqm15/ConsistentFDP}{https://github.com/iqm15/ConsistentFDP}.}
 in each of the considered settings: top-$k$, pre-ordered and online. 
Throughout the experiments, the default value for $\delta$ is $0.25$ and the default number of replications to evaluate each FDP bound is $1000$. 
 
 
 \subsection{Top-$k$}

\begin{center}
\begin{figure}[h!]
\begin{tabular}{cc}
$\alpha=0.05$ & $\alpha=0.1$\\
\includegraphics[scale=0.12]{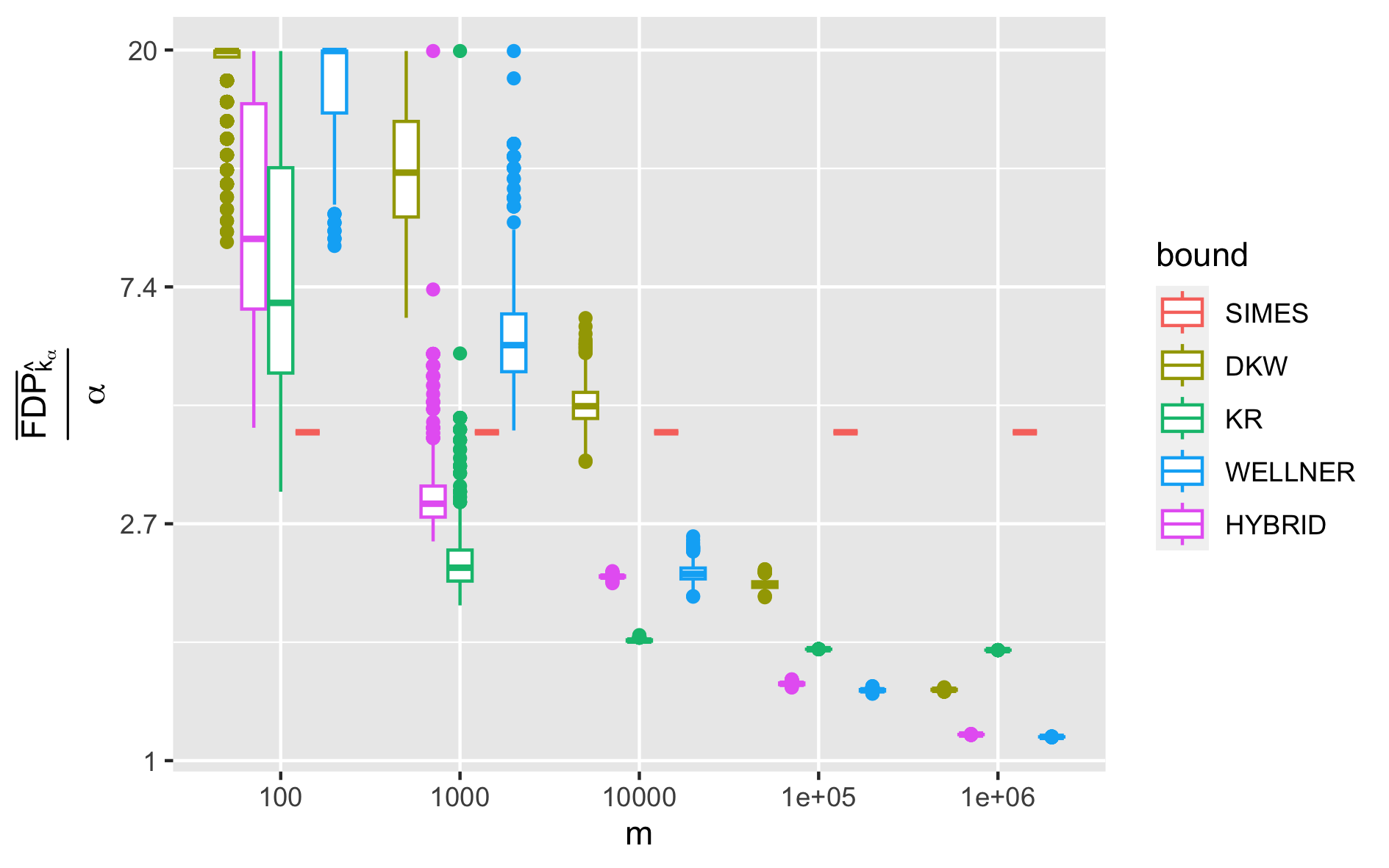}&\includegraphics[scale=0.12]{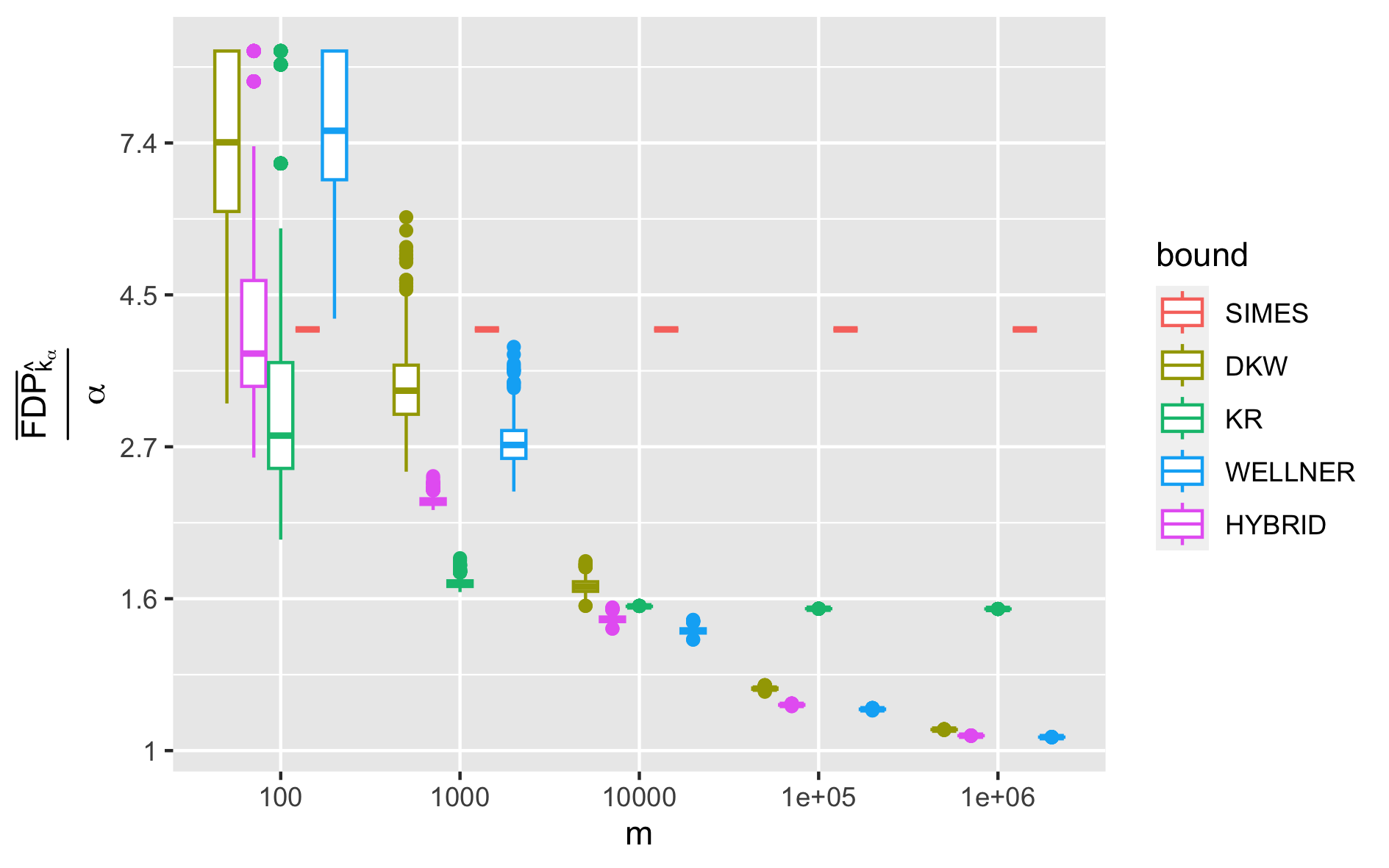}\\
$\alpha=0.15$ & $\alpha=0.2$\\
\includegraphics[scale=0.12]{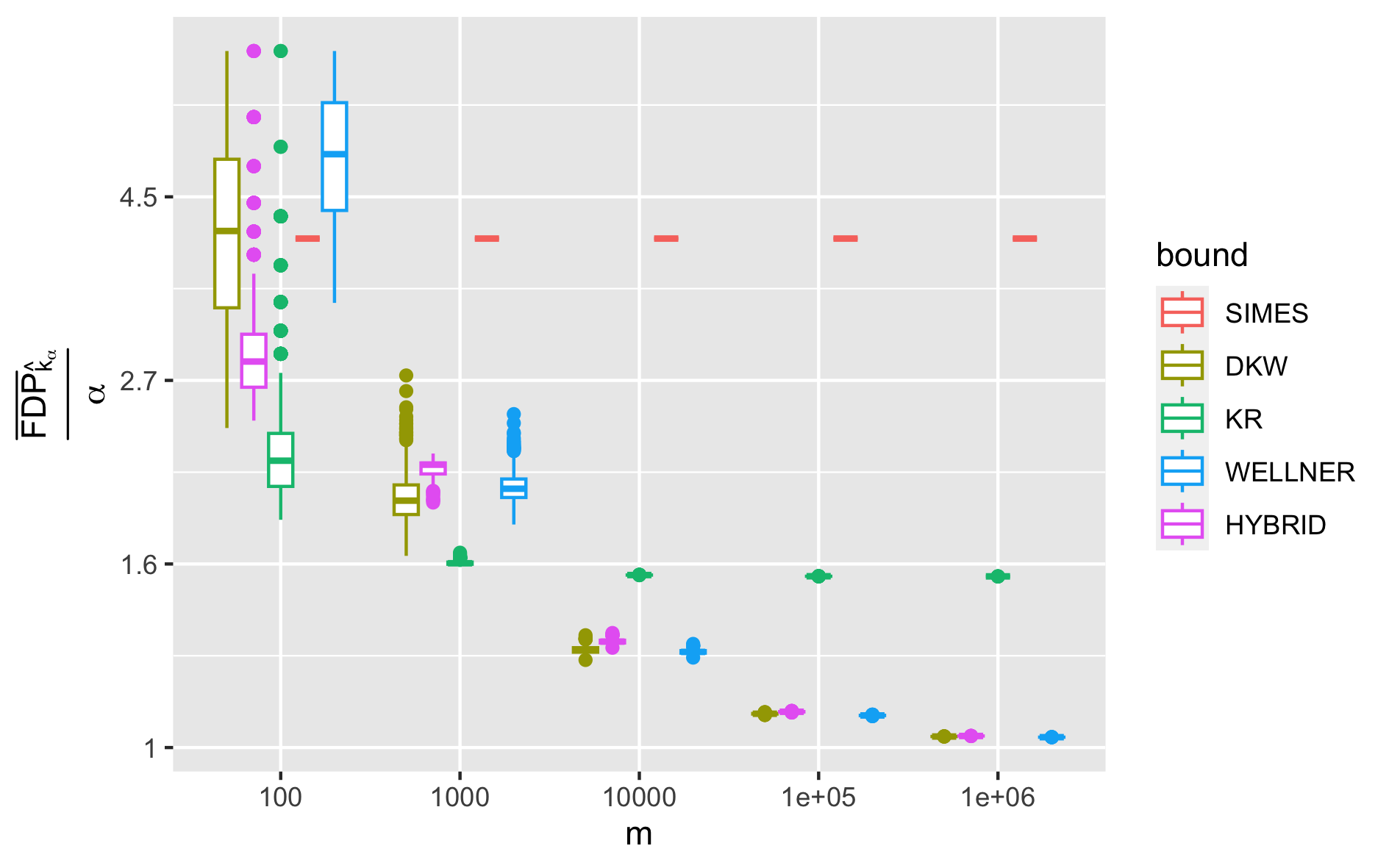}&\includegraphics[scale=0.12]{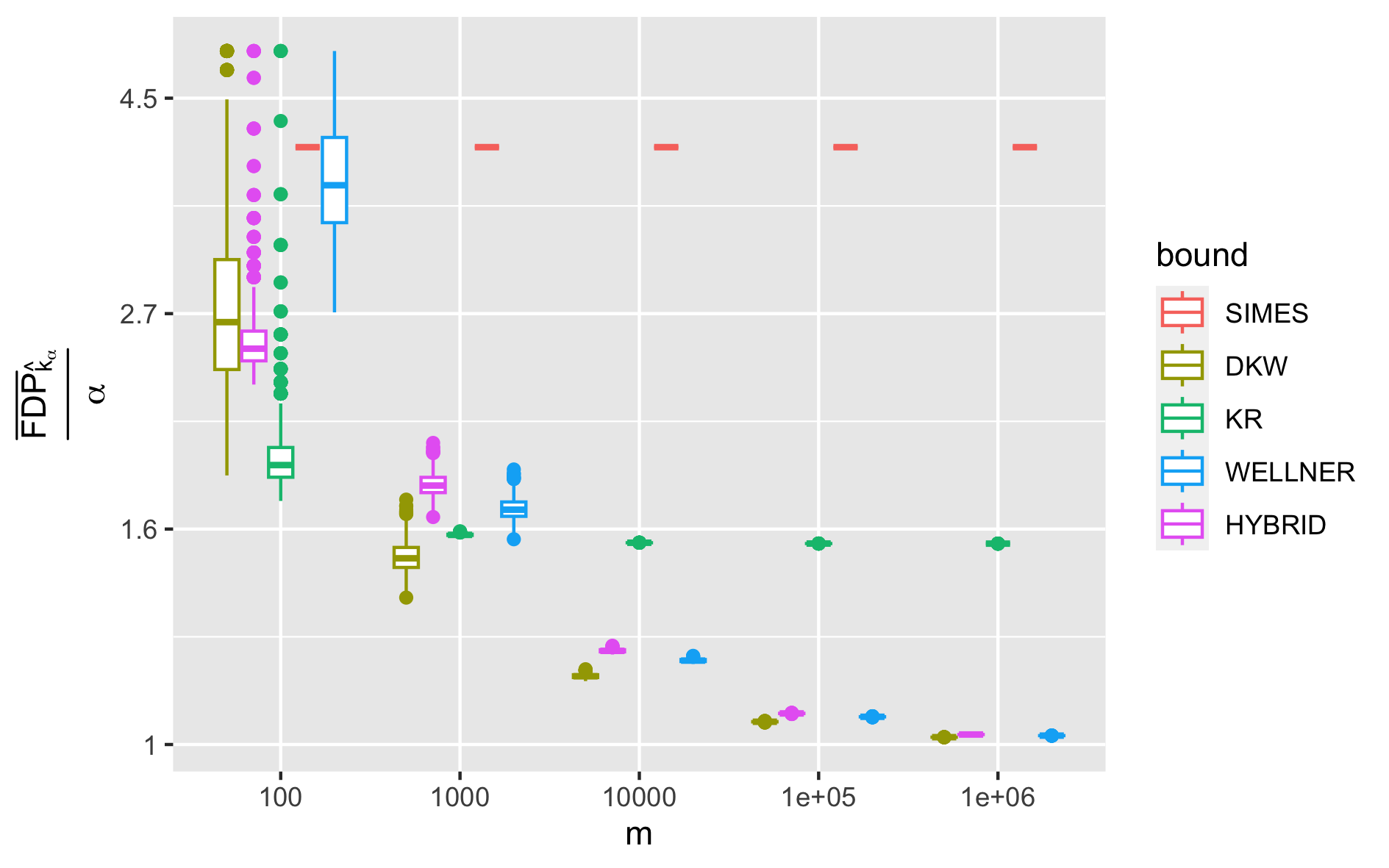}
\end{tabular}
\caption{Top-$k$ dense case ($\pi_0=0.5$, $\mu=1.5$).\label{fig:topkdense}}
\end{figure}
\end{center}

Here, we consider the top-$k$ setting of Section~\ref{sec:topksetting}, for alternative $p$-values distributed as $F_1(x)=\ol{\Phi}(\ol{\Phi}^{-1}(x)-\mu)$ (one-sided Gaussian location model), and for different values of $\mu$ and of $\pi_0$. To investigate the consistency property, we take $m$ varying in the range $\{10^{i},2\leq i\leq 6\}$, and we consider the FDP bounds $ \ol{\FDP}^{{\tiny \mbox{Simes}}}_\alpha$ \eqref{BHtopkSimes}, 
$\ol{\FDP}^{{\tiny \mbox{DKW}}}_\alpha$ \eqref{BHtopkDKW}, $\ol{\FDP}^{{\tiny \mbox{KR}}}_\alpha$ \eqref{BHtopkKR}, $\ol{\FDP}_\alpha^{{\tiny \mbox{Well}}}$ \eqref{BHtopkWellner} for  $\alpha\in\{ 0.05,0.1,0.15,0.2\}$. We also add for comparison the hybrid bound 
$$
\ol{\FDP}^{{\tiny \mbox{Hybrid}}}_{\alpha,\delta} := \min\left(\ol{\FDP}^{{\tiny \mbox{KR}}}_{\alpha,\delta/2},\ol{\FDP}_{\alpha,\delta/2}^{{\tiny \mbox{Well}}} \right),
$$
 which also provides the correct coverage while being close to the best between the Wellner and KR bounds.
 
 Figure~\ref{fig:topkdense} displays boxplots of the different FDP bounds in the dense case for which $\pi_0=1/2$, $\mu=1.5$. When $m$ gets large, we clearly see the inconsistency of the bounds Simes, KR and the consistency of the bounds Wellner, Hybrid, DKW, which corroborates the theoretical findings (Corollary~\ref{cor-consistencytopk}). In sparser scenarios,  Figure~\ref{fig:topksparse} shows that the consistency is less obvious for the Wellner and Hybrid bounds and gets violated for the DKW bound when $m_1\propto m^{0.55}$, as predicted from Corollary~\ref{cor-consistencytopk} (regime $\beta\geq 1/2$).  Overall, the new bounds are expected to be better as the number of rejections gets larger and KR bounds remain better when the number of rejections is expected to be small. The hybrid bound hence might be a good compromise for a practical use.
 
 The adaptive versions of the bounds (Section~\ref{sec:adaptive}) are displayed on Figure~\ref{fig:topkadapt}. By comparing the left and the right panels, we see that the uniform improvement can be significant, especially for the Wellner and DKW bounds. By contrast, the improvement for KR is slightly worse. This can be explained from Figure~\ref{fig:topkadaptpi0chap}, that evaluates the quality of the different $\pi_0$ estimators. DKW, which is close to an optimized Storey-estimator, is the best, followed closely by the Wellner estimator.

\begin{center}
\begin{figure}[!]
\begin{tabular}{cc}
\includegraphics[scale=0.12]{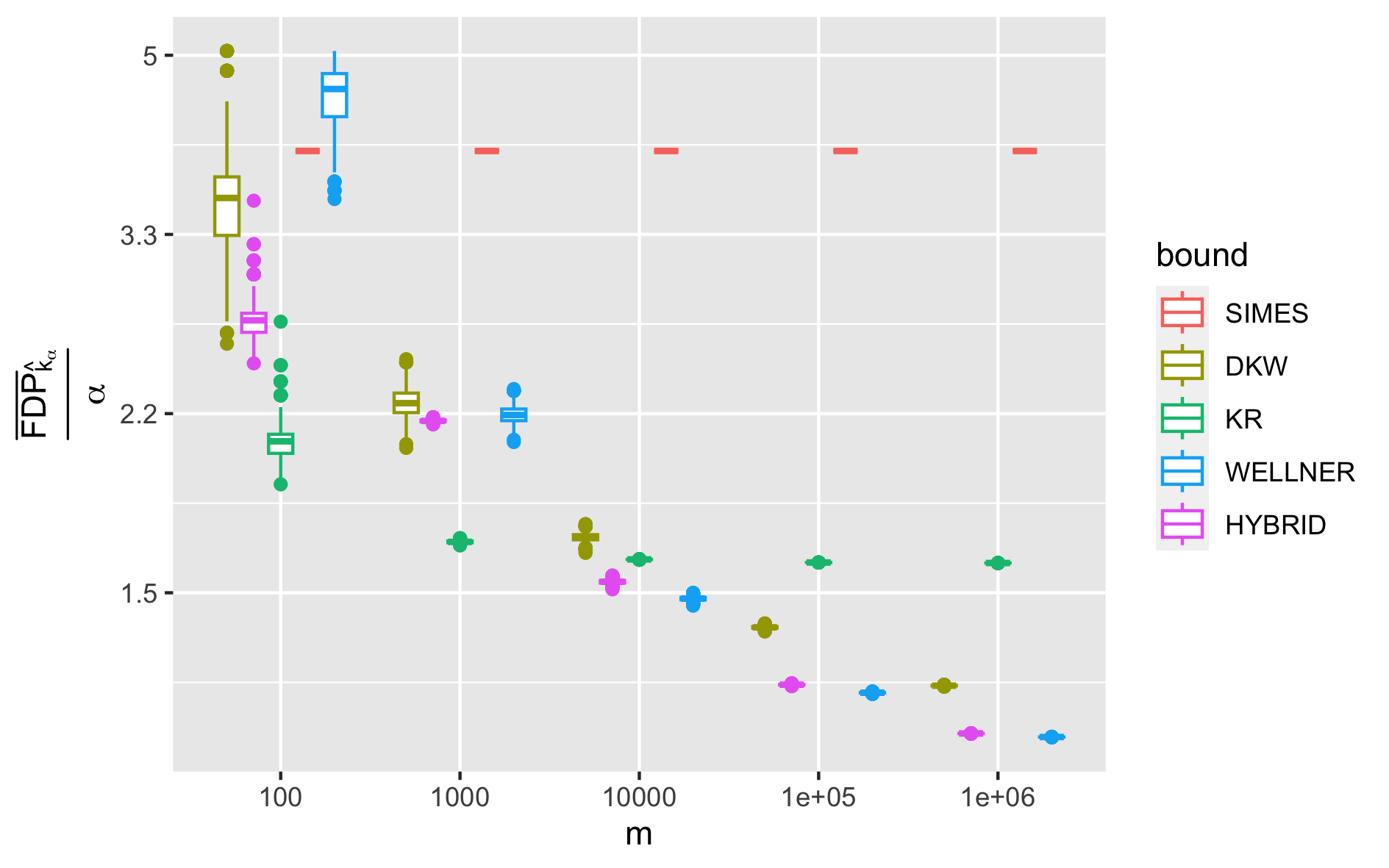}&\includegraphics[scale=0.12]{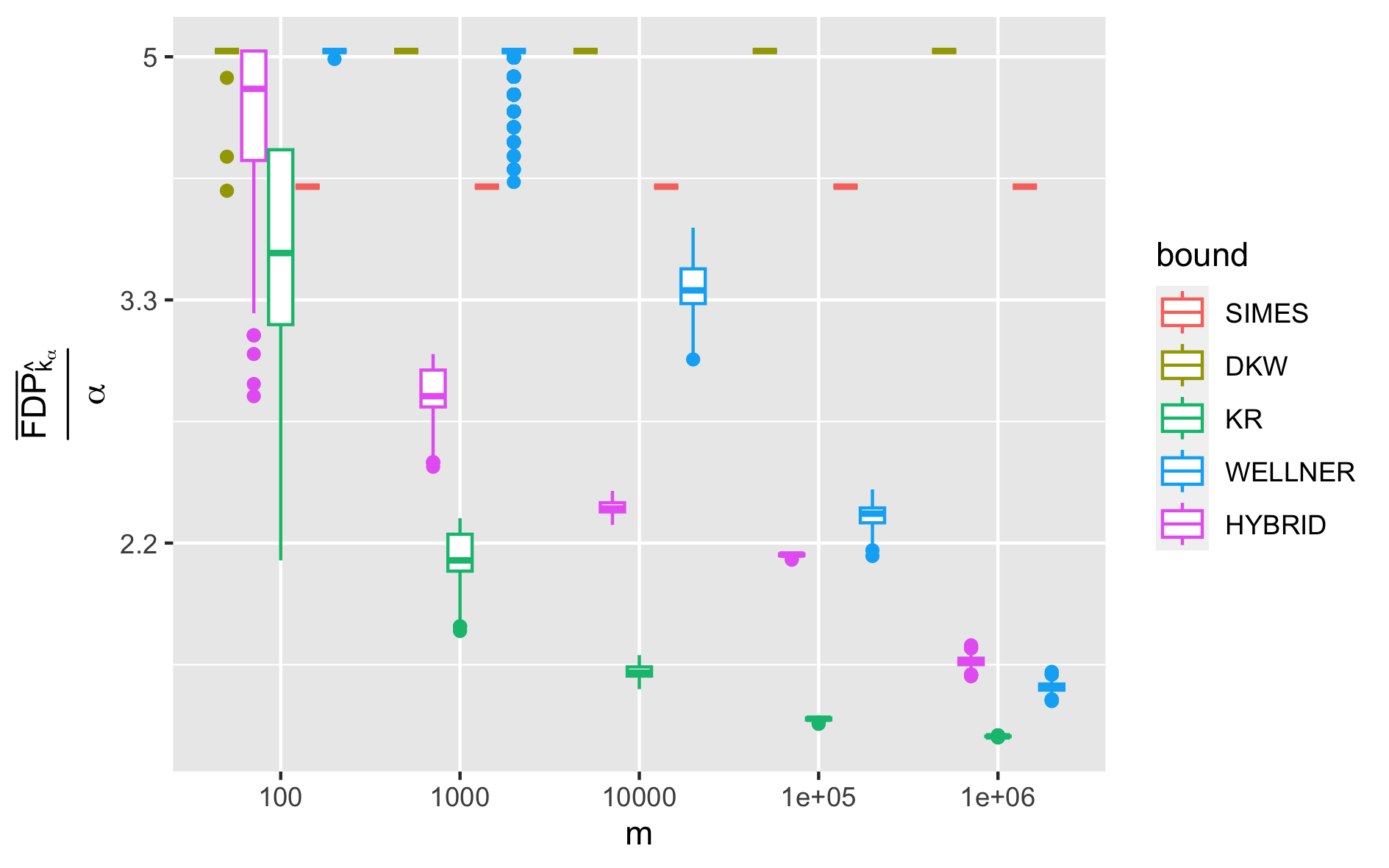}
\end{tabular}
\caption{Top-$k$ sparse case $\pi_0=1- 0.5 m^{-0.25}$, $\mu=\sqrt{2\log(m)}$  (left) $\pi_0= 1- 0.5 m^{-0.55}$, $\mu={10}$ (larger than $\sqrt{2\log (10^{6})}$)  (right), $\alpha=0.2$. \label{fig:topksparse}} 
\end{figure}
\end{center}

\begin{center}
\begin{figure}[h!]
\begin{tabular}{cc}
Non adaptive & Adaptive \\
\includegraphics[scale=0.12]{topk/dense_alpha_0_2.png}&\includegraphics[scale=0.12]{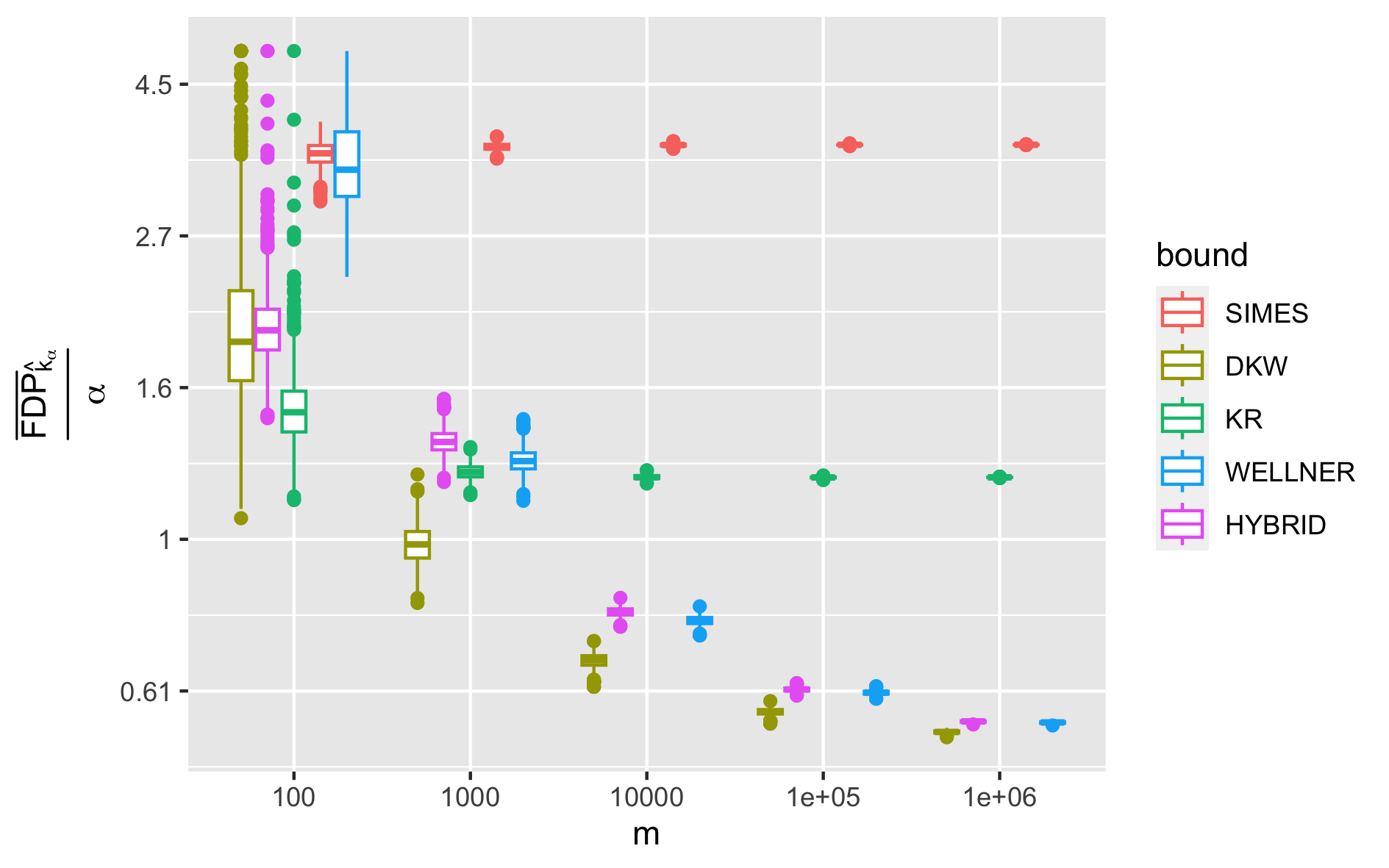}
\end{tabular}
\caption{Top-$k$ dense case with nonadaptive bounds (left) and adaptive bounds (right) ($\pi_0=0.5$, $\alpha=0.2$). \label{fig:topkadapt}}
\end{figure}
\end{center}

\begin{center}
\begin{figure}[h!]
\begin{tabular}{cc}
\includegraphics[scale=0.2]{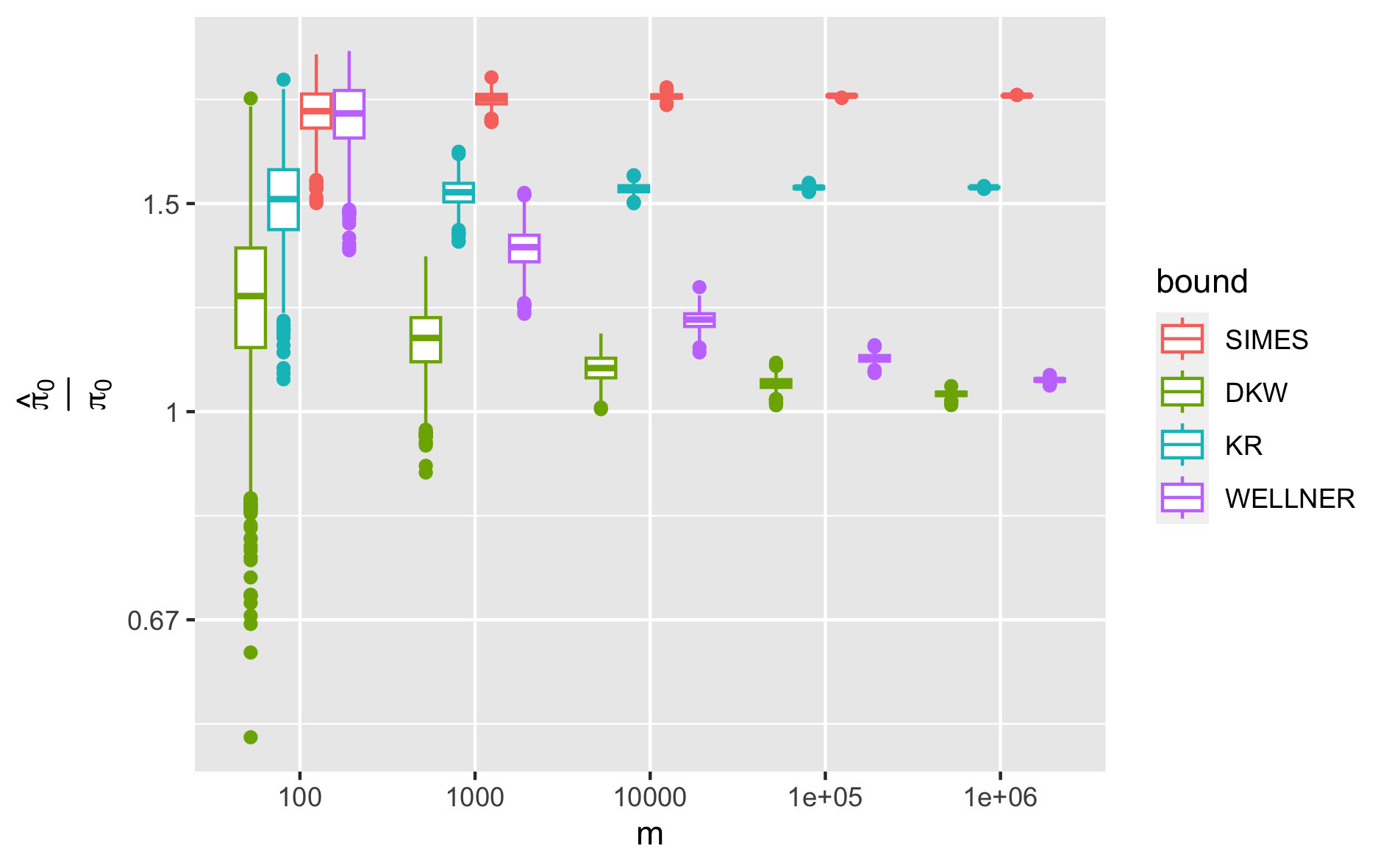}
\end{tabular}
\caption{Boxplots of the estimators $\hat{\pi}_0$ in the top-$k$ dense case ($\pi_0=0.5$, $\alpha=0.2$). 
\label{fig:topkadaptpi0chap}}
\end{figure}
\end{center}
 
  \begin{remark}
For clarity,  the bounds are displayed without the interpolation improvement \eqref{equ-interp3} (for top-$k$ and preordered). The figures are reproduced together with the interpolated bounds in Appendix~\ref{sec:addexp} for completeness. 
In a nutshell, the interpolation operation improves significantly the bounds mainly when they are not very sharp (typically small $m$ or very sparse scenarios). Hence, while it can be useful in practice, interpolation does not seem particularly relevant to study the consistency phenomenon. 
\end{remark}

 \subsection{Pre-ordered}

\begin{center}
\begin{figure}[h!]
\begin{tabular}{cc}
$\alpha=0.05$ & $\alpha=0.1$\\
\includegraphics[scale=0.12]{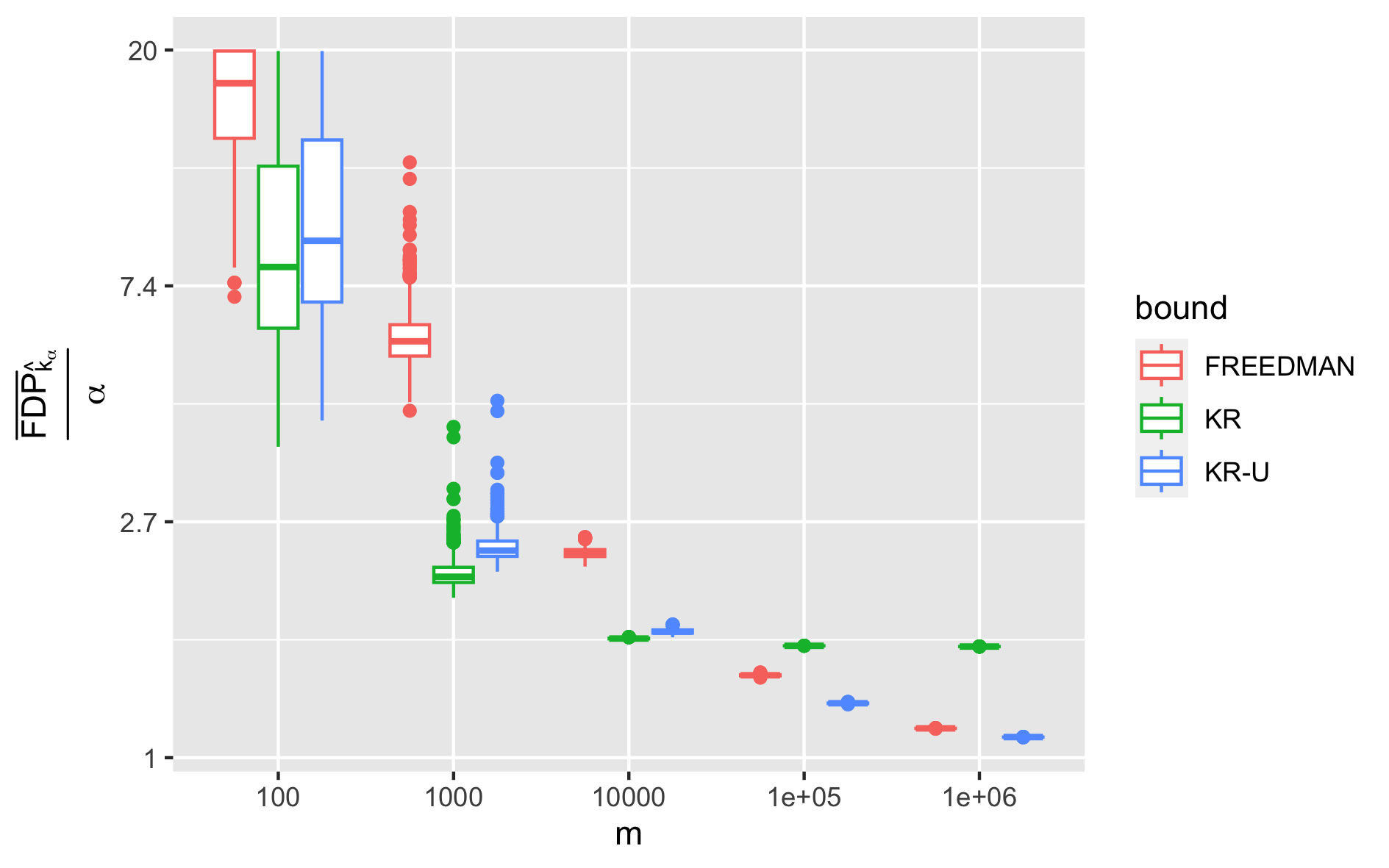}&\includegraphics[scale=0.12]{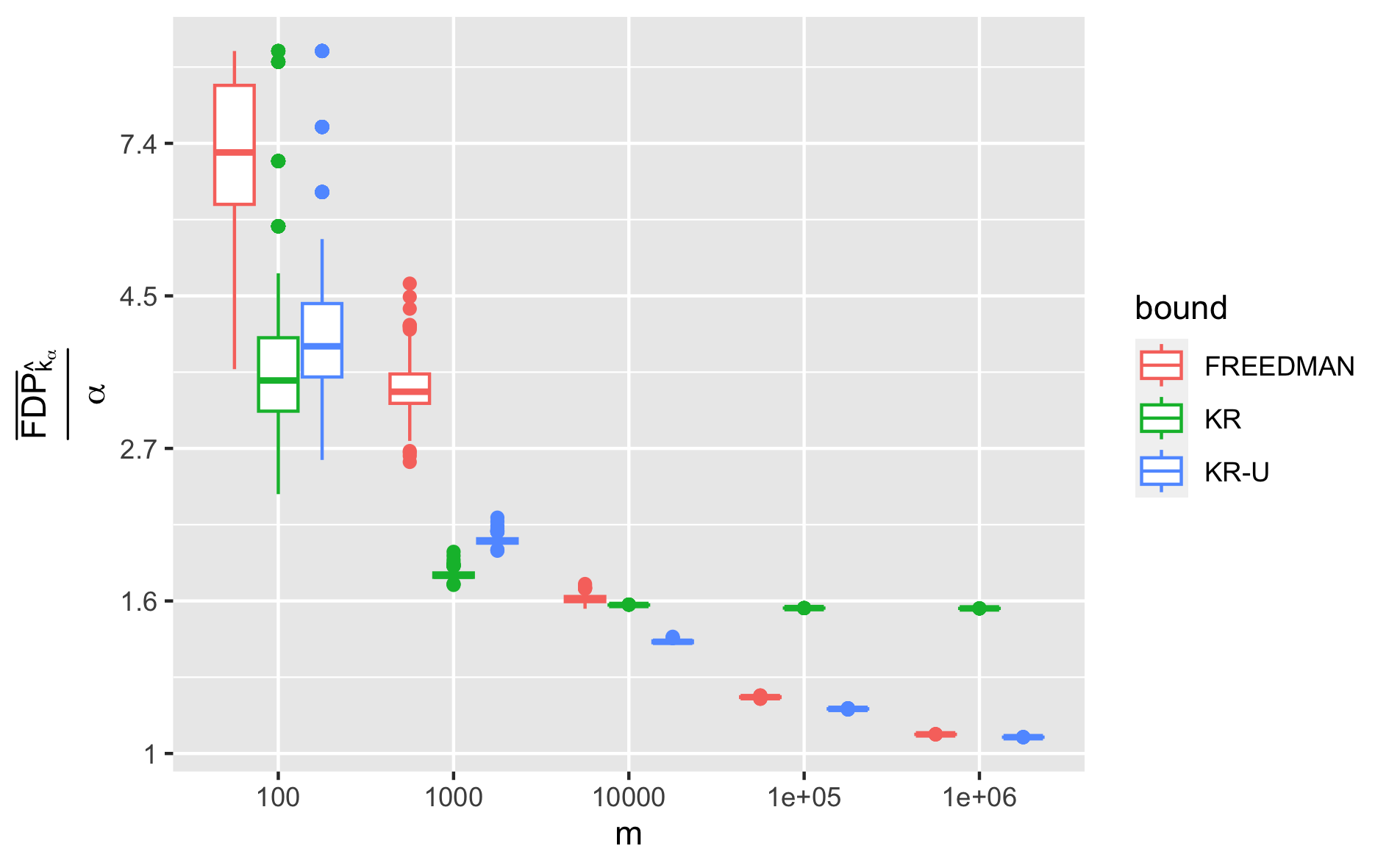}\\
$\alpha=0.15$ & $\alpha=0.2$\\
\includegraphics[scale=0.12]{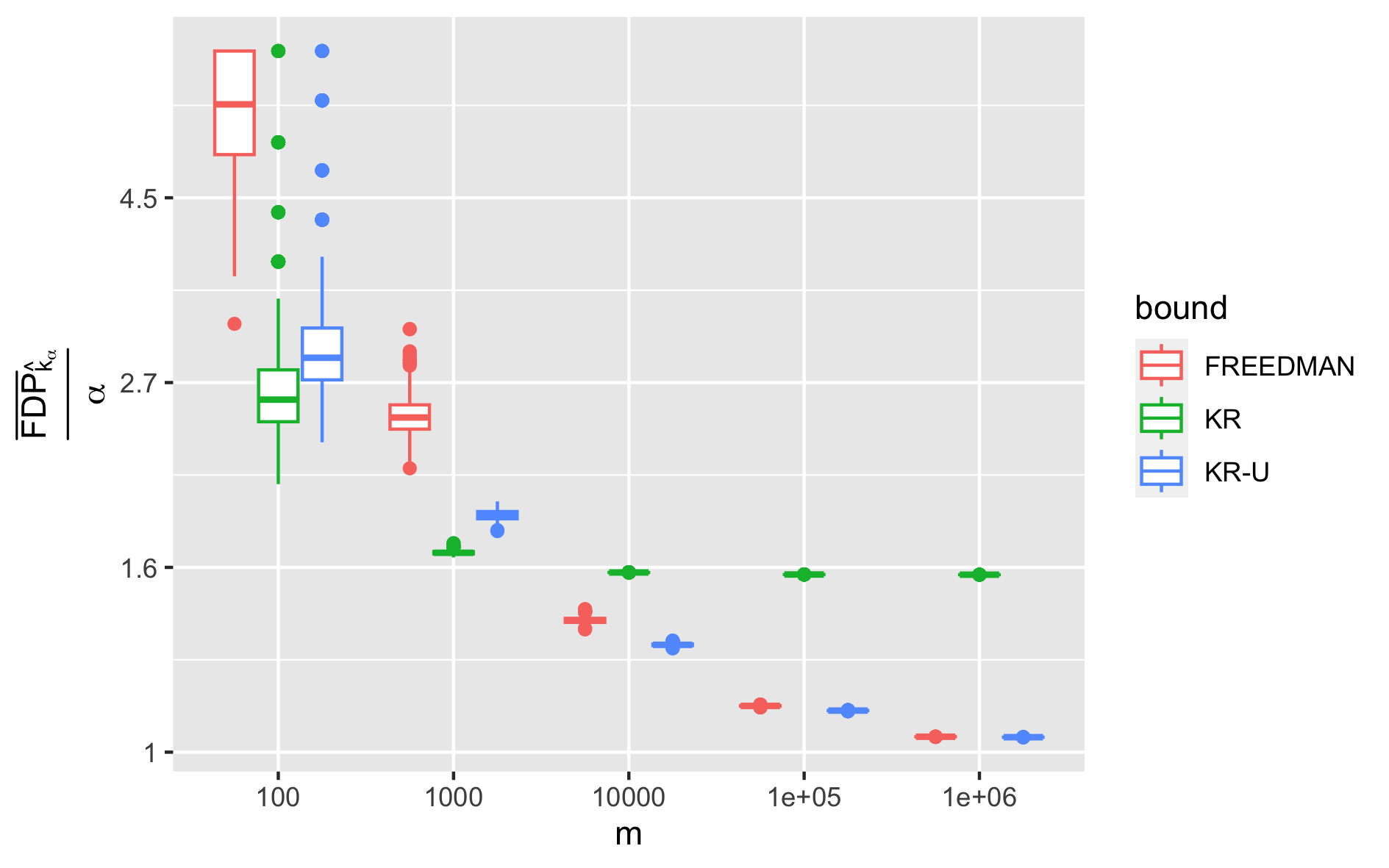}&\includegraphics[scale=0.12]{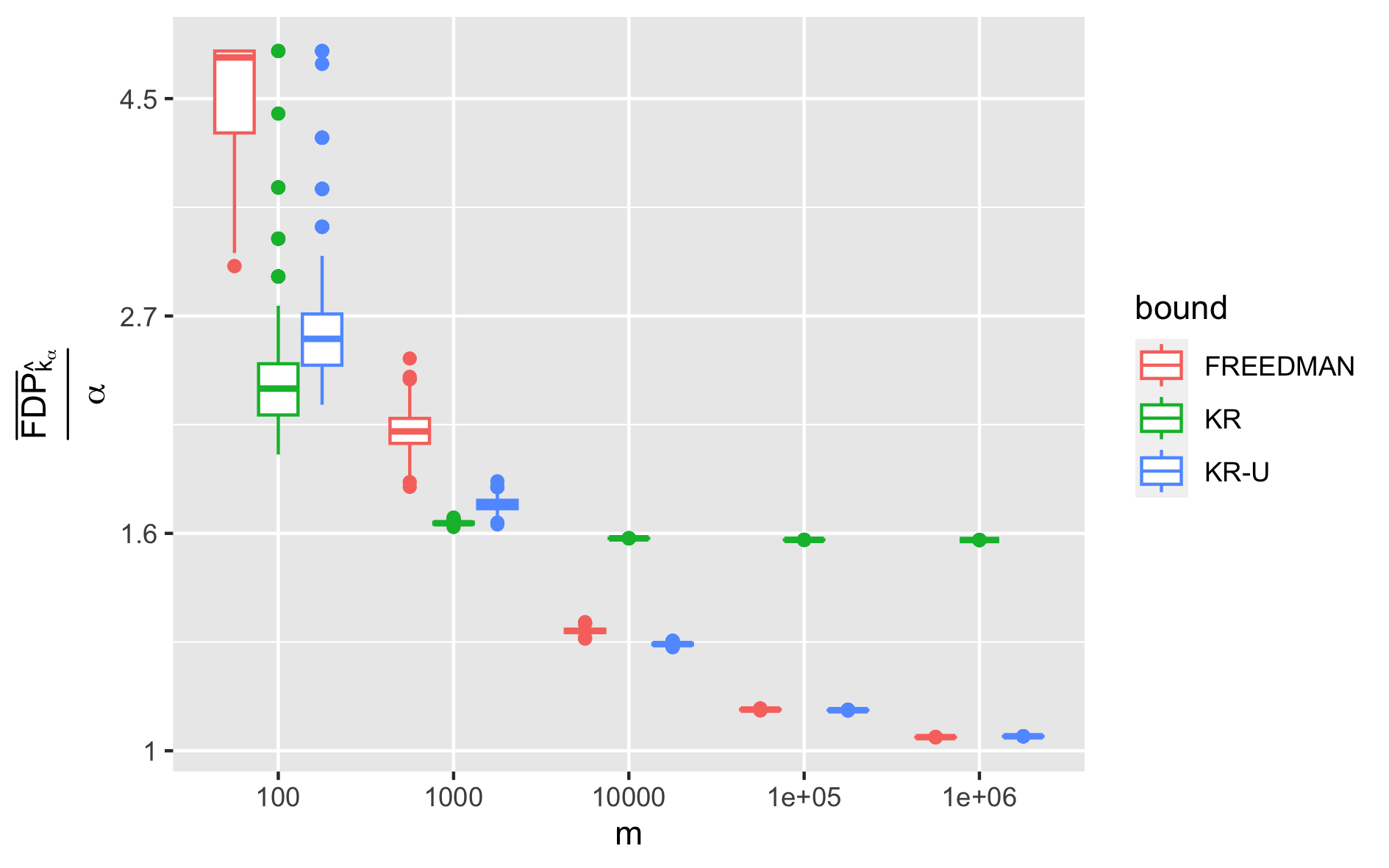}
\end{tabular}
\caption{Preordered dense ($\beta=0$) LF setting with LF procedure ($s=0.1 \alpha$, $\lambda=0.5$).\label{fig:preorderLFdense}}
\end{figure}
\end{center}
 
 We consider data generated as in the pre-ordered model presented in Section~\ref{sec:knockoffsetting} and more specifically as in the VCT model of Section~\ref{sec:powerpreordered}. The trueness/falseness of null hypotheses are generated independently, and the probability of generating an alternative is decreasing with the position $1\leq k\leq m$, and is given by $\pi(m^{\beta-1} k)$, where $\pi:[0,\infty)\to [0,1)$ is some function (see below) and $\beta\in[0,1)$ is the sparsity parameter. Once the process of true/false nulls is given, the $p$-values are generated according to either:  
 \begin{itemize}
 \item LF setting: $\pi(t)=\pi_{1} e^{-bt} \frac{b}{1 - e^{-b}}$, $t\geq 0$, so that $\Pi(1)=\pi_1$. Here $\pi_{1}$ is equal to $0.4$ and $b$, measuring the quality of the prior ordering, is equal to $2$. 
 In addition, the alternative $p$-values are one-sided Gaussian with $\mu=1.5$. Note that this is the setting considered in the numerical experiments of \cite{lei2016power}. 
 \item Knockoff setting: $\pi(t)=1/2 + (0 \vee 1/2 (\frac{z-t}{z-1}))$, $t\geq 0$, with $z>1$ a parameter that determines how slowly the probability of observing signal deteriorates, taken equal to $30$. Then, the binary $p$-values are as follows: under the null $p_i=1/2$ or $1$ with equal probability. Under the alternative, $p_i=1/2$ with probability $0.9$ and $p_i=1$ otherwise.
\end{itemize}
In both settings, the dense (resp. sparse) case refers to the sparsity parameter value $\beta=0$ (resp. $\beta=0.25$).

 We consider the bounds $\ol{\FDP}^{\mbox{\tiny KR}}_{\alpha}$ \eqref{LFpreorderedKR}, 
$\ol{\FDP}^{{\tiny \mbox{Freed}}}_\alpha$ \eqref{LFpreorderedFreedU} and
$\ol{\FDP}_\alpha^{{\tiny \mbox{KR-U}}} $ \eqref{LFpreorderedKRU} 
 for the LF procedure across different values of $(\lambda,s)\in \{(1/2,0.1\alpha),(1/2,1/2)\}$, $m\in \{10^{i},2\leq i\leq 6\}$, and $\alpha\in\{ 0.05,0.1,0.15,0.2\}$. The procedure LF with   $(\lambda,s)=(1/2,1/2)$ is referred to as the Barber and Cand\`es (BC) procedure.

\begin{center}
\begin{figure}[h!]
\begin{tabular}{cc}
$\alpha=0.05$ & $\alpha=0.1$\\
\includegraphics[scale=0.12]{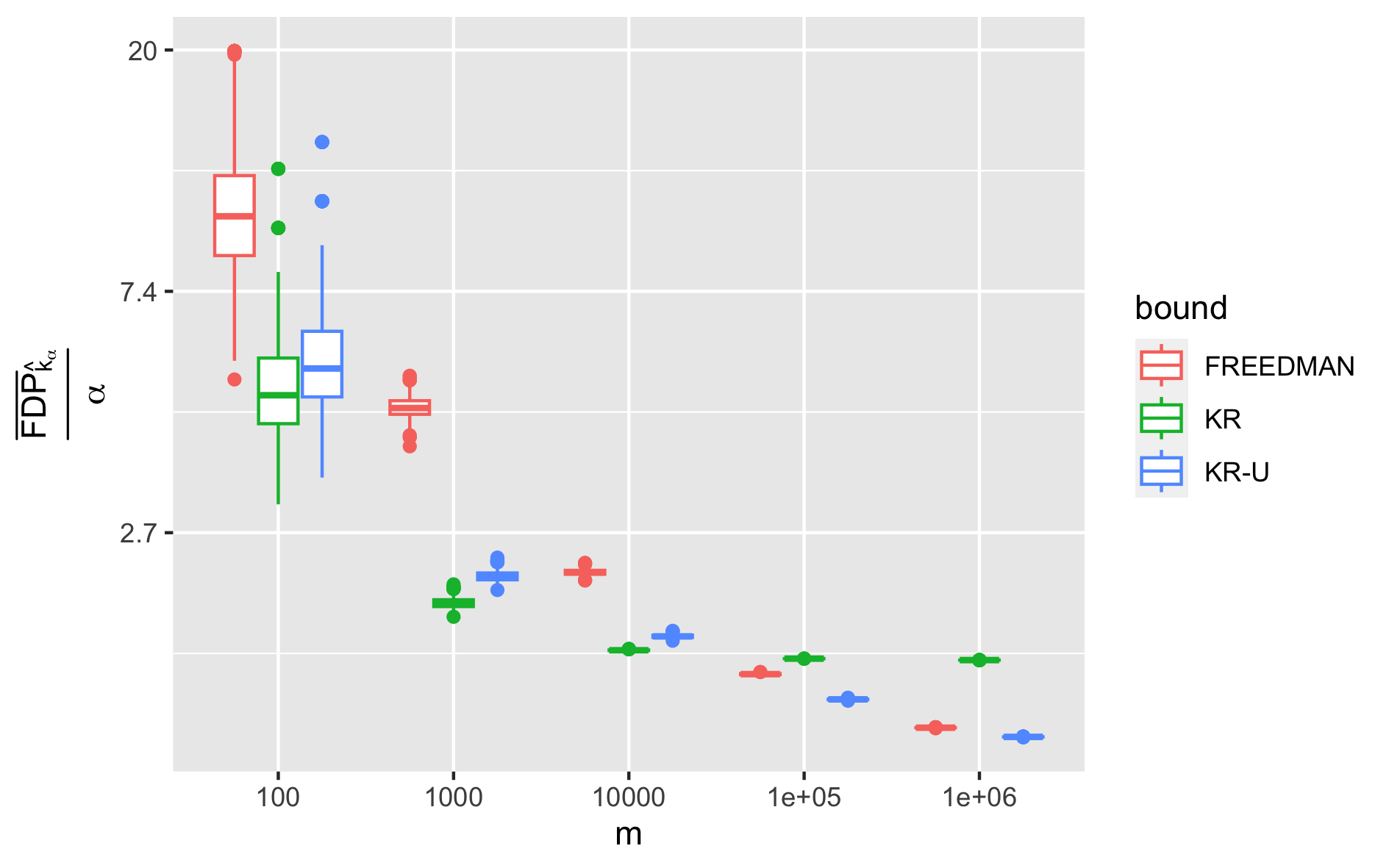}&\includegraphics[scale=0.12]{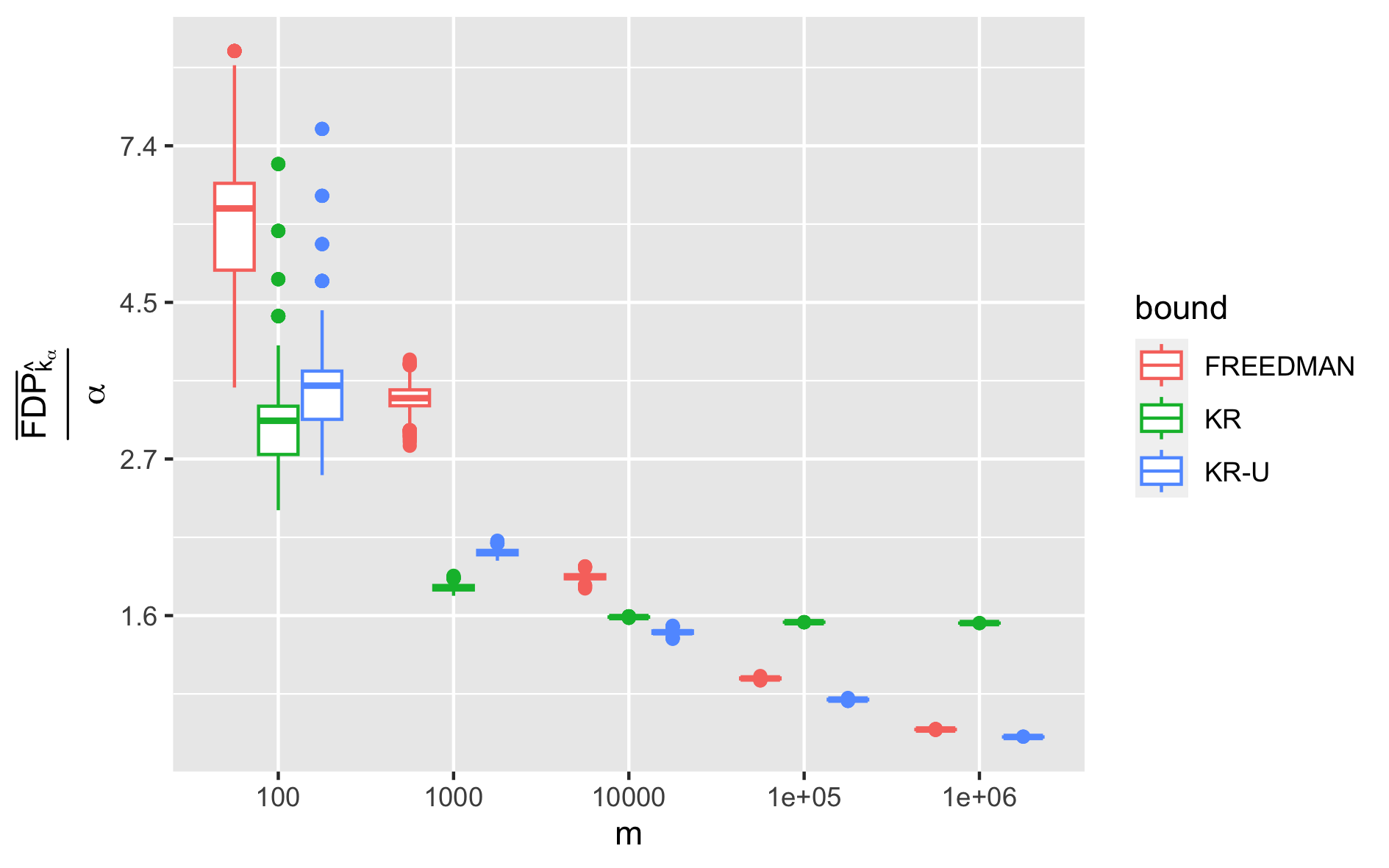}\\
$\alpha=0.15$ & $\alpha=0.2$\\
\includegraphics[scale=0.12]{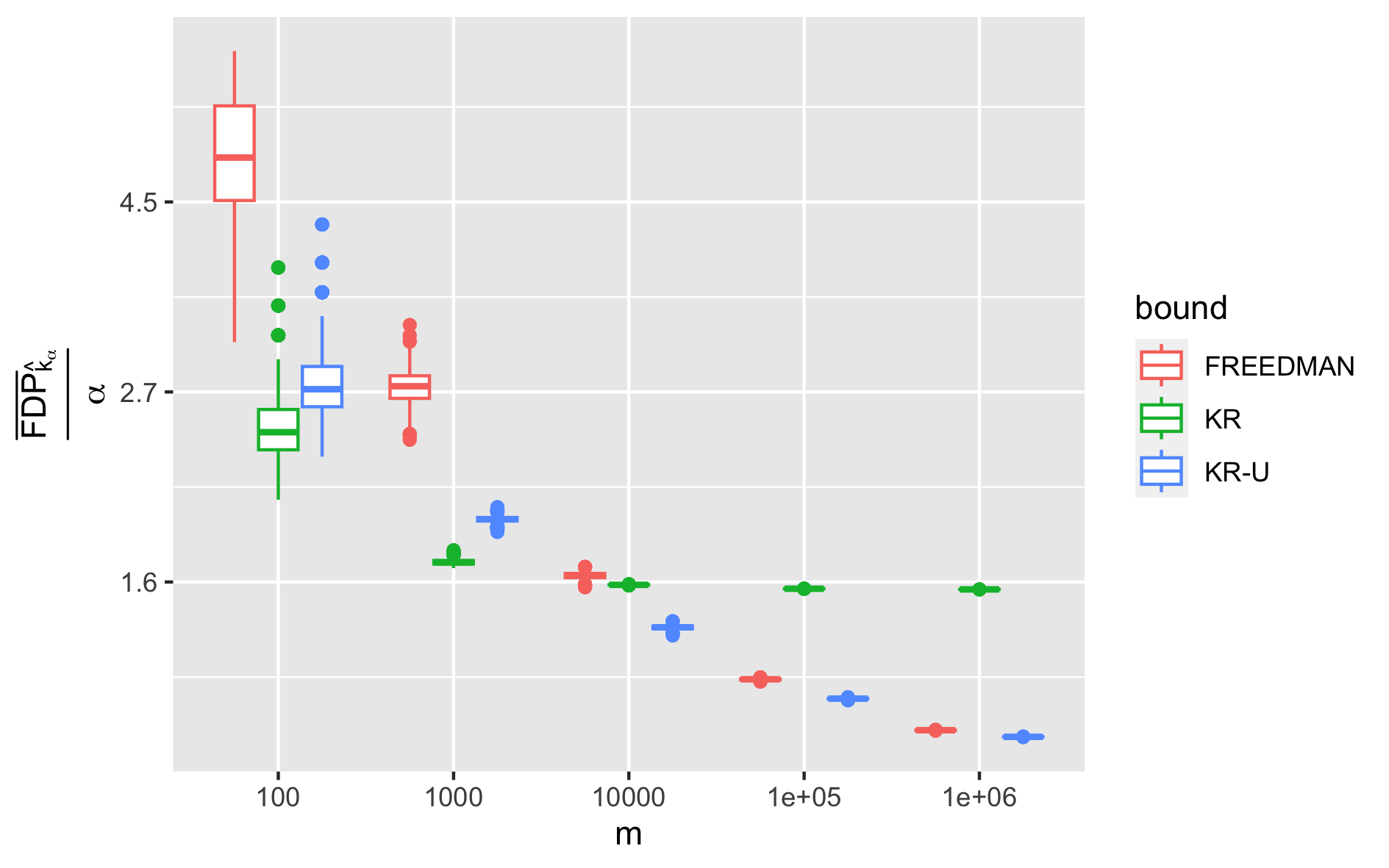}&\includegraphics[scale=0.12]{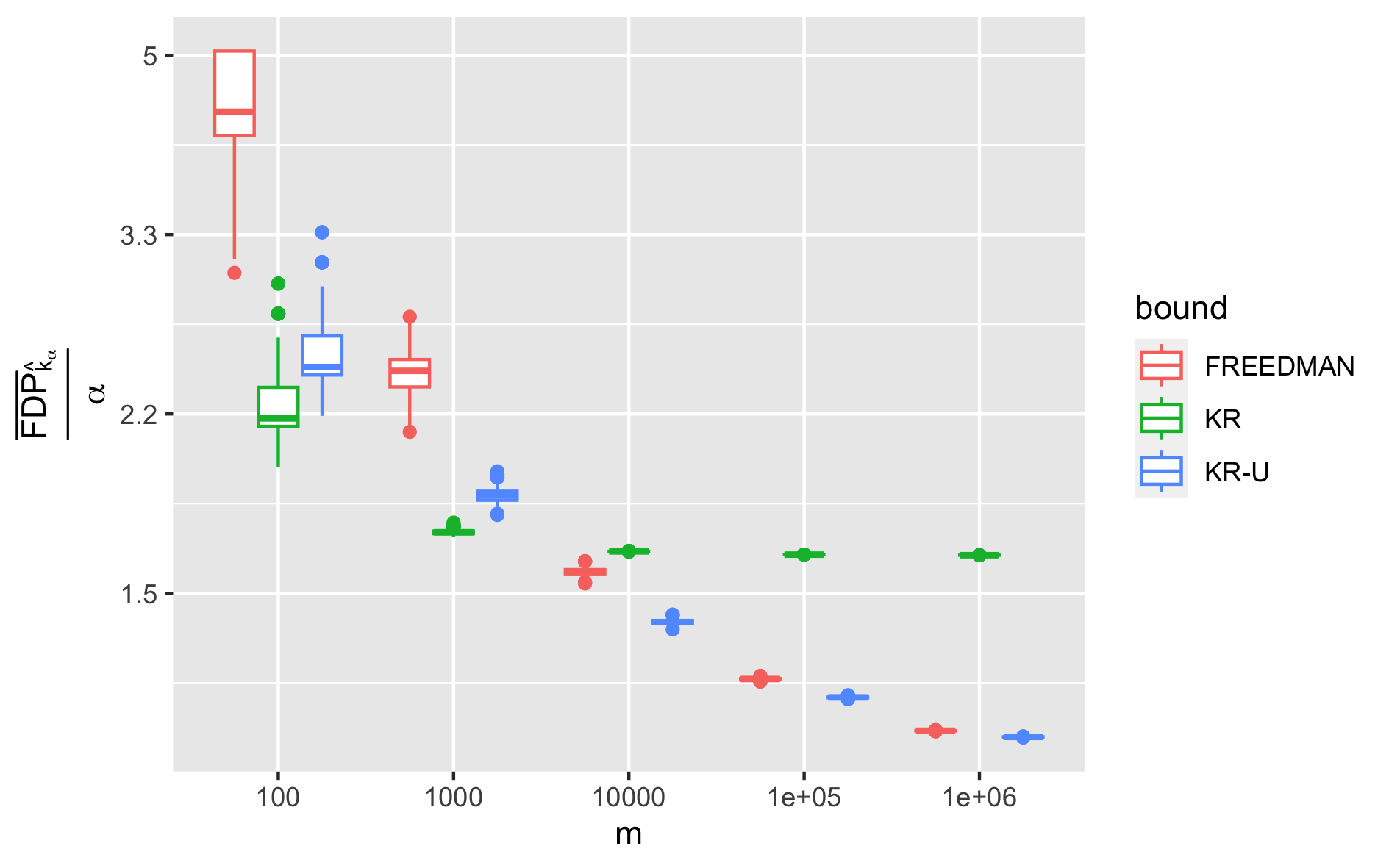}
\end{tabular}
\caption{Preordered sparse ($\beta=0.25$) LF setting with LF procedure ($s=0.1 \alpha$, $\lambda=0.5$). 
\label{fig:preorderLFsparse}}
\end{figure}
\end{center}

\begin{center}
\begin{figure}[h!]
\begin{tabular}{cc}
$\alpha=0.15$ & $\alpha=0.2$\\
\includegraphics[scale=0.12]{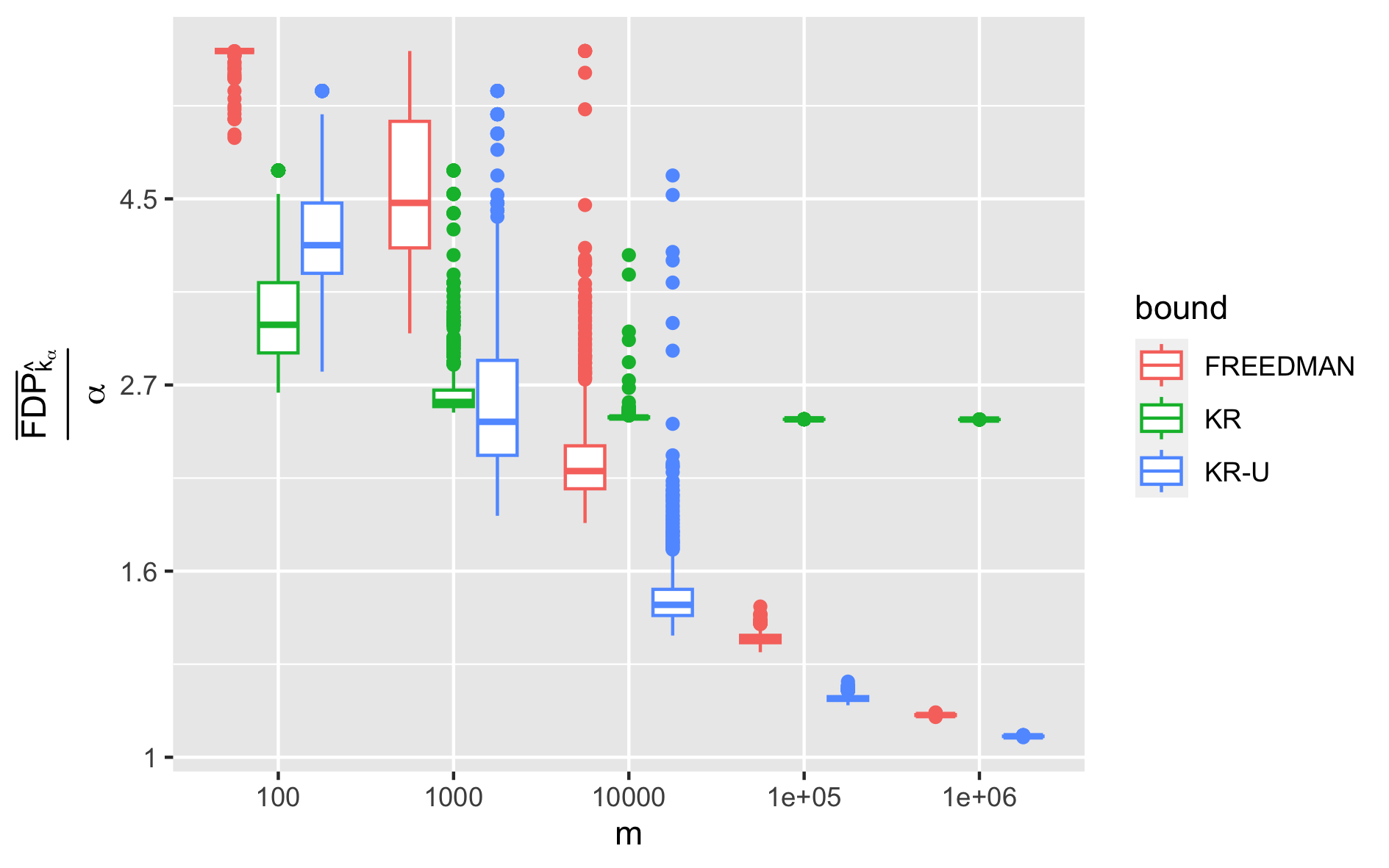}&\includegraphics[scale=0.12]{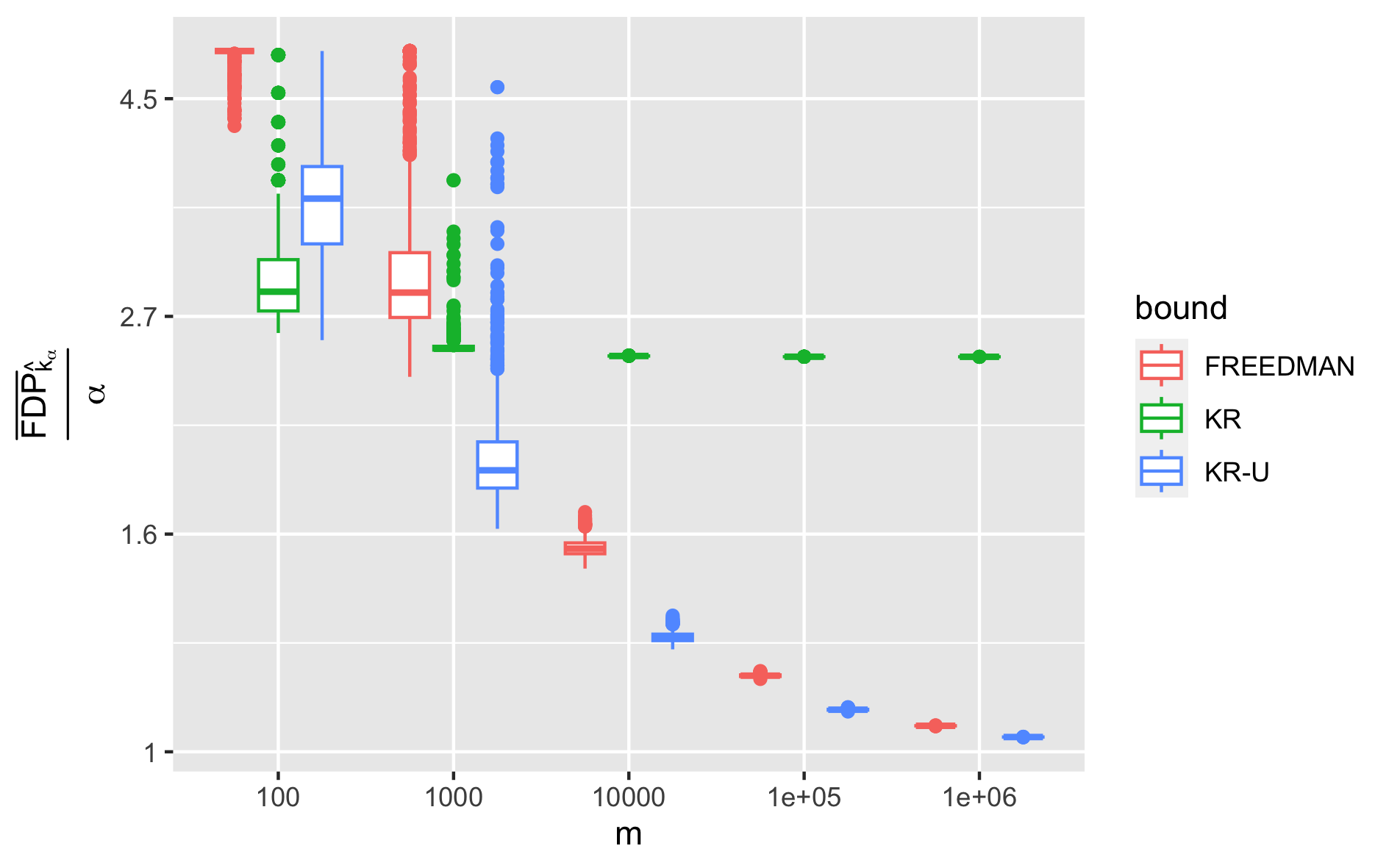}
\end{tabular}
\caption{Pre-ordered dense ($\beta=0$) knockoff setting with BC procedure (i.e., LF procedure with $s=\lambda=0.5$). \label{fig:preorderknockoff}}
\end{figure}
\end{center}
Figure~\ref{fig:preorderLFdense} displays the boxplots of these FDP bounds for  the LF procedure with $(\lambda,s)=(1/2,0.1\alpha)$ in the LF setting with  $\beta=0$ (dense case). It is apparent that KR is not consistent, while the new bounds Freedman and KR-U are. Also, the bound KR-U is overall the best, losing almost nothing w.r.t. KR when the number of rejections is very small (say $m=100$ and $\alpha=0.05$ or $0.1$) and making a very significant improvement otherwise. Similar conclusions hold for the case of BC procedure, see Figure~\ref{fig:preorderknockoff}. 
Next, to stick with a very common scenario, we also investigate the sparse situation where the fraction of signal is small in the data, see Figures~\ref{fig:preorderLFsparse}~and~\ref{fig:preorderknockoffsparse}. As expected, while the conclusion is qualitatively the same, the rejection number gets smaller so that the consistency is  reached for largest values of $m$ (i.e., the convergence is `slowed down').

\begin{center}
\begin{figure}[!]
\begin{tabular}{cc}
$\alpha=0.15$ & $\alpha=0.2$\\
\includegraphics[scale=0.12]{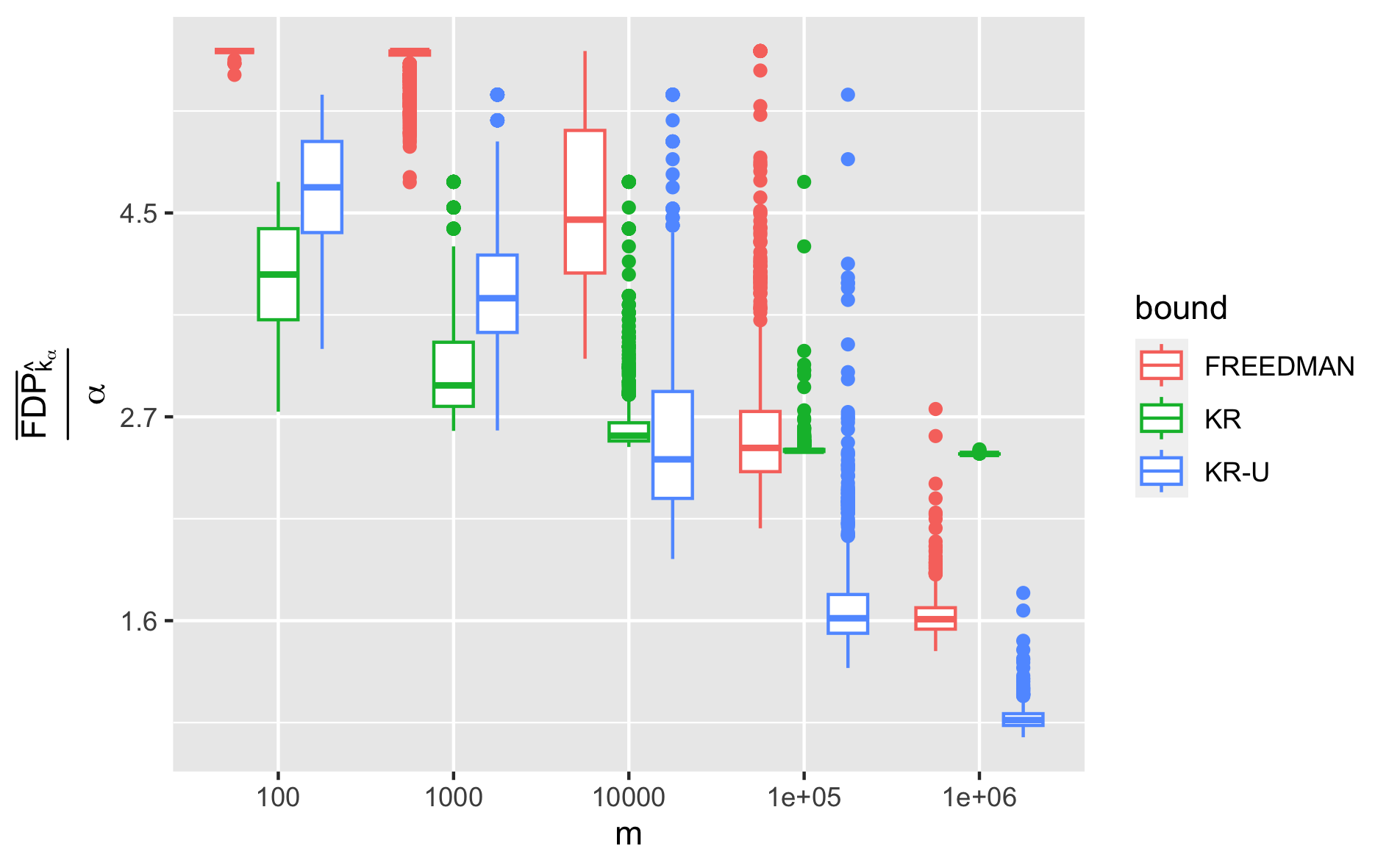}&\includegraphics[scale=0.12]{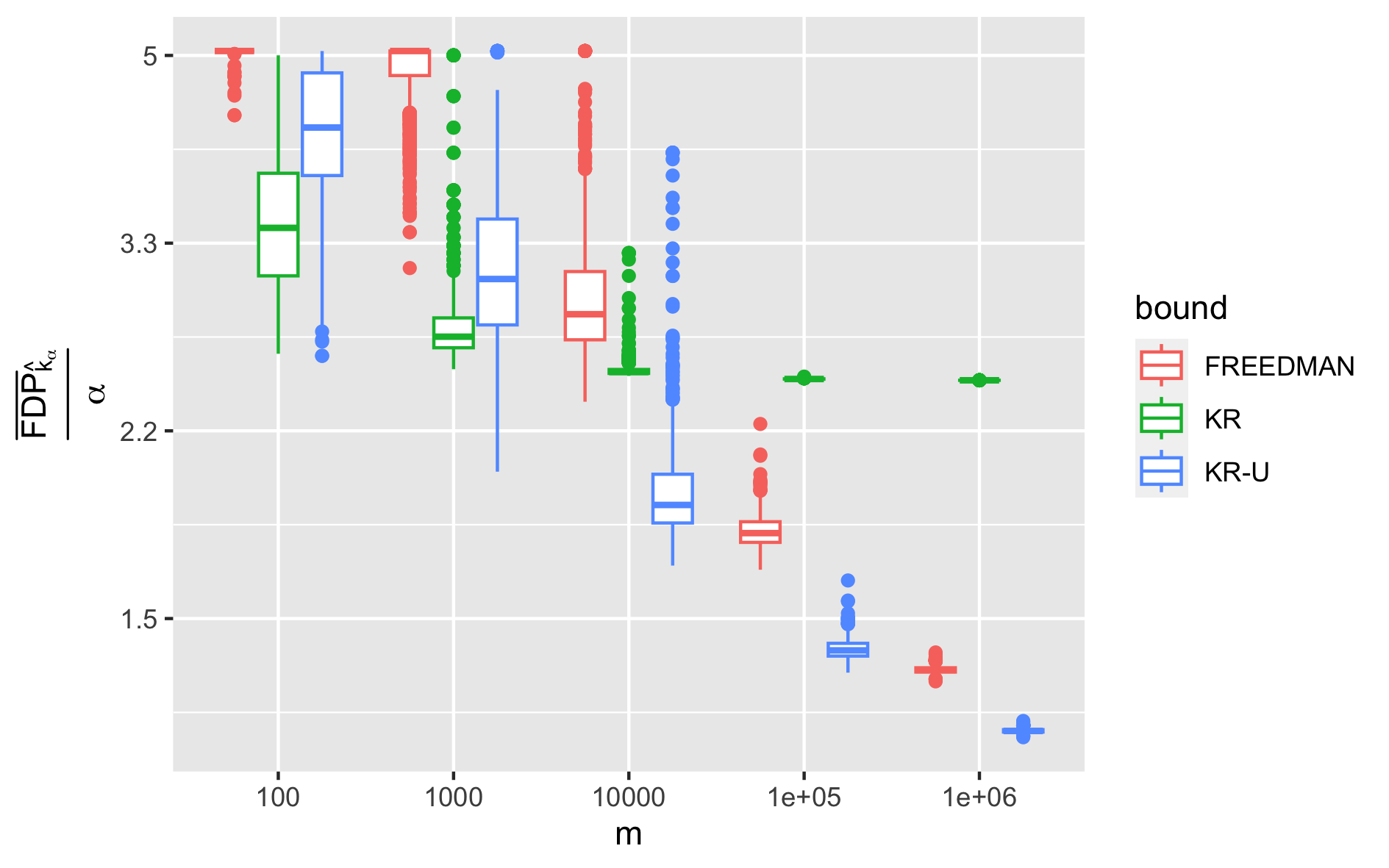}
\end{tabular}
\caption{Pre-ordered sparse ($\beta=0.25$) knockoff setting with BC procedure (i.e., LF procedure with $s=\lambda=0.5$). \label{fig:preorderknockoffsparse}}
\end{figure}
\end{center}

 \subsection{Online}
 
 We now consider the online case, by applying our method to the real data example coming  from the International Mice Phenotyping Consortium (IMPC) \citep{munozfuentes}, which is a consortium interested in the genotype effect on the phenotype. This data is collected in an online fashion for each gene of interest and is classically used in online detection works (see \cite{ramdas2017online} and references therein). 
 
 Figure~\ref{fig:onlineimpc} displays the FDP time-wise envelopes $k\mapsto \ol{\FDP}^{\mbox{\tiny KR}}_{\alpha,k}$ 
\eqref{onlineKRLORD}, $k\mapsto \ol{\FDP}^{{\tiny \mbox{Freed}}}_{\alpha,k}$ \eqref{onlineFreedU} and 
$k\mapsto \ol{\FDP}^{{\tiny \mbox{KR-U}}}_{\alpha,k}$ \eqref{onlineKRU}, for the LORD procedure \eqref{equalphaLORD} ($W_0=\alpha/2$ with the spending sequence $\gamma_k= k^{-1.6}$, $k\geq 1$). 
 As we can see, the Freedman and KR-U envelopes both tend to the nominal level $\alpha$, as opposed to the KR  envelope, which is well expected from the consistency theory. 
In addition, KR-U seems to outperform the Freedman envelope and while KR is (slightly) better than KR-U in the initial segment of the process $(k<300)$, we can see that KR-U gets rapidly more accurate.
 
 \begin{center}
\begin{figure}[h!]
\begin{tabular}{c}
\includegraphics[scale=0.18]{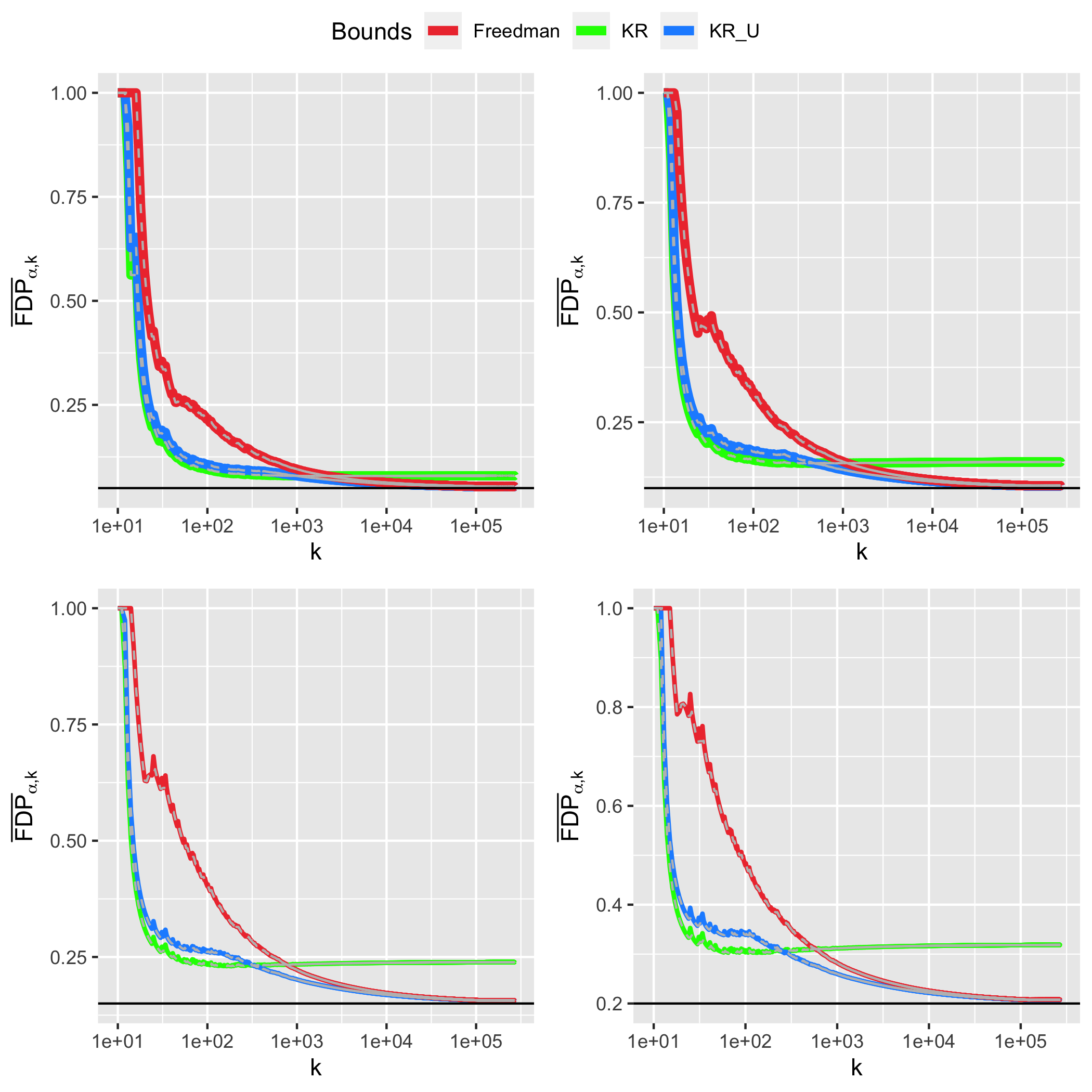}
\end{tabular}
\caption{Online FDP envelopes for LORD applied on IMPC data for four values of $\alpha\in \{0.05,0.1,0.15,0.2\}$ (horizontal black bars). The interpolated bounds are displayed for each procedure as a gray dashed line. \label{fig:onlineimpc}}
\end{figure}
\end{center}

\subsection{Comparison to \cite{li2024simultaneous}}\label{sec:comparli}

In this section, we compare the performances of the KR-U bound with respect to the recent bounds proposed in  \cite{li2024simultaneous}. For this, we reproduce the high dimensional Gaussian linear regression setting of Section~5.1 (a) therein, which generates binary $p$-values by applying the fixed-$X$ `sdp' knockoffs and the signed maximum lambda knockoff statistic of \cite{barber2015controlling}. Doing so, the $p$-values follow the preordered setting of Section~\ref{sec:knockoffsetting} and thus our bounds are non-asymptotically valid (note however that the $p$-values do not follow strictly speaking the VCT model of Section~\ref{sec:powerpreordered}). To be more specific, the considered Gaussian linear model $Y\sim \mathcal{N}(X\beta, I_n)$ is obtained by first generating $X$ and $\beta$ as follows: the correlated design matrix $X$ of size $n\times m$ is obtained by drawing $n=1500$ i.i.d. samples from the multivariate $m$-dimensional distribution $\mathcal{N}_m(0,\Sigma)$ where $\Sigma_{i,j}=0.6^{|i-j|}$, $1\leq i,j\leq m$; the signal vector $\beta\in \R^m$ is obtained by first randomly sampling a subset of $\{1,\dots,m\}$ of size $\lfloor \pi_1 m\rfloor$ for the non-zero entries of $\beta$ and then by setting all non-zero entries of $\beta$ equal to $a/\sqrt{n}$ for a given amplitude $a>0$.

 \begin{center}
\begin{figure}[h!]
\includegraphics[scale=0.17]{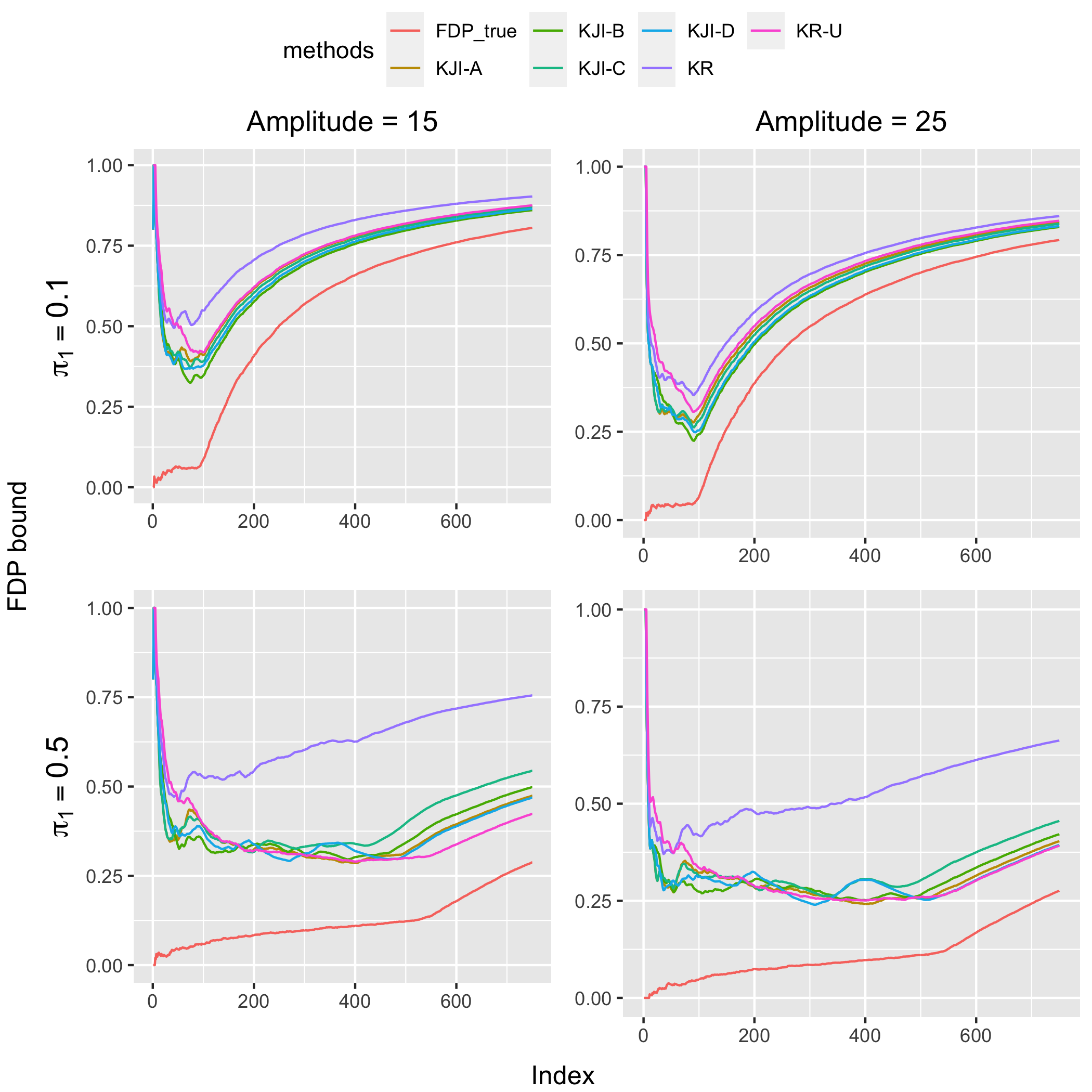}
\caption{Comparing the envelope $\wt{\FDP}_k^{{\tiny \mbox{KR-U}}}$, $k\geq 1$ given by \eqref{equ-interp3}-\eqref{preorderedKR} ($s=\lambda=0.5$)  to those of \cite{li2024simultaneous} in the Gaussian linear regression setting of Section~\ref{sec:comparli} for $m=1000$ (see text for more details). \label{fig:knockoffli}}
\end{figure}
\end{center}

First, in the spirit of Figure~3 in \cite{li2024simultaneous}, we display in Figure~\ref{fig:knockoffli} the envelope $(\wt{\FDP}_k^{{\tiny \mbox{KR-U}}}, k\geq 1)$ given by the interpolation \eqref{equ-interp3} of the envelope $(\ol{\FDP}_k^{{\tiny \mbox{KR-U}}}, k\geq 1)$ defined by \eqref{preorderedKR} (with $s=\lambda=1/2$), and compare it to those obtained in \cite{li2024simultaneous} (namely, KJI A/B/C/D) for $\pi_1\in \{0.1,0.5\}$, $a\in\{15,25\}$. We also set here $\delta=0.05$ to stick with the choice of  \cite{li2024simultaneous} (note that this requires to further calibrate the parameters of their method according to this value of $\delta$) and the number of replications is here only taken equal to $10$ for computational reasons. Markedly, the KR-U envelope becomes much better than KR and is competitive 
w.r.t.  KJI A/B/C/D, at least when $k$ is moderately large. 
As expected, the most favorable case for KR-U is when the signal has a large amplitude and is dense. 

Second, to stick with the consistency-oriented plots of the previous sections, we also display the corresponding FDP bounds for the BC procedure at level $\alpha\in\{0.15,0.2\}$ in Figure~\ref{fig:knockoffli_consist}. The conclusions are qualitatively similar.
 
\begin{center}
\begin{figure}[h!]
\includegraphics[scale=0.17]{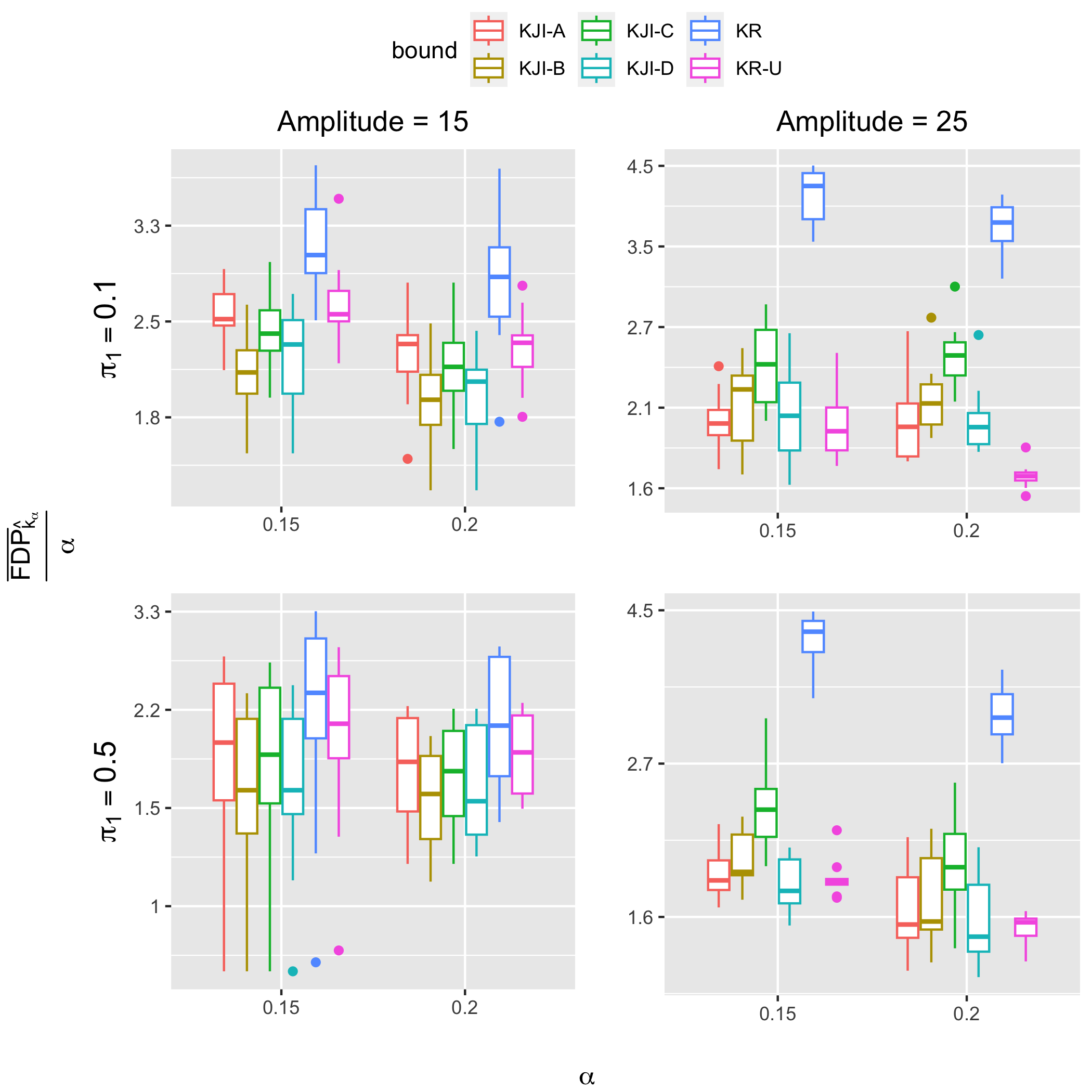}
\caption{Comparing the FDP bound $\wt{\FDP}_{\hat{k}_\alpha}^{{\tiny \mbox{KR-U}}}$ for $\hat{k}_\alpha$ the BC procedure \eqref{kchapeauLF} ($s=\lambda=0.5$) to those of \cite{li2024simultaneous} with respect to $\alpha\in\{0.15,0.2\} $ in the Gaussian linear regression setting of Section~\ref{sec:comparli} for $m=1000$ (see text for more details). \label{fig:knockoffli_consist}}
\end{figure}
\end{center}

\section{Conclusion}

The main point of this paper is to provide another point of view on FDP confidence bounds: we introduced a notion of $m$-consistency, 
  {a
  desirable asymptotical property which should act as a guiding principle
  when building such bounds,} by ensuring that the bound is sharp enough on particular FDR controlling rejection sets. Doing so, some previous bounds were shown to be inconsistent, including the {original} KR bounds. {While some other known FDP confidence bounds, in particular based on the DKW inequality, are $m$-consistent under certain assumptions, we have introduced new ones shown to satisfy this condition under more general conditions (in particular
high sparsity). New bounds based on the classical} Wellner/Freedman inequalities showed interesting behaviors, however simple modifications of KR bounds Hybrid/KR-U {by `stitching'} have been shown to be the most efficient, both asymptotically and for moderate sample size.

Overall, this work shows that $m$-consistency is a simple and fruitful criterion, and we believe that using it will be beneficial in the future to make wise choices among the rapidly increasing literature on FDP bounds.

\section*{Acknowledgements}

GB acknowledges support from:
Agence Nationale de la Recherche (ANR), ANR-19-CHIA-0021-01 (BiSCottE),
ANR-21-CE23-0035 (ASCAI), and IDEX REC-2019-044;
Deutsche Forschungsgemeinschaft (DFG) - SFB1294 - 318763901.

IM and ER have rbeen supported by ANR-21-CE23-0035 (ASCAI) and by the GDR ISIS through 
the `projets exploratoires' program (project TASTY). 
It is part of project DO 2463/1-1, funded by the Deutsche Forschungsgemeinschaft.

\bibliography{biblio} 

 \appendix

\section{Proofs}

\subsection{Proof of Proposition~\ref{cor:wellner}}\label{sec:newecdf}

For $j\geq 1$, let $\delta_j=\delta j^{-2}$, $\tau_j=2^{-j}$ and
\begin{align*}
A_j&=\left\{\forall t \in [\tau_j,1], \: n^{-1} \sum_{i=1}^n \ind{p_i\leq t} \leq t\:  \lambda_j\right\};\\
\lambda_j &= h^{-1}\left(\frac{\log(1/\delta_j)}{n \tau_j /(1-\tau_j) }\right) ,
\end{align*}
so that by Wellner's inequality, we have $\P(A_j)\geq 1-\delta_j$ and with a union bound $\P(\cap_{j\geq 1} A_j)\geq 1-\delta \pi^2/6$.
Now let $t\in (0,1)$ and $j_0=\min\{j\geq 1\::\: t\geq \tau_j\} = \min\{j\geq 1\::\: j \geq \log_2(1/t)\}$, so that $j_0=\lceil  \log_2(1/t) \rceil \geq 1$. 
This yields
$$
\log(1/\delta_{j_0}) = \log(1/\delta) + 2 \log\left(\lceil  \log_2(1/t) \rceil\right).$$
On the event $\cap_{j\geq 1} A_j$, we have, since $t\in [\tau_{j_0}, 1]$ by definition,
$$
n^{-1} \sum_{i=1}^n \ind{p_i\leq t} \leq t\:  \lambda_{j_0} =t  h^{-1}\left(\frac{\log(1/\delta_{j_0})}{n \tau_{j_0} /(1-\tau_{j_0})}\right)= t h^{-1}\left(\frac{\log(1/\delta) + 2 \log\left(\lceil  \log_2(1/t) \rceil\right)}{n g(t) }\right),
$$
because 
$\tau_{j_0}=2^{-\lceil  \log_2(1/t) \rceil}$.
The result then comes from replacing $\delta$ by $\delta 6/\pi^2$.

\subsection{Proof of Theorem~\ref{th:BHasymp}}\label{sec:proof:th:BHasymp}

First let  $F_m(t)=\ol{\Phi}(\ol{\Phi}^{-1}(t)-\mu_m)$, $\Psi_m(t)=F_m(t)/t$ and observe that $\Psi_m$ is continuous decreasing on $(0,1]$ with $\lim_0 \Psi_m=+\infty$. This implies that $t^*_m, t^\sharp_m\in (0,1)$ as described in the statement both exist, with 
$$t^*_m=\Psi_m^{-1}(\tau_m(\alpha/2)),\:\: t^\sharp_m=\Psi_m^{-1}(\tau_m(2\alpha)), \:\:\tau_m(\alpha)=\frac{m}{m_1} \left(\frac{1}{\alpha}-\frac{m_0}{m}\right).$$ 
We first establish 
\begin{align}
t^*_m \gtrsim m_1/m\label{equtmstar}\\
t^\sharp_m \lesssim m_1/m\label{equtsharp}.
\end{align}
 If $\beta=0$, then $m_0/m=1-c$, $m_1/m=c$, $\mu_m=b$, $\tau_m>0$, $F_m(t)=\ol{\Phi}(\ol{\Phi}^{-1}(t)-b)$, $\Psi_m(t)=\ol{\Phi}(\ol{\Phi}^{-1}(t)-b)/t$, $\tau_m(\alpha)$ all do not depend on $m$. Hence, $t^*_m$ and $t^\sharp_m$ are both constant, which establishes \eqref{equtmstar} and \eqref{equtsharp}.
Let us now turn to the sparse case, for which $\beta\in (0,1)$. The inequality \eqref{equtsharp} follows from the upper bound
$$
0.5 t^\sharp_m/\alpha=G_m(t^\sharp_m)\leq  t^\sharp_m + m_1/m.
$$
For \eqref{equtmstar}, the analysis is slightly more involved. We first prove that  for $m$ large enough
\begin{equation}\label{toprovelemmatopk}
\ol{\Phi}^{-1}(t_m^*)\leq \mu_m-b.
\end{equation}
This will establish \eqref{equtmstar}, since it implies $F_m(t_m^*)\geq F_m(\ol{\Phi}(\mu_m-b))= \ol \Phi(-b)>0$ and also $t_m^*=(\tau_m(\alpha/2))^{-1} F_m(t^*_m)\gtrsim m_1/m$.
On the one hand,
\begin{align*}
\Psi_m(\ol \Phi(\mu_m-b)) = \frac{\ol \Phi(-b)}{\ol \Phi(\mu_m-b)} \geq \ol \Phi(-b)\frac{\mu_m-b}{ \phi(\mu_m-b)}  = \Phi(-b)m^{\beta}\sqrt{2\beta\log m} 
\end{align*}
because $\mu_m-b=\sqrt{2\beta\log m}$ and $\phi(\mu_m-b)= m^{-\beta}$, and by using $\Phi(x)\leq \phi(x)/x$ for all $x>0$.
On the other hand, 
\begin{align*}
\Psi_m(t_m^*)=\tau_m(\alpha/2)\leq \frac{2}{\alpha} m^{\beta} .
\end{align*}
Hence, for $m$ large enough, we have $\Psi_m(\ol \Phi(\mu_m-b)) \geq \Psi_m(t_m^*)=\Psi_m(\ol \Phi(\ol \Phi^{-1}(t_m^*)))$, which in turn implies  \eqref{toprovelemmatopk}.

We now turn to prove the result \eqref{mainresultlemmatopk} and follow for a classical concentration argument. Let 
$$
\hat{G}_m(t)= m^{-1}\sum_{i=1}^m \ind{p_i\leq t},\:\:\:t\in [0,1],
$$
so that $G_m(t)=\E \hat{G}_m(t)$ for all $t\in [0,1]$. Hence, for all $t\in (0,1)$,
\begin{align*}
\P(\alpha\hat{k}_\alpha/m < t)&\leq \P\left(\hat{G}_m(t) \leq t/\alpha\right)\\
&=\P\left(\hat{G}_m(t)-G_m(t) \leq t/\alpha-G_m(t)\right),
\end{align*}
because $\alpha \hat{k}_\alpha/m=\max\{t\in (0,1)\::\: \hat{G}_m(t)\geq t/\alpha\} $ by definition of $\hat{k}_\alpha$.
Applying this with $t=t^*_m$, this gives
\begin{align*}
\P(\alpha\hat{k}_\alpha/m < t^*_m)&=\P\left(\hat{G}_m(t^*_m)-G_m(t^*_m) \leq -G_m(t^*_m)\right)\\
&\leq \exp\left(-c m G_m(t^*_m)\right) \leq \exp\left(-C m_1 F_m(t^*_m)\right),
\end{align*}
for some constant $C>0$, by applying Bernstein's inequality. 
Since $F_m(t_m^*)\geq  \ol \Phi(-b)>0$, this gives $\P(\alpha\hat{k}_\alpha/m < t^*_m)\leq e^{-dm_1}$ for $m$ large enough and some constant $d>0$.

Next, for all $t\in [t^\sharp_m,1)$, still applying Bernstein's inequality,
\begin{align*}
&\P(\alpha\hat{k}_\alpha/m > t)\\
&\leq \sum_{k=1}^m \ind{\alpha k /m >t} \P\left(\hat{G}_m(\alpha k/m) - {G}_m(\alpha k/m) \geq k/m - {G}_m(\alpha k/m)\right)\\
&\leq \sum_{k=1}^m \ind{\alpha k /m >t} \exp\left(-  m \frac{(k/m - {G}_m(\alpha k/m))^2}{G_m(\alpha k/m)+ (1/3)(k/m - {G}_m(\alpha k/m))}
\right) 
\leq m \exp\left(- C m t^\sharp_m \right)  ,
\end{align*}
because for all $\alpha k/m\geq t^\sharp_m$, $ k/m - {G}_m(\alpha k/m)\geq G_m(\alpha k/m) \geq G_m(t^\sharp_m) = 0.5  t^\sharp_m/\alpha$ (given the monotonicity of $t\mapsto G_m(t)/t$).
Applying this for $t=t^\sharp_m\in (0,1)$, we obtain
\begin{align*}
\P(\alpha\hat{k}_\alpha/m > t^\sharp_m)&\leq e^{-d m_1},
\end{align*}
because $t^\sharp_m\geq t^*_m\gtrsim m_1/m$. This proves the result.


\subsection{Proof of Proposition~\ref{prop:adapttopk}}\label{proofprop:adapttopk}

Let us prove it for the adaptive uniform Wellner envelope (the other ones being either simpler or provable by using a similar argument). The idea is to prove that on an event where the (non-adaptive) Wellner envelope \eqref{topkWellner} is valid, we also have $m_0\leq \hat{m}_0^{{\tiny \mbox{Well}}}$. The result is implied just by monotonicity (Lemma~\ref{lem:h}).

For this, we come back to apply \eqref{WellnerUniformSimple} with $(U_1,\dots,U_n)=(p_i,i\in \cH_0)$, $n=m_0$. Hence,  on an event with probability at least $1-\delta$, we have 
for all $ t \in (0,1),$ 
\begin{align*}
  m_0^{-1} \sum_{i\in\cH_0} \ind{p_i\leq t} &\leq t\: h^{-1}\left(\frac{C_t}{t m_0}\right)\leq t\: \left(1+\sqrt{C_t/(2tm_0)}\right)^2,
\end{align*}
where we apply an upper bound coming from Lemma~\ref{lem:h}. This gives
\begin{align*}
   V_t/m_0 &\geq 1- t\: \left(1+\sqrt{C_t/(2tm_0)}\right)^2= 1- t -\sqrt{2tC_t/m_0} - C_t/(2m_0) .
\end{align*}
As a result, $V_t\geq m_0(1-t) -  \sqrt{2tC_t m_0}- C_t/2$ and thus
$
(1-t)m_0 - \sqrt{2tC_t} m_0^{1/2} -  C_t/2-V_t\leq 0,
$
which gives 
\begin{align*}
&m_0\leq \left(\frac{ \sqrt{2tC_t} + \sqrt{2tC_t + 4(1-t)( C_t/2+V_t) }}{2(1-t)}\right)^2=\left( \sqrt{\frac{tC_t}{2(1-t)^2}} + \sqrt{\frac{C_t}{{2(1-t)^2}}+\frac{V_t}{1-t}}\right)^2.
\end{align*}
Since this is uniform in $t$, we can take the minimum over $t$, which gives the $m_0$ confidence bound $\hat{m}_0^{{\tiny \mbox{Well}}}$.

\subsection{Proof of Proposition~\ref{prop:inconsist}}\label{sec:proof:prop:inconsist}
Classically, the Simes-based closed testing bound on $R=\BH(\alpha)$ is trivial (i.e. equal to $|R|$) if the global intersection hypothesis $[m]=\set{1,\ldots,m}$ is not rejected by the local Simes test, that is, if $\min_{1\leq k\leq m} (  p_{(k)} / k)>\delta/m$ \citep{goeman2011multiple}. By the assumptions on $G$ ($G$ is concave,  $G(0)=0$ and $G$ has derivative $g(0)$ at $0$), 
we have $G(t)\leq \min(1,g(0)t)$ for all $t\in [0,1]$ and thus $F(t)\leq \min(1,(\pi_0 + (1-\pi_0) g(0)) t)$ for all $t\in [0,1]$. 
Hence, the $p$-values are stochastically lower bounded by a $\mathrm{Unif}[0,\gamma]$ variable,
  with $\gamma = (\pi_0 + (1-\pi_0) g(0))^{-1}$. Let us denote $\P_\gamma$ the joint probability
  of i.i.d. $\mathrm{Unif}[0,\gamma]$ $p$-values. We therefore have 
  \begin{align*}
 \P\paren{\min_k ( p_{(k)} / k ) \leq \delta/m} &\leq  \P_\gamma\paren{\min_k ( p_{(k)} / k ) \leq \delta/m}\\
    & = \P_\gamma\paren{\min_k ( \gamma^{-1}p_{(k)} / k ) \leq \delta/(\gamma m)}\\
    &= \delta/\gamma,
  \end{align*}
  where the last equality is from \cite{Sim1986}, since under $\P_\gamma$ the rescaled $p$-values $\gamma^{-1}p_k$ are i.i.d. $\mathrm{Unif}[0,1]$. Thus, as soon as $\delta<\gamma$, the Simes-based closed testing bound
  is trivial with probability bounded away from 0 for all $m$, and thus cannot be $m$-consistent.

  On the other hand, if $\alpha> \gamma$, then there is a non-zero solution $t^*$ to the equation 
  $F(t) = t/\alpha$ (due to strict concavity of $F$, this solution is unique). It is well-known (see \citealp{Chi2007}) that
  asymptotically as $m\rightarrow \infty$, the $BH(\alpha)$ rejection threshold will tend to $t^*$
  in probability. {Therefore, $\wh{k}^{\mathrm{BH}}_\alpha $ grows to infinity at a rate of order $m$ in probability ({in the sense $\wh{k}^{\mathrm{BH}}_\alpha \asymp_{\P^{(m)}_{\pi_0,G}} m$ by using the notation of Proposition~\ref{cor-consistencytopk}}), 
  which by Proposition~\ref{prop-consistencytopk} implies the BH$(\alpha)$-consistency
  of $\ol{\FDP}_\alpha^{{\tiny \mbox{DKW}}}$ and $\ol{\FDP}_\alpha^{{\tiny \mbox{Well}}}$.}

\subsection{Proof of Theorem~\ref{th:powerLF}}\label{proof:th:powerLF}

First note that $\FDP^\infty(t)$ is an decreasing function of $\Pi(t)$ because $\frac{1-F_1(\lambda)}{1-\lambda}<1<\frac{F_1(s)}{s}$,  see \eqref{equ-FDPstarLF}. Since $\Pi(t)$ is decreasing from $\pi(0)$ to $\pi(1)=\Pi(+\infty)$, we have that 
$\FDP^\infty:[0,+\infty)\to [\underline{\alpha},\ol{\alpha}]$ is continuous increasing, where $\ol{\alpha}=\left( 1 +  \pi(1) \left( \frac{1-F_1(\lambda)}{1-\lambda} -1\right)\right))/\left(1 +  \pi(1) \left( \frac{F_1(s)}{s} -1\right) \right)$. Hence, if $\alpha'< \ol{\alpha}$, we have $0<t^*_{\alpha'}<+\infty$, $t^*_m=t^*_{\alpha'}$ for $m$ large enough, and thus $\FDP^\infty(t^*_m)=\alpha'$. If $\alpha'\geq \ol{\alpha}$,  $t^*_{\alpha'}=+\infty$, $t^*_m=m^{\beta}$ and $\FDP^\infty(t^*_m)\leq \alpha'$. Both cases are considered in what follows.
Consider the events 
\begin{align*}
\Omega_1 &= \left\{ \sup_{a\leq k\leq m}\left|k^{-1} \sum_{i=1}^{k} \ind{p_{i} >  \lambda} - k^{-1} \sum_{i=1}^{k} \P(p_{i} >  \lambda)\right| \leq 1/a^{1/4} \right\};\\
\Omega_2 &= \left\{ \sup_{a\leq k\leq m}\left|k^{-1} \sum_{i=1}^{k} \ind{p_{i} \leq s} - k^{-1} \sum_{i=1}^{k} \P(p_{i} \leq s)\right| \leq 1/a^{1/4} \right\}.
\end{align*}
By Lemma~\ref{lembinom}, the event $\Omega_1\cap \Omega_2$ occurs with probability larger than $1-2(2+a^{1/2})e^{-2a^{1/2}}$. Let 
\begin{align*}
e_1 &= 1 +  \Pi_m(m^{-\beta}t^*_{m}) \left( \frac{1-F_1(\lambda)}{1-\lambda} -1\right)=1 +  \Pi(t^*_{m}) \left( \frac{1-F_1(\lambda)}{1-\lambda} -1\right);\\
e_2 &= 1 +  \Pi_m(m^{-\beta}t^*_{m}) \left( \frac{F_1(s)}{s} -1\right)= 1 +  \Pi(t^*_{m}) \left( \frac{F_1(s)}{s} -1\right),
\end{align*}
be the numerator and denominator of $\FDP^\infty(t^*_{\alpha'})$, so that $e_1/e_2=\FDP^\infty(t^*_m)\leq \alpha'$. 
Let $k_0=\lfloor m^{1-\beta} t^*_{m}\rfloor\leq m$. 
Provided that $ k_0\geq a$, we have 
\begin{align*}
\left|k_0^{-1} \sum_{i=1}^{k_0} \P(p_{i} >  \lambda) - (1-\lambda) e_1\right| &\leq  
\left|k_{0}^{-1}\sum_{i=1}^{k_0}\pi_m(i/m) - \Pi_m(m^{-\beta}t^*_{m})\right|\left|(1-F_1(\lambda)) -(1-\lambda)\right|\\
&\leq  
\left|k_{0}^{-1}\sum_{i=1}^{k_0}\pi_m(i/m) - \Pi_m(k_0/m)\right| + \left| \Pi_m(k_0/m)-\Pi_m(m^{-\beta}t^*_{m})\right| \\
&\leq 1/a + L/m^{1-\beta} ,
\end{align*}
by applying Lemma~\ref{lemesp} and using that $\Pi(\cdot)$  is $L$-Lipschitz. 
Similarly,
\begin{align*}
\left|k_0^{-1} \sum_{i=1}^{k_0} \P(p_{i} \leq s) - s e_2 \right| &\leq   1/a +L/m^{1-\beta}.
\end{align*}
We deduce that on $\Omega_1\cap \Omega_2$ and when $ k_0\geq a$, we have
\begin{align*}
\wh{\FDP}_{k_0}&\leq \frac{ e_1+\frac{1}{a(1-\lambda)} + \frac{L}{m^{1-\beta} (1-\lambda)}+ \frac{1}{k_0(1-\lambda)} +  \frac{1}{a^{1/4}(1-\lambda)}  }{\frac{1}{a s}\vee \left(e_2 - \frac{1}{a s} - \frac{L}{m^{1-\beta} s}- \frac{1}{k_0 s} -  \frac{1}{a^{1/4}s}\right)}
\leq \frac{ e_1+r  }{e_2 - r} \leq \frac{ e_1  }{e_2 } + 4r,
\end{align*}
provided that $e_2\geq 2r $, because $e_1\leq 1$, $e_2\geq 1$, and by considering $r$ as in the statement. Since $e_1/e_2\leq \alpha'\leq \alpha-4r$ and $e_2\geq 1\geq 2r$ by assumption,  we have $\wh{\FDP}_{k_0}\leq \alpha$ and thus $\hat{k}_\alpha\geq k_0$ on $\Omega_1\cap \Omega_2$. The result is proved by noting that $\hat{r}_\alpha= \sum_{i=1}^{\hat{k}_\alpha} \ind{p_{i} \leq  s}\geq \sum_{i=1}^{{k}_0} \ind{p_{i} \leq  s} \geq (e_2-r) k_0 s\geq k_0 s/2$ on this event. 

\begin{lemma}\label{lemesp}
In the setting of Theorem~\ref{th:powerLF}, we have for all $a\geq 1$, $m\geq a$,
\begin{equation}\label{equajdjustesp}
\sup_{a\leq k\leq m}\left|k^{-1}\sum_{i=1}^k \pi_m(i/m) - \Pi_m(k/m)\right|\leq 1/a.
\end{equation}
\end{lemma}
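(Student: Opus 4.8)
The plan is to turn \eqref{equajdjustesp} into a one‑dimensional Riemann‑sum estimate. Fix $k$ with $a\le k\le m$. Using $\Pi_m(t)=t^{-1}\int_0^t\pi_m(s)\,ds$ at $t=k/m$, write
$$
\frac1k\sum_{i=1}^k\pi_m(i/m)-\Pi_m(k/m)=\frac{m}{k}\left(\frac1m\sum_{i=1}^k\pi_m(i/m)-\int_0^{k/m}\pi_m(s)\,ds\right),
$$
so the whole statement reduces to controlling the error of the right‑endpoint Riemann sum of $\pi_m$ on $[0,k/m]$ with mesh $1/m$: if that error is at most $1/m$, the prefactor $m/k$ turns it into $1/k\le 1/a$, and taking the supremum over $a\le k\le m$ gives the result.

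For the Riemann‑sum error I would exploit that $\pi_m$ is non‑increasing and $[0,1)$‑valued on $[0,k/m]$. Indeed $\pi_m(x)=\pi(m^\beta x)$ with $\pi$ non‑increasing on $[0,1]$ and constant equal to $\pi(1)$ on $[1,\infty)$, so $\pi_m$ is non‑increasing on all of $[0,\infty)$, and $[0,k/m]\subseteq[0,1]$ since $k\le m$. On each cell $[(i-1)/m,i/m]$, monotonicity gives $\frac1m\pi_m(i/m)\le\int_{(i-1)/m}^{i/m}\pi_m\le\frac1m\pi_m((i-1)/m)$; summing over $i=1,\dots,k$ and telescoping the upper bounds yields
$$
0\le\int_0^{k/m}\pi_m(s)\,ds-\frac1m\sum_{i=1}^k\pi_m(i/m)\le\frac1m\bigl(\pi_m(0)-\pi_m(k/m)\bigr)\le\frac{\pi_m(0)}{m}<\frac1m,
$$
using $\pi_m(k/m)\ge 0$ and $\pi_m(0)=\pi(0)<1$. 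Substituting this into the displayed identity finishes the argument.

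The step I expect to matter most is the appeal to monotonicity of $\pi$. The bound is false for a merely measurable $[0,1)$‑valued $\pi$ — concentrating mass near the grid points $i/m$ makes the right‑endpoint sum differ from the integral by almost $1$, so the constant $1$ in $1/a$ is really $\pi(0)-\pi(1)<1$ and nothing weaker will do. Since assumption (i) of Theorem~\ref{th:powerLF} is phrased on $\Pi$ rather than on $\pi$, I would be explicit that we use the (natural, and satisfied in every example of the paper) monotonicity of the instantaneous signal‑probability function $\pi$ itself; granting that, the remainder is just the two short computations above.
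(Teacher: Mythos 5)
Your proof is correct and is essentially the paper's own argument: the paper likewise sandwiches $\Pi_m(k/m)$ between the right- and left-endpoint Riemann sums using that $\pi_m$ is nonnegative and decreasing, and concludes from $\pi_m(0)\leq 1$ that the gap is at most $1/k\leq 1/a$. Your caveat about monotonicity is well taken but applies equally to the paper, whose proof also simply asserts that $\pi_m$ is decreasing even though assumption (i) of Theorem~\ref{th:powerLF} is stated for $\Pi$ rather than $\pi$.
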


\begin{proof}
First note that because $\pi_m$ is nonnegative continuous decreasing, we have for all $k\geq 1$,
$$(1/k) \sum_{i=1}^{k} \pi_m(i/m) \leq \Pi_m(k/m)=(m/k)\int_0^{k/m} \pi_m(s)ds\leq (1/k) \sum_{i=0}^{k-1} \pi_m(i/m).$$
Since $\pi_m(0)\leq 1$, the result is clear.
\end{proof}

This following lemma is similar to Lemma~1 in \cite{lei2016power}.
\begin{lemma}\label{lembinom}
Let $X_i\sim \mathcal{B}(p_i)$, $1\leq i\leq m$, be independent Bernoulli variables for $p_i\in [0,1]$, $1\leq i\leq m$. Then we have for all $a\geq 1$ and $m\geq a$,
\begin{equation}\label{equconcbinom}
\P\left(\sup_{a\leq k\leq m}\left|k^{-1}\sum_{i=1}^k X_i - p_i\right|\geq 1/a^{1/4}\right)\leq (2+a^{1/2})e^{-2a^{1/2}}.
\end{equation}
\end{lemma}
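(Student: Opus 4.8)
The plan is to control the centered partial sums $S_k := \sum_{i=1}^{k}(X_i - p_i)$ by a plain union bound over $k$; no peeling or stitching is needed in this elementary situation. Since $\{\,k^{-1}|S_k| \ge a^{-1/4}\,\} = \{\,|S_k| \ge a^{-1/4}k\,\}$ and the index range is a finite set of integers,
\[
\P\Big(\sup_{a\le k\le m} k^{-1}\big|S_k\big| \ge a^{-1/4}\Big) \le \sum_{k=\lceil a\rceil}^{m} \P\big(|S_k| \ge a^{-1/4}k\big).
\]

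First I would bound each summand by Hoeffding's inequality (Bernstein's or Azuma's would serve equally well): the $X_i$ are independent with values in $[0,1]$, so $\P(|S_k|\ge t)\le 2\exp(-2t^2/k)$, and taking $t = a^{-1/4}k$ gives $\P(|S_k|\ge a^{-1/4}k) \le 2\exp(-2a^{-1/2}k)$. Summing the resulting geometric series,
\[
\sum_{k\ge\lceil a\rceil} 2\exp(-2a^{-1/2}k) = \frac{2\exp(-2a^{-1/2}\lceil a\rceil)}{1-\exp(-2a^{-1/2})} \le \frac{2\exp(-2a^{1/2})}{1-\exp(-2a^{-1/2})},
\]
where the last step uses $a^{-1/2}\lceil a\rceil \ge a^{1/2}$. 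It then remains to check that $2/(1-\exp(-2a^{-1/2})) \le 2 + a^{1/2}$. Writing $u = a^{-1/2}\in(0,1]$, this is equivalent to $2u \le (2u+1)(1-e^{-2u})$, i.e. to $(2u+1)e^{-2u}\le 1$, which is exactly the elementary inequality $1+2u\le e^{2u}$. Chaining the three displays yields the claimed bound $(2+a^{1/2})\exp(-2a^{1/2})$.

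The argument is short and there is no genuine obstacle; the only point deserving attention is matching the constants. The $a^{1/2}$ prefactor of the statement comes precisely from the geometric-sum denominator $1-e^{-2a^{-1/2}}$, which must be lower-bounded through $e^x\ge 1+x$ rather than through its first-order Taylor term (the latter estimate degrades as $a\downarrow 1$), so that the bound holds uniformly over all $a\ge 1$. Finally, when $a$ is an integer --- the only case invoked in Theorem~\ref{th:powerLF} --- the ceiling is superfluous and $\lceil a\rceil = a$.
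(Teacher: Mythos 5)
Your proof is correct and follows essentially the same route as the paper's: a union bound over $k\geq a$, Hoeffding's inequality on each partial sum, summation of the resulting geometric series, and the elementary estimate $1+2u\leq e^{2u}$ to convert the denominator $1-e^{-2a^{-1/2}}$ into the stated prefactor $2+a^{1/2}$. Your write-up is in fact slightly more careful than the paper's (which has minor typos in the index range and omits the ceiling), but the argument is identical.
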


\begin{proof}
By Hoeffding's inequality, we have for all $x>0$,
\begin{align*}
\P\left( \sup_{1\leq k\leq a}  \left|k^{-1}\sum_{i=1}^k (H_i - \pi_m(i/m))\right| \geq x\right) \leq 2 \sum_{k\geq a}e^{-2k x^2} = \frac{2}{1-e^{-2x^2}} e^{-2a x^2}\leq (2+1/x^2 ) e^{-2a x^2}.
\end{align*}
We deduce the result by considering $x=1/a^{1/4}$.
\end{proof}

\subsection{Proof of Theorem~\ref{thonlinepower}}\label{proof:thonlinepower}

We get inspiration from the power analysis of \cite{javanmard2018online}. 
Let $c=\min(\alpha-W_0,W_0)$.  By definition \eqref{equalphaLORD}, the LORD procedure makes (point-wise) more rejections than the procedure given by the critical values
\begin{align} 
    \alpha_T =  c \max\{\gamma_{T-\tau_j},j\geq 0\},
    \label{eqn:LORD}
\end{align}
where, for any $j \geq 1$, $ \tau_j $ is the first time that the procedure makes $j$ rejections, that is,
 \begin{equation}
    \tau_j = \min\{t \geq 0\::\:R(t) \geq j\}\:\:\:\mbox{ ($\tau_j = + \infty$ if the set is empty)},
    \label{eqn:tauj}
\end{equation}
(note that  $\tau_{0}=0$)
for $R(T)=\sum_{t=1}^T \ind{p_t\leq \alpha_t}$. Let $\Delta_j = \tau_j-\tau_{j-1}$ the time between the $j$-th rejection and the $(j-1)$-th rejection.  {It is clear that $(R(t))_{t\geq 1}$ is a renewal process with holding times $(\Delta_j)_{j\geq 1}$ and jump times $(\tau_j)_{j\geq 1}$. In particular, the $\Delta_j$'s are i.i.d.}
As a result, we have for all $r,k\geq 1$, 
\begin{align*}
\P(R(k)< r)&\leq \P(\tau_r\geq k)=\P(\Delta_1+\dots+\Delta_{r}\geq k) 
\leq r \E\Delta_1/k ,
\end{align*}
where
\begin{align*}
\E \Delta_1&=\sum_{m\geq 1} \P(\Delta_1\geq m)= \sum_{m\geq 1} \prod_{\ell=1}^m (1-G(c \gamma_\ell))\leq \sum_{m\geq 1} e^{-m G(c \gamma_m)}.
\end{align*}
In addition, since $G$ is concave, 
$$
\frac{G(x)}{x}\geq g'(x)=\pi_0 + \pi_1 c \:e^{\mu \bar \Phi^{-1}(x)} \geq  e^{c' \sqrt{2\log( 1/x)}}\geq (\log (1/x))^{\gamma+2},
$$
for $x$ small enough and $c,c'>0$ some constants. 
This gives for large $m\geq M$, $e^{-m G(c \gamma_m)}\leq e^{-cm\gamma_m (\log (1/(c \gamma_m)))^{2+\gamma} } \leq e^{-2\log m }$, for some $M>0$, by the choice made for $\gamma_m$.
As a result,
$$
\E \Delta_1\leq C+\sum_{m\geq M} e^{-m G(c \gamma_m)}\leq C+\sum_{m\geq M} e^{-cm\gamma_m (\log (1/(c \gamma_m)))^{\gamma+2} }\leq C+\sum_{m\geq 1} e^{-2\log m }=C+\pi^2/6,
$$
for some constant $C>0$.
This gives 
\begin{align*}
\P(R(k)< r)
\leq r (C+\pi^2/6)/k .
\end{align*}
 and taking  $r=k^{1-a}$ gives \eqref{equonlinepower}.


\section{Tools of independent interest}

\subsection{A general envelope for a sequence of tests}

An important basis for our work is the following theorem, which has the flavor of Lemma~1 of \cite{katsevich2020simultaneous}, but based on a different martingale inequality, derived from a Freedman type bound (see Section~\ref{sec:freedman}).
{Also, while the pre-ordered and online settings are different, this result can be applied to both settings.}  

\begin{theorem}\label{th-mainresult}
Consider a potentially infinite set of null hypotheses $H_1,H_2,\dots$ for the distribution $P$ of an observation $X$, with associated $p$-values $p_1,p_2,\dots$ (based on $X$).
Consider an ordering $\pi(1),\pi(2),\dots$ (potentially depending on $X$) and a set of critical values $\alpha_1,\alpha_2,\dots$ (potentially depending on $X$). Let $\lambda\in [0,1)$ be a parameter and assume that there exists a filtration
$$
\mathcal{F}_k=\sigma\left( (\pi(i))_{1\leq i\leq k}, (\ind{p_{\pi(i)}\leq \alpha_i})_{1\leq i\leq k},  
(\ind{p_{\pi(i)}> \lambda})_{1\leq i\leq k} \right), \:\: k\geq 1,
$$
such that for all $k\geq 2$,
\begin{equation}\label{condlemma1}
 \P(p_{\pi(k)}\leq t\:|\: \mathcal{F}_{k-1}, H_{\pi(k)}=0) \leq t \mbox{ for all $t\in[0,1]$.} 
\end{equation}
Then, for any $\delta\in (0,1)$, with probability at least $1-\delta$, it holds 
$$\forall k \geq 1, \:\:\sum_{i=1}^k (1-H_{\pi(i)}) \ind{p_{\pi(i)}\leq \alpha_i} \leq \ol{V}_k,$$ for
\begin{align}
\ol{V}_k&=\sum_{i=1}^k (1-H_{\pi(i)})\ind{p_{\pi(i)} >  \lambda}\frac{\alpha_i }{1-\lambda}
+ \Delta\left(\sum_{i=1}^k (1-H_{\pi(i)}) \nu_i\right)
\label{boundlemma1},
\end{align}
where 
$\Delta(u)=2\sqrt{\varepsilon_u}\sqrt{{u \vee 1}} +  {\frac{1}{2}} \varepsilon_u$, 
	$\varepsilon_u=\log((1+\kappa)/\delta)+ 2\log\left(1+\log_2\left( {u \vee 1}\right)\right)$, $u>0$, $\kappa= \pi^2/6$.
and $\nu_i=\alpha_i( 1 + \min(\alpha_i,\lambda)/(1-\lambda))$,  
 for $i\geq 1$.
\end{theorem}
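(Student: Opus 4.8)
The plan is to recast the bound as a uniform (Ville-type) deviation inequality for an exponential supermartingale, and then to invoke the ``stitched'' version of Freedman's inequality recorded in Corollary~\ref{cor:freedman}.

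\emph{Reduction.} Write $V_k=\sum_{i=1}^k(1-H_{\pi(i)})\ind{p_{\pi(i)}\leq\alpha_i}$ for the (unobservable) number of null rejections among the first $k$ explored hypotheses, and $\widehat A_k=\sum_{i=1}^k(1-H_{\pi(i)})\frac{\alpha_i}{1-\lambda}\ind{p_{\pi(i)}>\lambda}$ for the leading term of $\ol V_k$. Introducing the increments $\xi_i=(1-H_{\pi(i)})\big(\ind{p_{\pi(i)}\leq\alpha_i}-\frac{\alpha_i}{1-\lambda}\ind{p_{\pi(i)}>\lambda}\big)$ and $A_k=\sum_{i=1}^k(1-H_{\pi(i)})\nu_i$, the target becomes: with probability at least $1-\delta$, $S_k:=\sum_{i=1}^k\xi_i=V_k-\widehat A_k\leq\Delta(A_k)$ for all $k\geq 1$.

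\emph{Per-step exponential control.} Fix $\theta>0$ and condition successively on $\mathcal F_{i-1}$ and on $H_{\pi(i)}$. On $\{H_{\pi(i)}=1\}$ the increment $\xi_i$ vanishes. On $\{H_{\pi(i)}=0\}$, \eqref{condlemma1} (applied with $t=\alpha_i$, which is legitimate since $\alpha_i$ is $\mathcal F_{i-1}$-measurable) gives $\mathbb{E}[\ind{p_{\pi(i)}\leq\alpha_i}\mid\mathcal F_{i-1},H_{\pi(i)}=0]\leq\alpha_i$, while $\mathbb{P}(p_{\pi(i)}>\lambda\mid\mathcal F_{i-1},H_{\pi(i)}=0)\geq 1-\lambda$; hence $\mathbb{E}[\xi_i\mid\mathcal F_{i-1},H_{\pi(i)}=0]\leq 0$, so $(S_k)$ is itself a supermartingale. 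Moreover, a Taylor/convexity estimate for the three-valued, mean-nonpositive increment $\xi_i$ (whose upward part is bounded by $1$) produces a Bennett-type bound $\mathbb{E}[e^{\theta\xi_i}\mid\mathcal F_{i-1},H_{\pi(i)}=0]\leq\exp\!\big((e^{\theta}-1-\theta)\,\nu_i\big)$, where $\nu_i$ is precisely the proxy needed to absorb the variance of the correction term $\frac{\alpha_i}{1-\lambda}\ind{p_{\pi(i)}>\lambda}$; the two regimes $\alpha_i\leq\lambda$ (the two indicators are then mutually exclusive) and $\alpha_i>\lambda$ are handled separately. Since $\nu_i$ is $\mathcal F_{i-1}$-measurable, re-integrating over $H_{\pi(i)}$ yields $\mathbb{E}\big[\exp\big(\theta\xi_i-(e^{\theta}-1-\theta)(1-H_{\pi(i)})\nu_i\big)\mid\mathcal F_{i-1}\big]\leq 1$, so that $L^{(\theta)}_k:=\exp\!\big(\theta S_k-(e^{\theta}-1-\theta)A_k\big)$, $k\geq 0$, is a nonnegative supermartingale with $L^{(\theta)}_0=1$.

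\emph{Stitching and conclusion.} Proceeding as in the proof of Proposition~\ref{cor:wellner}, decompose the range of the running variance $A_k$ into the dyadic blocks $[2^{j-1},2^{j})$, $j\geq 1$; on the $j$-th block apply Ville's maximal inequality to $L^{(\theta)}$ at the value of $\theta$ that is Bernstein-optimal at level $2^{j}$ (using also $\xi_i\leq 1$), with error budget $\delta_j=\delta/(\kappa j^2)$, $\kappa=\pi^2/6$, and take a union bound over $j$. This produces the iterated-logarithm correction $2\log(1+\log_2(A_k\vee 1))$ and the normalisation $\log((1+\kappa)/\delta)$ inside $\varepsilon_{A_k}$, together with the Bernstein inversion $x\mapsto\sqrt{2\,(2A_k)\,\varepsilon_{A_k}}+\tfrac12\varepsilon_{A_k}=\Delta(A_k)$ (the upper end of the block contributing the extra factor $2$ in the effective variance), so that $\mathbb{P}(\exists k\geq 1:S_k>\Delta(A_k))\leq\delta$. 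On the complementary event, $V_k=S_k+\widehat A_k\leq\widehat A_k+\Delta(A_k)=\ol V_k$ for all $k$, which is the claim. The main obstacle is the per-increment exponential bound with the precise proxy $\nu_i=\alpha_i(1+\min(\alpha_i,\lambda)/(1-\lambda))$: when $\alpha_i\leq\lambda$ the naive conditional second moment of $\xi_i$ can exceed $\nu_i$ because $\mathbb{P}(p_{\pi(i)}>\lambda\mid\cdot)$ may be close to $1$, so one must exploit $\mathbb{E}[\xi_i\mid\cdot]\leq 0$ within the cumulant estimate (together with $e^{\theta}-1-\theta\geq\theta^2/2$) to recover $\nu_i$; the re-integration over the non-predictable indicator $H_{\pi(i)}$ and the dyadic stitching are then routine given Corollary~\ref{cor:freedman}.
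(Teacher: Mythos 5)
Your high-level plan (a supermartingale with increments bounded by $1$, the per-increment variance proxy $\nu_i$, and the stitched Freedman bound of Corollary~\ref{cor:freedman}) is the paper's architecture, and the reduction and the stitching step are fine. The genuine gap sits exactly where you flag ``the main obstacle'', and your sketched repair does not close it. Because you center by the nominal quantities $\alpha_i/(1-\lambda)$ rather than by the true conditional probabilities, the per-step exponential bound $\mathbb{E}[e^{\theta\xi_i}\mid\mathcal{F}_{i-1}]\leq\exp\big((e^\theta-1-\theta)(1-H_{\pi(i)})\nu_i\big)$ is false in general under strict super-uniformity. Concretely, write $q=\mathbb{P}(p_{\pi(i)}\leq\alpha_i\mid\mathcal{F}_{i-1},H_{\pi(i)}=0)\leq \alpha_i$, $r=\mathbb{P}(p_{\pi(i)}>\lambda\mid\mathcal{F}_{i-1},H_{\pi(i)}=0)\geq 1-\lambda$ and $c=\alpha_i/(1-\lambda)$. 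In the case $\alpha_i\leq\lambda$ the two indicators are disjoint, so $\mathbb{E}[\xi_i^2\mid\cdot]=q+c^2r$; taking $\lambda=1/2$, $q=\alpha_i$, $r=1-\alpha_i$ gives $\mathbb{E}[\xi_i^2\mid\cdot]=\alpha_i+4\alpha_i^2(1-\alpha_i)>\alpha_i+2\alpha_i^2=\nu_i$ for $\alpha_i<1/2$ (and the centered conditional variance exceeds $\nu_i$ as well). The standard estimate $\mathbb{E}[e^{\theta\xi}]\leq\exp\big(\theta\,\mathbb{E}\xi+(e^\theta-1-\theta)\mathbb{E}\xi^2\big)$ for $\xi\leq1$ then recovers the proxy $\nu_i$ only if $(e^\theta-1-\theta)\big(\mathbb{E}[\xi_i^2\mid\cdot]-\nu_i\big)\leq\theta\,\big|\mathbb{E}[\xi_i\mid\cdot]\big|$, which in the worst case above reduces to $(e^\theta-1-\theta)\,\alpha_i/(1-\lambda)\leq\theta$. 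This fails once $\theta$ is not small compared to $(1-\lambda)/\alpha_i$, and the Bernstein-optimal $\theta$ on the first stitching block is of order $\sqrt{\log(1/\delta)}$, so there is no uniform-in-$\theta$ rescue; moreover the inequality $e^\theta-1-\theta\geq\theta^2/2$ you invoke points in the wrong direction for upper-bounding a moment generating function.

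The fix is the one the paper uses: define the increments with the exact conditional c.d.f.\ values, $\xi_i=(1-H_{\pi(i)})\big(\ind{p_{\pi(i)}\leq\alpha_i}-F_i(\alpha_i)\ind{p_{\pi(i)}>\lambda}/(1-F_i(\lambda))\big)$, where $F_i(t)=\mathbb{P}(p_{\pi(i)}\leq t\mid\mathcal{F}_{i-1},H_{\pi(i)}=0)$. Then $(S_k)$ is an exact martingale (Lemma~\ref{lem:martin}), and the conditional second moment is cleanly bounded by $(1-H_{\pi(i)})\nu_i$ in both cases $\alpha_i\leq\lambda$ and $\alpha_i>\lambda$ using only $F_i(\alpha_i)\leq\alpha_i$ and $1-F_i(\lambda)\geq1-\lambda$, so Corollary~\ref{cor:freedman} applies directly (no need to re-derive the exponential supermartingale and the dyadic blocks, which that corollary already packages). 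One passes to the observable compensator only at the very end, via $F_i(\alpha_i)\ind{p_{\pi(i)}>\lambda}/(1-F_i(\lambda))\leq\alpha_i\ind{p_{\pi(i)}>\lambda}/(1-\lambda)$, which yields the stated $\ol{V}_k$; this last monotone replacement is the legitimate way to arrive at the bound you tried to prove directly.
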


\begin{proof}
By Lemma~\ref{lem:martin}, we can apply Corollary~\ref{cor:freedman} (it self coming from Freedman's inequality) with $$\xi_i=
(1-H_{\pi(i)})\left(\ind{p_{\pi(i)} \leq \alpha_i}-F_i(\alpha_i) \frac{\ind{p_{\pi(i)} >  \lambda}}{1-F_i(\lambda)}\right)
,$$ 
where $F_i(\alpha_i)$ and $F_i(\lambda)$ are defined by \eqref{equFk}. 
First note  that $\xi_i\leq 1=: B$ almost surely.
Let us now prove 
\begin{equation}\label{equ-gilles}
\E(\xi_i^2\:|\: \mathcal{F}_{i-1})\leq (1-H_{\pi(i)}) \nu_i . 
\end{equation}
Indeed, assuming first $\alpha_i\leq \lambda$, we have by \eqref{condlemma1}, 
\begin{align*}
&\E(\xi_i^2\:|\: \mathcal{F}_{i-1}) =
(1-H_{\pi(i)})\left(\E(\ind{p_{\pi(i)} \leq \alpha_i}\:|\: \mathcal{F}_{i-1})+(F_i(\alpha_i))^2 \frac{\E(\ind{p_{\pi(i)} >  \lambda}\:|\: \mathcal{F}_{i-1})}{(1-F_i(\lambda))^2}\right)\\
&\leq (1-H_{\pi(i)}) (\alpha_i  + \alpha_i^2/({1-\lambda})) = (1-H_{\pi(i)})   \nu_i.
\end{align*}

which gives \eqref{equ-gilles}. Now, if $\alpha_i>\lambda$, still  by \eqref{condlemma1}, 
\begin{align*}
\E(\xi_i^2\:|\: \mathcal{F}_{i-1})=&\: (1-H_{\pi(i)})\left(\E(\ind{p_{\pi(i)} \leq \alpha_i}\:|\: \mathcal{F}_{i-1})+(F_i(\alpha_i))^2 \frac{\E(\ind{p_{\pi(i)} >  \lambda}\:|\: \mathcal{F}_{i-1})}{(1-F_i(\lambda))^2}\right.\\
&\left.-2 \frac{F_i(\alpha_i)}{1-F_i(\lambda)}  \E( \ind{\lambda<p_{\pi(i)} \leq \alpha_i}\:|\: \mathcal{F}_{i-1})\right)
\\
= &\: (1-H_{\pi(i)}) \big[F_i(\alpha_i)  + (F_i(\alpha_i))^2/(1-F_i(\lambda)) - 2 F_i(\alpha_i)(F_i(\alpha_i)-F_i(\lambda))/(1-F_i(\lambda) )\big]  \\
=&\: (1-H_{\pi(i)})  F_i(\alpha_i)\big[1+ (2 F_i(\lambda)-F_i(\alpha_i))/(1-F_i(\lambda))\big]\\
\leq&\: (1-H_{\pi(i)}) F_i(\alpha_i)\big[1+ F_i(\lambda)/(1-F_i(\lambda))\big] \leq  (1-H_{\pi(i)})\nu_i,
\end{align*}
which implies \eqref{equ-gilles} also in that case.
Finally, \eqref{equ-gilles} is established, which yields
$$
\forall k \geq 1,\:\: S_k\leq
2\sqrt{\varepsilon_k(\delta)}\sqrt{ \sum_{i=1}^k (1-H_{\pi(i)}) \nu_i} + 4 \varepsilon_k(\delta) 
$$
and thus \eqref{boundlemma1}.
\end{proof}

\begin{lemma}\label{lem:martin}
In the setting of Theorem~\ref{th-mainresult}, let
\begin{equation}\label{equFk}
F_k(\alpha_k)=\P(p_{\pi(k)}\leq \alpha_k\:|\: \mathcal{F}_{k-1}, H_{\pi(k)}=0), \:\:F_k(\lambda) = \P(p_{\pi(k)}\leq \lambda\:|\: \mathcal{F}_{k-1}, H_{\pi(k)}=0)
\end{equation}
the process $(S_k)_{k\geq 1}$ defined by $$S_k=\sum_{i=1}^k (1-H_{\pi(i)})\left(\ind{p_{\pi(i)} \leq \alpha_i}-F_i(\alpha_i) \frac{\ind{p_{\pi(i)} >  \lambda}}{1-F_i(\lambda)}\right), \quad k\geq 1,$$ is a martingale with  respect to the filtration $(\mathcal{F}_k)_{k\geq 1}$.
\end{lemma}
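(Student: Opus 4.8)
The plan is to verify directly the three defining properties of a martingale for $(S_k)_{k\ge1}$ relative to $(\mathcal{F}_k)_{k\ge1}$: adaptedness, integrability, and vanishing of the conditional increments. Write $S_k=\sum_{i=1}^k\xi_i$ with $\xi_i=(1-H_{\pi(i)})\big(\ind{p_{\pi(i)}\le\alpha_i}-F_i(\alpha_i)\,\ind{p_{\pi(i)}>\lambda}/(1-F_i(\lambda))\big)$. For the first two properties: for $i\le k$ the indicators $\ind{p_{\pi(i)}\le\alpha_i}$ and $\ind{p_{\pi(i)}>\lambda}$ are $\mathcal{F}_k$-measurable by the very definition of $\mathcal{F}_k$, while the quantities $F_i(\alpha_i)$, $F_i(\lambda)$ from \eqref{equFk} are $\mathcal{F}_{i-1}$-measurable, being conditional-probability versions given $\mathcal{F}_{i-1}$ (using that $\pi(i)$ and $\alpha_i$ are $\mathcal{F}_{i-1}$-predictable in all the settings considered); applying \eqref{condlemma1} at $t=\lambda$ gives $F_i(\lambda)\le\lambda$, so $1-F_i(\lambda)\ge1-\lambda>0$ and $\xi_i$ is well defined and $\mathcal{F}_k$-measurable, whence $S_k$ is $\mathcal{F}_k$-measurable. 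Since the subtracted term lies in $[0,1/(1-\lambda)]$, one has $-1/(1-\lambda)\le\xi_i\le1$, so $S_k\in L^1$.

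The crux is to show $\E[\xi_k\mid\mathcal{F}_{k-1}]=0$ for every $k\ge1$. I would condition additionally on $H_{\pi(k)}$ and invoke the tower property. On $\{H_{\pi(k)}=1\}$ the factor $1-H_{\pi(k)}$ annihilates $\xi_k$. On $\{H_{\pi(k)}=0\}$, the definitions \eqref{equFk} give $\E[\ind{p_{\pi(k)}\le\alpha_k}\mid\mathcal{F}_{k-1},H_{\pi(k)}=0]=F_k(\alpha_k)$ and $\E[\ind{p_{\pi(k)}>\lambda}\mid\mathcal{F}_{k-1},H_{\pi(k)}=0]=1-F_k(\lambda)$; pulling out the $\mathcal{F}_{k-1}$-measurable, strictly positive factor $1-F_k(\lambda)$ yields
$$\E\big[\xi_k\mid\mathcal{F}_{k-1},H_{\pi(k)}=0\big]=F_k(\alpha_k)-F_k(\alpha_k)\,\frac{1-F_k(\lambda)}{1-F_k(\lambda)}=0.$$
Combining the two cases over $H_{\pi(k)}$ (a trivial averaging since the nullity labels $H_j$ are nonrandom here) gives $\E[\xi_k\mid\mathcal{F}_{k-1}]=0$, i.e. the martingale property, and hence also the centering identity $\E(\xi_i\mid\mathcal{F}_{i-1})=0$ used inside the proof of Theorem~\ref{th-mainresult} when invoking Corollary~\ref{cor:freedman}.

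The argument is essentially a routine verification, so there is no genuine obstacle; the only point requiring care is the two-layer conditioning — one must ensure that $F_k(\alpha_k)$ and $F_k(\lambda)$, defined via an extra conditioning on $\{H_{\pi(k)}=0\}$, are genuinely $\mathcal{F}_{k-1}$-measurable so that they act as constants in the inner conditional expectation, and that $1-F_k(\lambda)$ stays bounded away from $0$, which is exactly what the super-uniformity hypothesis \eqref{condlemma1} guarantees.
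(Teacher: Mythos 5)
Your proof is correct and follows essentially the same route as the paper's: verify $\mathcal{F}_k$-measurability and then compute $\E[\xi_k\mid\mathcal{F}_{k-1}]=(1-H_{\pi(k)})\big(F_k(\alpha_k)-F_k(\alpha_k)\big)=0$ using the definitions in \eqref{equFk}. The extra care you take with integrability, the measurability of $F_k(\alpha_k)$ and $F_k(\lambda)$, and the lower bound $1-F_k(\lambda)\ge 1-\lambda>0$ is a welcome tightening of details the paper leaves implicit, but it is not a different argument.
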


\begin{proof}
First, $S_k$ is clearly $\mathcal{F}_k$ measurable. Second, we have for all $k\geq 2$,
\begin{align*}
\E(S_k\:|\: \mathcal{F}_{k-1}) &= \E\left( S_{k-1}+ (1-H_{\pi(k)})\left(\ind{p_{\pi(k)} \leq \alpha_k}-F_k(\alpha_k) \frac{\ind{p_{\pi(k)} >  \lambda}}{1-F_k(\lambda)}\right)\:\Big|\: \mathcal{F}_{k-1}\right) \\
&= S_{k-1} +  (1-H_{\pi(k)})(F_k(\alpha_k)-F_k(\alpha_k) ) 
= S_{k-1}.
\end{align*}
\end{proof}

\subsection{Uniform-Empirical version of Freedman's inequality}\label{sec:freedman}

{We establish a time-uniform, empirical Bernstein-style confidence bound for bounded
martingales. Various related inequalities have appeared in the literature, in particular
in the online learning community. The idea is based on `stitching' together time-uniform
bounds that are accurate on different segments of (intrinsic) time. The use of the stitching
principle has been further pushed and developed into many refinements by~\cite{Howardetal2021},
who also propose a uniform empirical Bernstein bound as a byproduct. The version given
here, based on a direct stitching of Freedman's inequality, has the advantage of being self-contained
with an elementary proof (though the numerical constants may be marginally worse than \citeauthor{Howardetal2021}'s).}

We first recall Freedman's inequality in its original version \citep{freedman1975tail}.
Let $(\xi_i,\cF_i)_{i\geq 1}$ be a supermartingale difference sequence,
i.e. $\e{\xi_i|\cF_{i-1}} \leq 0$ for all $i$. Define
$S_n:=\sum_{i=1}^n \xi_i$ (then $(S_n,\cF_n)$ is a supermartingale), and
$V_n:= \sum_{i=1}^n \var{\xi_i|\cF_{i-1}}$.

\begin{theorem}[Freedman's inequality; \protect{\citealp[Theorem 4.1]{freedman1975tail}}]
 Assume $\xi_i\leq 1$ for all $i\geq 1$. Then for all $t,v>0$:
 \begin{equation}
   \prob{S_n \geq t \text{ and } V_n \leq v \text{ for some } n\geq 1} \leq
   \exp\paren{- \varphi(v,t)}, \label{eq:mainfreedman}
 \end{equation}
 where
 \begin{equation}
   \varphi(v,t) := (v+t) \log \paren{1 + \frac{t}{v}} - t.
 \end{equation}
\end{theorem}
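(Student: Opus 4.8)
This is the classical Freedman bound, and the plan is the standard one: build an exponential supermartingale, apply Ville's time-uniform maximal inequality, and finish with a one-line optimization over a dual parameter. I fix $\eta \geq 0$ (the dual parameter; note this is unrelated to the thresholding parameter $\lambda$ appearing elsewhere), write $\psi(\eta) = e^{\eta}-1-\eta \geq 0$, and phrase everything in terms of the predictable quadratic variation $W_n = \sum_{i=1}^n \e{\xi_i^2|\cF_{i-1}}$, which coincides with $V_n$ whenever the $\xi_i$ are centred --- the case in all of our applications of the bound, via Lemma~\ref{lem:martin}.

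First I would record the scalar inequality: for every real $x \leq 1$ and every $\eta \geq 0$, $e^{\eta x} \leq 1 + \eta x + \psi(\eta) x^2$. This follows because $x \mapsto (e^{\eta x}-1-\eta x)/x^2$ (extended by $\eta^2/2$ at $0$) is nondecreasing on $\mathbb{R}$, hence at $x \leq 1$ is at most its value $\psi(\eta)$ at $x=1$. Conditioning on $\cF_{i-1}$, using $\e{\xi_i|\cF_{i-1}} \leq 0$ together with $1+u \leq e^u$, this gives the conditional moment-generating-function bound
\[
\e{e^{\eta \xi_i}|\cF_{i-1}} \leq 1 + \eta\,\e{\xi_i|\cF_{i-1}} + \psi(\eta)\,\e{\xi_i^2|\cF_{i-1}} \leq \exp\!\big(\psi(\eta)\,\e{\xi_i^2|\cF_{i-1}}\big).
\]
Consequently $M_n := \exp(\eta S_n - \psi(\eta) W_n)$ satisfies $\e{M_n|\cF_{n-1}} \leq M_{n-1}$, so $(M_n)_{n\geq 0}$ is a nonnegative supermartingale with $M_0=1$.

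Next I would invoke Ville's inequality: for a nonnegative supermartingale started at $1$, $\prob{\sup_{n\geq 1} M_n \geq a} \leq 1/a$ for all $a>0$. On the event that $S_n \geq t$ and $W_n \leq v$ for some $n \geq 1$ one has, since $\eta \geq 0$ and $\psi(\eta)\geq 0$, $M_n \geq \exp(\eta t - \psi(\eta) v)$, whence
\[
\prob{\exists n \geq 1:\ S_n \geq t,\ W_n \leq v} \leq \exp\!\big(-\eta t + \psi(\eta) v\big).
\]
Finally, I would optimize the exponent over $\eta \geq 0$: the minimum is attained at $\eta^\star = \log(1+t/v)$, and substituting yields $-\eta^\star t + \psi(\eta^\star) v = t - (t+v)\log(1+t/v) = -\varphi(v,t)$, which is precisely \eqref{eq:mainfreedman}.

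The convexity lemma and the scalar optimization are routine; the one step that genuinely needs care is the treatment of the second-order term --- namely that what the exponential supermartingale naturally controls is the conditional second moment $\sum \e{\xi_i^2|\cF_{i-1}}$ rather than the conditional variance, and that in the centred situations in which Freedman's inequality is used here the two agree. A second minor point is to ensure that Ville's inequality is applied at the infinite horizon, so that the resulting control is genuinely time-uniform; this is exactly the property the subsequent `stitching' argument leading to Corollary~\ref{cor:freedman} exploits.
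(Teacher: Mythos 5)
Your argument is correct, and it is essentially the only argument there is: the paper itself gives no proof of this statement but cites it to \citet{freedman1975tail}, and your route (the Bennett-type scalar bound $e^{\eta x}\leq 1+\eta x+(e^\eta-1-\eta)x^2$ for $x\leq 1$, the exponential supermartingale $\exp(\eta S_n-\psi(\eta)W_n)$, Ville's maximal inequality at infinite horizon, and optimization at $\eta^\star=\log(1+t/v)$) is exactly Freedman's original proof scheme. All the individual steps check out, including the computation $-\eta^\star t+\psi(\eta^\star)v=-\varphi(v,t)$.

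The one point worth being precise about is the caveat you yourself raise. Your supermartingale controls $W_n=\sum_i\mathbb{E}[\xi_i^2\mid\cF_{i-1}]$, and since $W_n\geq V_n$ the inclusion between events goes the wrong way: $\{S_n\geq t,\ V_n\leq v\}\supseteq\{S_n\geq t,\ W_n\leq v\}$, so your bound does \emph{not} literally imply the statement as phrased (supermartingale differences, $V_n$ the sum of conditional \emph{variances}) — it proves the second-moment version. For genuinely non-centred supermartingale differences the conditional mean $\mu_i=\mathbb{E}[\xi_i\mid\cF_{i-1}]<0$ contributes $\mu_i^2$ to $W_n$ but not to $V_n$, and the term $\mu_i(\eta+\psi(\eta)\mu_i)$ in your MGF expansion cannot simply be discarded when $\mu_i$ is very negative. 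This is harmless for the paper: the only use of the theorem is through Corollary~\ref{cor:freedmanaux} and Corollary~\ref{cor:freedman} as applied in Theorem~\ref{th-mainresult}, where Lemma~\ref{lem:martin} shows $(S_k)$ is an exact martingale, so $V_n=W_n$ and your proof applies verbatim; note also that the paper's own proof of Theorem~\ref{th-mainresult} bounds $\mathbb{E}[\xi_i^2\mid\cF_{i-1}]$, i.e.\ it already works with the second moment. So your proof establishes everything the paper actually needs; if one insisted on the literal supermartingale-plus-conditional-variance phrasing, an extra argument (or a restatement with $W_n$) would be required.
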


We establish the following corollary (deferring the proof for now):
\begin{corollary} \label{cor:freedmanaux}
   Assume $\xi_i\leq 1$ for all $i\geq 1$. Then for all $\delta \in (0,1)$ and $v>0$:
 \begin{equation}
   \prob{S_n \geq \sqrt{2 v \log \delta^{-1}} + \frac{\log \delta^{-1}}{2} \text{ and } V_n \leq v \text{ for some } n\geq 1} \leq \delta. \label{eq:corfreedman}
 \end{equation}
\end{corollary}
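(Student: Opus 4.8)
The plan is to obtain \eqref{eq:corfreedman} directly from Freedman's inequality \eqref{eq:mainfreedman} by choosing the threshold $t$ appropriately and then reducing the matter to a single scalar inequality. Write $L=\log\delta^{-1}>0$ (this is where $\delta\in(0,1)$ is used) and set $t=\sqrt{2vL}+L/2$. By \eqref{eq:mainfreedman}, the probability appearing in \eqref{eq:corfreedman} is at most $\exp(-\varphi(v,t))$, so it suffices to prove that $\varphi(v,t)\ge L$ for every $v>0$; then $\exp(-\varphi(v,t))\le e^{-L}=\delta$, which is exactly the claim.

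To evaluate $\varphi(v,t)=(v+t)\log(1+t/v)-t$ at this $t$, I would perform the change of variables $s=\sqrt{L/(2v)}>0$, i.e. $v=L/(2s^2)$. A short computation then gives $\sqrt{2vL}=L/s$, hence $t=L\left(\tfrac1s+\tfrac12\right)$, $1+t/v=(1+s)^2$, and $v+t=\dfrac{L(1+s)^2}{2s^2}$. Substituting these into $\varphi$,
\[
\varphi(v,t)=\frac{L(1+s)^2}{s^2}\log(1+s)-L\,\frac{2+s}{2s},
\]
so that after multiplying by $s^2/L>0$ the inequality $\varphi(v,t)\ge L$ becomes equivalent to the one–variable statement
\[
(1+s)^2\log(1+s)\ \ge\ s+\tfrac{3}{2}s^2,\qquad s\ge 0 .
\]

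The last step is to prove this scalar inequality by elementary calculus: putting $u=1+s\ge1$ it reads $g(u):=u^2\log u-\tfrac32 u^2+2u-\tfrac12\ge0$, and one checks $g(1)=0$, $g'(u)=2u\log u-2u+2$ with $g'(1)=0$, and $g''(u)=2\log u\ge0$ for $u\ge1$. Hence $g'$ is nondecreasing on $[1,\infty)$ and nonnegative there, so $g$ is nondecreasing and nonnegative on $[1,\infty)$, which closes the argument.

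The only point requiring any care is the reduction itself: the substitution $s=\sqrt{L/(2v)}$ is engineered precisely so that $1+t/v$ becomes the perfect square $(1+s)^2$, which both clears the logarithm and makes the resulting scalar inequality amenable to the two–derivative convexity argument. Everything else is routine — Freedman's inequality is invoked as stated, and the final inequality needs nothing beyond $g''\ge0$ on $[1,\infty)$ — so I do not anticipate a genuine obstacle, merely bookkeeping in the algebraic reduction.
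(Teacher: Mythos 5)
Your argument is correct and is essentially the paper's proof in a different parametrization: the paper bounds $\varphi(v,t)\ge 2\left(\sqrt{v+t}-\sqrt{v}\right)^2$ via the inequality $h(\lambda)\ge 2(\sqrt{\lambda}-1)^2$ of Lemma~\ref{lem:h} and then inverts, and your scalar inequality $(1+s)^2\log(1+s)\ge s+\tfrac32 s^2$ is exactly that bound evaluated at $\lambda=(1+s)^2$, which you verify directly by the same two-derivative calculus. No gap; the only difference is that you plug the target threshold $t$ into Freedman's inequality rather than arguing by monotonicity of $\varphi$ in $t$, which is an equivalent bookkeeping choice.
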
%
Following the stitching principle applied to the above we obtain the following.

\begin{corollary}\label{cor:freedman}
 Assume $\xi_i\leq B$ for all $i\geq 1$, where $B$ is a constant. {Put $\wt{V}_k := (V_k \vee B^2)$ and $\kappa = \pi^2/6$. Then for all $\delta \in (0,1/(1+\kappa))$, with probability
 at least $1-(1+\kappa)\delta$} it holds
 \[
   \forall k \geq 1 : S_k \leq 2  \sqrt{{\wt{V}_k} \eps(\delta,k)}
   + {\frac{1}{2}} B \eps(\delta,k), 
 \]
 where $\eps(\delta,k) := \log \delta^{-1} + 2 \log(1 + \log_2 (\wt{V}_k /B^2))$.

\end{corollary}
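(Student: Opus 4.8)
The plan is to apply Corollary~\ref{cor:freedmanaux} at a geometric sequence of variance thresholds and ``stitch'' the resulting bounds together. First I would normalize: dividing the $\xi_i$ by $B>0$ preserves $\xi_i\le 1$ and turns $(S_k,V_k,\wt{V}_k)$ into $(S_k/B,\,V_k/B^2,\,\wt{V}_k/B^2)$, so it suffices to prove the statement for $B=1$ and then rescale back; after this reduction $\wt{V}_k=V_k\vee 1\ge 1$. Set $v_j=2^{j}$ for integers $j\ge 0$, and fix the error budget $\delta_0=\delta$ and $\delta_j=\delta/j^2$ for $j\ge 1$, so that $\sum_{j\ge 0}\delta_j=\delta\bigl(1+\sum_{j\ge1}j^{-2}\bigr)=(1+\kappa)\delta$, which is $<1$ by hypothesis (and in particular each $\delta_j<1$, so Corollary~\ref{cor:freedmanaux} applies at each level).

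Next, for each $j\ge 0$ let $E_j$ be the event that $S_n<\sqrt{2 v_j\log\delta_j^{-1}}+\tfrac12\log\delta_j^{-1}$ for every $n\ge 1$ with $V_n\le v_j$. Corollary~\ref{cor:freedmanaux} applied with $v=v_j$ and confidence $\delta_j$ gives $\P(E_j^c)\le \delta_j$, hence by a union bound $\P\bigl(\bigcap_{j\ge0}E_j\bigr)\ge 1-(1+\kappa)\delta$. I would then work on this event. Fix $k\ge 1$ and put $j(k)=\lceil\log_2\wt{V}_k\rceil$, which is a.s.\ finite and $\ge 0$ since $\wt{V}_k\ge 1$. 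Because $V_k\le \wt{V}_k\le 2^{j(k)}=v_{j(k)}$, the event $E_{j(k)}$ yields $S_k<\sqrt{2 v_{j(k)}\log\delta_{j(k)}^{-1}}+\tfrac12\log\delta_{j(k)}^{-1}$.

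It then remains to compare this right-hand side with the claimed bound. On the one hand $v_{j(k)}=2^{\lceil \log_2\wt{V}_k\rceil}\le 2\wt{V}_k$, so $\sqrt{2 v_{j(k)}\log\delta_{j(k)}^{-1}}\le 2\sqrt{\wt{V}_k\log\delta_{j(k)}^{-1}}$. On the other hand $\log\delta_{j(k)}^{-1}=\log\delta^{-1}+2\log\!\bigl(j(k)\vee 1\bigr)$, and $j(k)=\lceil\log_2\wt{V}_k\rceil\le 1+\log_2\wt{V}_k$, so $\log\delta_{j(k)}^{-1}\le \log\delta^{-1}+2\log\!\bigl(1+\log_2\wt{V}_k\bigr)=\eps(\delta,k)$ (reading $\wt{V}_k$ as $\wt{V}_k/B^2$ before the normalization). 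Substituting both estimates gives $S_k\le 2\sqrt{\wt{V}_k\,\eps(\delta,k)}+\tfrac12\eps(\delta,k)$ for all $k\ge1$ simultaneously on an event of probability at least $1-(1+\kappa)\delta$; undoing the normalization by $B$ produces the factor $B$ in front of $\tfrac12\eps(\delta,k)$ and yields the stated inequality.

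The argument is essentially bookkeeping once Corollary~\ref{cor:freedmanaux} is in hand; the only points that need a little attention — and where a careless choice would break the statement — are choosing the weights $\delta_j$ so that the union bound lands exactly at $(1+\kappa)\delta$, checking that the geometric step $2$ is absorbed by the factor $2$ already present in front of the square root (via $v_{j(k)}\le 2\wt{V}_k$), and verifying that the iterated-logarithm term $\log\bigl(1+\log_2(\wt{V}_k/B^2)\bigr)$ dominates $\log j(k)$. None of these is a genuine analytic obstacle.
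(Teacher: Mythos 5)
Your proposal is correct and follows essentially the same stitching argument as the paper: apply Corollary~\ref{cor:freedmanaux} at the geometric variance scales $2^jB^2$ with weights $\delta_j=(j\vee 1)^{-2}\delta$, take a union bound summing to $(1+\kappa)\delta$, and for each $k$ invoke the level $j(k)=\lceil\log_2(\wt{V}_k/B^2)\rceil$, absorbing the factor $2$ from $v_{j(k)}\le 2\wt{V}_k$ into the constant in front of the square root. The only cosmetic difference is that the paper organizes the same index choice through the stopping times $\tau_j$ and the inclusion $A_j'\subseteq A_j$ rather than picking $j(k)$ pointwise.
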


\begin{proof}
Denote $v_j^2 := 2^{j}B^2$, $\delta_j := (j \vee 1)^{-2} \delta$, $j\geq 0$, and define the nondecreasing sequence of stopping times $\tau_{-1}=1$ and $\tau_j := \min \set[1]{k \geq 1: V_k > v_j^2}$ for $j\geq 0$. 
Define the events for $j\geq 0$:
\begin{align*}
 A_j & := \set{ \exists k\geq 1: S_k \geq \sqrt{2v_j^2\log \delta_j^{-1}} + \frac{1}{2} B \log \delta_j^{-1} \text{ and } V_k \leq v_j^2},\\
  A'_j & := \set{ \exists k \text{ with }  \tau_{j-1} \leq k < \tau_{j} : S_k \geq 2\sqrt{\wt{V}_k \eps(\delta,k)} + \frac{1}{2} B \eps(\delta,k)}.
\end{align*}
From the definition of $v_j^2,\delta_j$, we have $j = \log_2 (v_j^2/B^2)$ for $j\geq 1$.
For $j\geq 1$, $\tau_{j-1} \leq k < \tau_{j}$ implies $\wt{V}_k = V_k$, $v_{j-1}^2 = v^2_j/2 < \wt{V}_k \leq v_{j}^2 $, and further
\[
  \log \delta_j^{-1} = \log \delta^{-1} + 2 \log \log_2 (v_j^2 /B^2)
  \leq \eps(\delta,k).\]
Therefore it holds $A'_j \subseteq A_j$.
Furthermore, for $j=0$, we have $v_0^2=B^2,\delta_0=\delta$.
Further, if $k <\tau_0$ it implies $V_k <B^2$ and therefore $\wt{V}_k = B^2$, thus
$\eps(\delta,k) = \log \delta^{-1}$. Hence
\begin{multline*}
  A'_0 \subseteq \set{ \exists k \text{ with } k<\tau_0 : S_k \geq 2\sqrt{B^2 \log \delta_0^{-1}} + \frac{1}{2} B \log \delta_0^{-1}}\\
  \subseteq
  \set{ \exists k \geq 1 : S_k \geq  \sqrt{2v_0^2\log \delta_0^{-1}} + \frac{1}{2}B \log \delta_0^{-1}
  \text{ and } V_k \leq v_0^2 } = A_0.
\end{multline*}
Therefore, since by~\eqref{eq:corfreedman} it holds $\prob{A_j} \leq \delta_j$ for all $j \geq 0$:
\begin{align*}
 \prob{\exists k\leq n: S_k \geq 2  \sqrt{V_k\eps(\delta,k) } + B \eps(\delta,k)}  
  = \prob[4]{\bigcup_{j \geq 0 } A'_j}
  \leq \prob[4]{\bigcup_{j \geq 0 } A_j}
  \leq \delta \sum_{j\geq 0} (j \vee 1)^{-2} \leq 3 \delta. 
\end{align*}

\end{proof}

\begin{proof}[Proof of Corollary~\ref{cor:freedmanaux}]
  It can be easily checked that $\varphi(v,t)$ is increasing in $t$ (for $v,t>0$).
  Thus $S_n \geq t \Leftrightarrow \varphi(p,(S_n)_+) \geq \varphi(p,t)$.
Since $\varphi(v,0)=0$, and~$\lim_{t \rightarrow \infty} \varphi(v,t) = \infty$,
it follows that for any $\delta \in (0,1]$,
there exists a unique real $t(v,\delta)$ such
that $\varphi(v,t(v,\delta)) = -\log \delta$. It follows that~\eqref{eq:mainfreedman} is equivalent to:
\begin{equation}
  \forall v >0, \forall \delta \in (0,1]: \qquad
     \prob{ A_{v,\delta}} \leq
   \delta, \label{eq:mainfreedman2}
\end{equation}
where
\[
  A_{v,\delta}:=
  \set{\varphi(v,(S_n)_+) \geq -\log \delta \text{ and } T_n \leq v \text{ for some } n\geq 1}.
\]
Observe that $\varphi(v,t) = v h\paren{\frac{v+t}{v}}$, where $h$ is the function defined by~\eqref{funh}.
Since $h(\lambda) \geq 2 (\sqrt{\lambda}-1)^2$ from Lemma~\ref{lem:h}, we deduce $\varphi(v,t) \geq 2(\sqrt{v+t} -\sqrt{v})^2$
thus, whenever $\varphi(v,(S_n)_+) \leq -\log \delta$, we have:
\[
  \sqrt{v + (S_n)_+} \leq \sqrt{v} + \sqrt{\frac{\log \delta^{-1}}{2}};
\]
taking squares on both sides entails
\[
  S_n \leq \sqrt{2 v \log \delta^{-1}} + \frac{\log \delta^{-1}}{2},
\]
proving~\eqref{eq:corfreedman}.

\end{proof}

\section{Auxiliary results}

\begin{lemma}\label{lem:h}
The function $h$ defined by \eqref{funh} is increasing strictly convex from $(1,\infty)$ to $(0,\infty)$, while $h^{-1}$ is increasing strictly concave from $(0,\infty)$ to $(1,\infty)$. The functions $h$ and $h^{-1}$ satisfy the following upper/lower bounds:
\begin{align*} 
2 (\sqrt{\lambda}-1)^2  &\leq h(\lambda)\leq (\lambda-1)^2/2,\:\:\:\:\:\lambda>1\\
1+\sqrt{2y}&\leq h^{-1}(y)\leq  (1+ \sqrt{y/2})^2,\:\:\:\:\:y>0
\end{align*}
In particular, 
$h^{-1}(y)-1\leq \sqrt{2y} + \cO(y)$ as $y\to 0$.
In addition, for any $c>0$, $x\in (1,+\infty)\mapsto xh^{-1}(c/x)$ is increasing.
\end{lemma}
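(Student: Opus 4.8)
The plan is to reduce every claim to elementary one-variable calculus on $h$ itself, exploiting the clean derivative $h'(\lambda)=\log\lambda$.

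First I would differentiate: $h'(\lambda)=(\log\lambda-1)+\lambda\cdot\lambda^{-1}=\log\lambda$ and $h''(\lambda)=1/\lambda$. Since $h'>0$ and $h''>0$ on $(1,\infty)$, $h$ is strictly increasing and strictly convex; together with $h(1^{+})=0$ and $h(\lambda)\to\infty$ this makes $h$ a bijection from $(1,\infty)$ onto $(0,\infty)$. Consequently $h^{-1}$ is strictly increasing, and it is strictly concave because the inverse of a strictly increasing strictly convex function is strictly concave (equivalently, $(h^{-1})'(y)=1/\log(h^{-1}(y))$ is positive and strictly decreasing in $y$).

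Next I would prove the two bounds on $h$ by the ``match value and derivatives at the endpoint $\lambda=1$'' trick. For the lower bound, set $g(\lambda)=h(\lambda)-2(\sqrt\lambda-1)^2$; then $g(1)=0$, and writing $u=\sqrt\lambda\ge 1$ one gets $g'(\lambda)=\log\lambda-2+2/\sqrt\lambda=2\bigl(\log u-1+1/u\bigr)$, which is nonnegative because $u\mapsto\log u-1+1/u$ vanishes at $u=1$ and has derivative $(u-1)/u^{2}\ge 0$; hence $g\ge 0$. For the upper bound, set $\psi(\lambda)=(\lambda-1)^{2}/2-h(\lambda)$; then $\psi(1)=\psi'(1)=0$ and $\psi''(\lambda)=1-1/\lambda\ge 0$, so $\psi\ge 0$. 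The bounds on $h^{-1}$ then follow by applying the increasing map $h^{-1}$: substituting $\lambda=h^{-1}(y)$ into $2(\sqrt\lambda-1)^{2}\le h(\lambda)\le(\lambda-1)^{2}/2$ and solving the resulting quadratic inequalities (legitimately taking square roots since $h^{-1}(y)\ge 1$) yields $1+\sqrt{2y}\le h^{-1}(y)\le(1+\sqrt{y/2})^{2}$; the asymptotic $h^{-1}(y)-1\le\sqrt{2y}+\cO(y)$ is then immediate from $(1+\sqrt{y/2})^{2}-1=\sqrt{2y}+y/2$.

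Finally, for the monotonicity of $x\in(1,\infty)\mapsto x\,h^{-1}(c/x)$, I would differentiate and substitute $\lambda:=h^{-1}(c/x)>1$. Using $(h^{-1})'=1/\log(h^{-1})$ and $h(\lambda)=\lambda\log\lambda-\lambda+1$, one computes $\frac{d}{dx}\bigl(x\,h^{-1}(c/x)\bigr)=h^{-1}(c/x)-\frac{c}{x}(h^{-1})'(c/x)=\lambda-\frac{h(\lambda)}{\log\lambda}=\frac{\lambda\log\lambda-h(\lambda)}{\log\lambda}=\frac{\lambda-1}{\log\lambda}>0$. I do not expect a real obstacle here; the only mild care needed is the behaviour at the endpoint $\lambda\to 1$ (i.e.\ $y\to 0$), where continuity/limiting arguments should replace direct substitution, and keeping track of signs when taking square roots in the $h^{-1}$ bounds.
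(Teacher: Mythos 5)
Your proof is correct throughout. For the first two claims (monotonicity/convexity of $h$ and $h^{-1}$, and the two-sided bounds via the auxiliary functions $\lambda\mapsto h(\lambda)-2(\sqrt\lambda-1)^2$ and $\lambda\mapsto(\lambda-1)^2/2-h(\lambda)$) you follow exactly the route the paper takes, just with the calculus spelled out; the inversion of the bounds to get $1+\sqrt{2y}\le h^{-1}(y)\le(1+\sqrt{y/2})^2$ is the same. The one place you genuinely diverge is the final claim that $x\mapsto x\,h^{-1}(c/x)$ is increasing. The paper argues abstractly: since $h^{-1}$ is concave with $h^{-1}(0)=1$, the slope $(h^{-1}(y)-1)/y$ is decreasing, hence $h^{-1}(y)/y=(h^{-1}(y)-1)/y+1/y$ is decreasing as a sum of decreasing functions, and then $x\,h^{-1}(c/x)=c\cdot h^{-1}(y)/y$ with $y=c/x$ gives the result. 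You instead differentiate directly and exploit the identity $\lambda\log\lambda-h(\lambda)=\lambda-1$ to obtain the explicit derivative $(\lambda-1)/\log\lambda>0$. Both are valid; the paper's argument is derivative-free and would survive if $h^{-1}$ were merely concave rather than smooth, while yours yields a clean quantitative formula for the derivative (and behaves fine at the endpoint, since $(\lambda-1)/\log\lambda\to 1$ as $\lambda\to 1^{+}$). Nothing is missing from your argument.
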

  
\begin{proof}
Clearly, $h'=\log$, which is positive and increasing on $(1,\infty)$. This gives the desired property for $h$ and $h^{-1}$. Next, the bounds can be easily obtained by studying the functions $\lambda\mapsto(\lambda-1)^2/2-h(\lambda)$ and $\lambda\mapsto h(\lambda)-2 (\sqrt{\lambda}-1)^2$.
For the last statement, since $h^{-1}$ is strictly concave and $h^{-1}(0)=1$, we have that $y\in (0,\infty)\mapsto (h^{-1}(y)-1)/y$ is decreasing.  Since $y\in (0,\infty)\mapsto 1/y$ is also decreasing, this gives that $y\in (0,\infty)\mapsto h^{-1}(y)/y$ is decreasing. This gives the last statement.
\end{proof}

\begin{figure}[h!]
\includegraphics[scale=0.6]{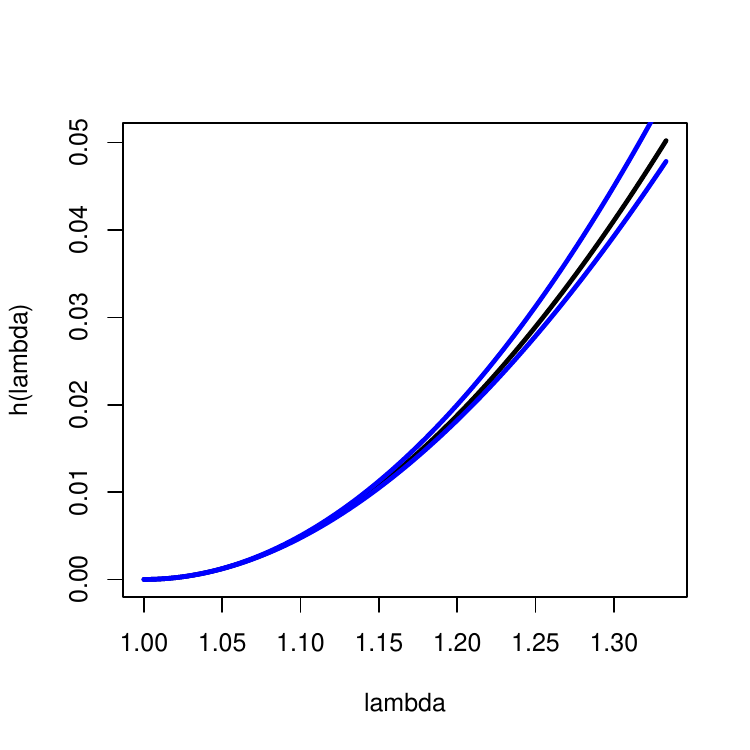}
\includegraphics[scale=0.6]{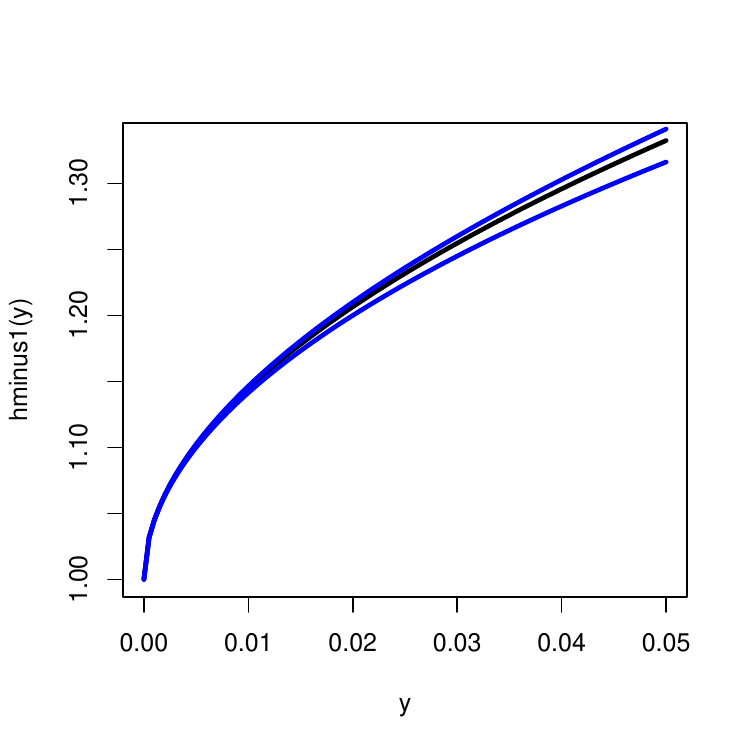}
\caption{Displaying $h$ (left) and $h^{-1}$ (right). Bounds of Lemma~\ref{lem:h} are displayed in blue.
\label{fig:h}}
\end{figure}

\begin{lemma}[Wellner's inequality, Inequality 2, page 415, with the improvement of Exercise 3 page 418 of \citealp{shorack2009empirical}]\label{Wellnerineq}
Let $U_1,\dots,U_n$ be $n\geq 1$ i.i.d. uniform random variables. For all $\lambda\geq 1$, $a\in [0,1)$, we have
$$
\P\left(\exists t \in [a,1]\::\: n^{-1} \sum_{i=1}^n \ind{U_i\leq t}/t \geq \lambda \right)\leq e^{-n a h(\lambda)/(1-a)},
$$
for $h(\cdot)$ defined by \eqref{funh}. 
\end{lemma}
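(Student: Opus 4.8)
This is the classical Wellner inequality; the plan is to reconstruct its proof by a reverse‑martingale argument combined with an exponential optimization, followed by an elementary comparison of rate functions. Write $\mathbb{F}_n(t)=n^{-1}\sum_{i=1}^n \mathbf{1}\{U_i\le t\}$ for the empirical c.d.f.\ and $R_t:=\mathbb{F}_n(t)/t$ for $t\in(0,1]$. The case $a=0$ is trivial (the bound equals $1$), so assume $a\in(0,1)$; and if $\lambda a\ge1$ the event is empty, since $R_t\le 1/t\le1/a\le\lambda$, so assume also $\lambda a<1$. First I would record the key structural fact: with $\mathcal{G}_t=\sigma(\mathbb{F}_n(u):u\ge t)$, conditionally on $\mathcal{G}_t$ the $n\mathbb{F}_n(t)$ sample points lying in $[0,t]$ are i.i.d.\ uniform on $[0,t]$, so for $s\le t$ one has $n\mathbb{F}_n(s)\mid\mathcal{G}_t\sim \mathrm{Bin}(n\mathbb{F}_n(t),s/t)$ and hence $\mathbb{E}[R_s\mid\mathcal{G}_t]=\mathbb{E}[\mathbb{F}_n(s)\mid\mathcal{G}_t]/s=\mathbb{F}_n(t)/t=R_t$. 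Thus $(R_t)$, read from $t=1$ down to $t=a$ (equivalently $s\mapsto R_{1-s}$ with the increasing filtration $\mathcal{G}_{1-s}$), is a martingale, and $(e^{\theta R_t})$ is a nonnegative submartingale for every $\theta>0$. Since $\mathbb{F}_n$ is right‑continuous and $t\mapsto R_t$ decreases between consecutive order statistics, $\sup_{t\in[a,1]}R_t$ is the almost‑sure limit of the maxima of $R_t$ over a refining deterministic grid of $[a,1]$, which lets me apply the discrete‑time Doob maximal inequality to finitely many deterministic time points and then pass to the limit.

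Next I would run Doob and optimize the tilt. Doob's maximal inequality for $(e^{\theta R_t})$ gives, for $\lambda\ge1$,
\[
\mathbb{P}\Big(\sup_{t\in[a,1]}R_t\ge\lambda\Big)\ \le\ e^{-\theta\lambda}\,\mathbb{E}\big[e^{\theta R_a}\big]\ =\ e^{-\theta\lambda}\big(1-a+a\,e^{\theta/(na)}\big)^n ,
\]
using $n\mathbb{F}_n(a)\sim\mathrm{Bin}(n,a)$. Minimizing the right‑hand side over $\theta>0$, the stationarity condition is $e^{\theta/(na)}=\lambda(1-a)/(1-\lambda a)$ (which is $>1$ precisely because $\lambda>1$), and substituting back collapses the bound to the binomial Chernoff rate
\[
\mathbb{P}\Big(\sup_{t\in[a,1]}R_t\ge\lambda\Big)\ \le\ \exp\big(-n\,D(\lambda a\,\|\,a)\big),\qquad D(q\|p):=q\log\tfrac{q}{p}+(1-q)\log\tfrac{1-q}{1-p}.
\]
Finally I would upgrade the exponent to the stated form by proving the elementary inequality $(1-a)\,D(\lambda a\|a)\ge a\,h(\lambda)$, with $h$ as in \eqref{funh}. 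Fixing $a$ and setting $g(\lambda)=(1-a)D(\lambda a\|a)-a\,h(\lambda)$, a short computation gives $g(1)=0$, $g'(1)=0$, and $g''(\lambda)=a^2(\lambda-1)/\big(\lambda(1-\lambda a)\big)\ge0$ on $[1,1/a)$; hence $g$ is convex with a double zero at $\lambda=1$ and is therefore nonnegative, which finishes the proof.

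The genuinely non‑mechanical step is the first one: \emph{recognising that $\mathbb{F}_n(t)/t$ is a martingale in reversed time} — this is precisely what makes Doob applicable — together with the minor measure‑theoretic care needed to reduce the continuum supremum to a countable/finite one via right‑continuity. Steps two and three are routine calculus. I would also note that the "improvement of Exercise 3, p.\ 418" alluded to in the citation is exactly the sharpening obtained by optimizing the tilt in step two, rather than bounding crudely by stopping at the single point $t=a$: this is what produces the factor $1/(1-a)$ in the exponent.
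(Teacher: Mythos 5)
The paper does not prove this lemma: it is imported verbatim from Shorack and Wellner (Inequality~2, p.~415, with the Exercise~3 improvement), so there is no in-paper argument to compare against. Your reconstruction is correct and is essentially the standard route behind that textbook inequality: the reverse-martingale property of $\mathbb{F}_n(t)/t$, Doob's maximal inequality for $e^{\theta R_t}$ with terminal value at $t=a$, exact optimization of the tilt yielding the binomial Chernoff exponent $D(\lambda a\,\|\,a)$, and the rate-function comparison $(1-a)D(\lambda a\|a)\geq a\,h(\lambda)$ (your computation of $g(1)=g'(1)=0$ and $g''\geq 0$ on $[1,1/a)$ checks out, and indeed dropping the second term of $D$ gives only $a\,h(\lambda)$, which is exactly the gap the cited exercise closes). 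One tiny imprecision: at the boundary $\lambda a=1$ the event is not empty (it occurs when all $U_i\leq a$, with probability $a^n$), but the bound still holds there, either by letting $\lambda'\uparrow 1/a$ in your Chernoff estimate or by checking $a\log(1/a)\leq 1-a$ directly, so nothing essential is affected.
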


\begin{lemma}\label{lem:KRconstant}
The KR constants in \eqref{FDPKRknockoff} and \eqref{onlineKR} satisfy, as $a\to \infty$,
\begin{align*}
\frac{ \log(1/\delta_a)}{a \log(1+\frac{1 - \delta_a^{B/a}}{B})}  &= 1 + O\left(\frac{\log(a)}{a} \right);\\
\frac{ \log(1/\delta_a)}{a \log(1 + \log(1/\delta_a)/a )}  &= 1 + O\left(\frac{\log(a)}{a} \right),
\end{align*}
where $\delta_a=c\delta/a$, $c=\pi^2/6$ and the $O(\cdot)$ depends only on the constants $\delta>0$ and $B>0$.
\end{lemma}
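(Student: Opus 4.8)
The plan is to reduce both statements to elementary Taylor expansions after noticing that the only feature of $\delta_a$ that matters is its logarithm. Set $L_a := \log(1/\delta_a) = \log a + \log(1/(c\delta))$, so that $L_a = \log a + O(1)$; in particular $L_a/a \to 0$, $L_a/a = O((\log a)/a)$, and $(L_a/a)^2 = o((\log a)/a)$ as $a\to\infty$. Every error term produced below will be a power of $L_a/a$, so the claimed bound $1+O((\log a)/a)$ follows at once. The constants in all the $O(\cdot)$'s depend only on $\delta$ and $B$, since those are the only parameters entering $L_a$ and the exponent $B/a$ (the factor $c=\pi^2/6$ being absolute).

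For the second ratio I would expand $\log(1+L_a/a)=\tfrac{L_a}{a}-\tfrac{L_a^2}{2a^2}+O((L_a/a)^3)$, so that
\[ a\log(1+L_a/a) = L_a\paren{1-\tfrac{L_a}{2a}+O((L_a/a)^2)}, \]
and therefore $\dfrac{L_a}{a\log(1+L_a/a)} = \paren{1-\tfrac{L_a}{2a}+O((L_a/a)^2)}^{-1} = 1+\tfrac{L_a}{2a}+O((L_a/a)^2) = 1+O((\log a)/a)$.

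For the first ratio I would first control the inner quantity $\tfrac{1-\delta_a^{B/a}}{B}$. For $a$ large, $L_a>0$, hence $0<\delta_a^{B/a}=e^{-BL_a/a}<1$ and the argument of the outer logarithm exceeds $1$. Writing $1-e^{-x}=x(1-x/2+O(x^2))$ with $x=BL_a/a\to 0$ gives $\tfrac{1-\delta_a^{B/a}}{B}=\tfrac{L_a}{a}+O((L_a/a)^2)\to 0$. Plugging this into $\log(1+\cdot)$ and multiplying by $a$ yields
\[ a\log\paren{1+\tfrac{1-\delta_a^{B/a}}{B}} = L_a + O(L_a^2/a) = L_a\paren{1+O(L_a/a)}, \]
whence $\dfrac{L_a}{a\log(1+(1-\delta_a^{B/a})/B)} = \paren{1+O(L_a/a)}^{-1} = 1+O(L_a/a) = 1+O((\log a)/a)$, which is the first claim.

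There is no serious obstacle; the computation is routine. The only points that require a little care are (i) checking $L_a/a\to 0$ so that every Taylor expansion above is legitimate, and (ii) verifying that the remainder $(L_a/a)^2\asymp((\log a)/a)^2$ is negligible next to the advertised rate $(\log a)/a$ — which it is, since $(\log a)/a\to 0$.
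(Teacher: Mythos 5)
Your proof is correct. The paper in fact states Lemma~\ref{lem:KRconstant} without proof, so there is no argument to compare against; your reduction to $L_a=\log(1/\delta_a)=\log a+O(1)$ followed by routine Taylor expansions of $\log(1+x)$ and $1-e^{-x}$ is exactly the computation the authors evidently regarded as standard, and it correctly tracks that all constants depend only on $\delta$ and $B$. One cosmetic remark: the lemma's normalization $\delta_a=c\delta/a$ differs from the $\delta_a=\delta/(\kappa a^2)$ used in Theorems~\ref{th-preordered-KR} and~\ref{th:FDPcontrolonline}, but since either choice gives $\log(1/\delta_a)=O(\log a)$, your argument covers both without modification.
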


\section{Additional experiments}\label{sec:addexp}

\subsection{Interpolated bounds}

We reproduce here the figures of the numerical experiments in the top-$k$ and preordered settings, by adding the interpolated bounds. On each graph, the median of the generated interpolated bound is marked by a star symbol, which is given in addition to the former boxplot (of the non-interpolated bound). By doing so, we can evaluate the gain brought by the interpolation operation in each case. Note that the interpolated bound is not computed for $m\geq 10^5$ for computational cost reasons.

\begin{center}
\begin{figure}[!]
\begin{tabular}{cc}
$\alpha=0.05$ & $\alpha=0.1$\\
\includegraphics[scale=0.12]{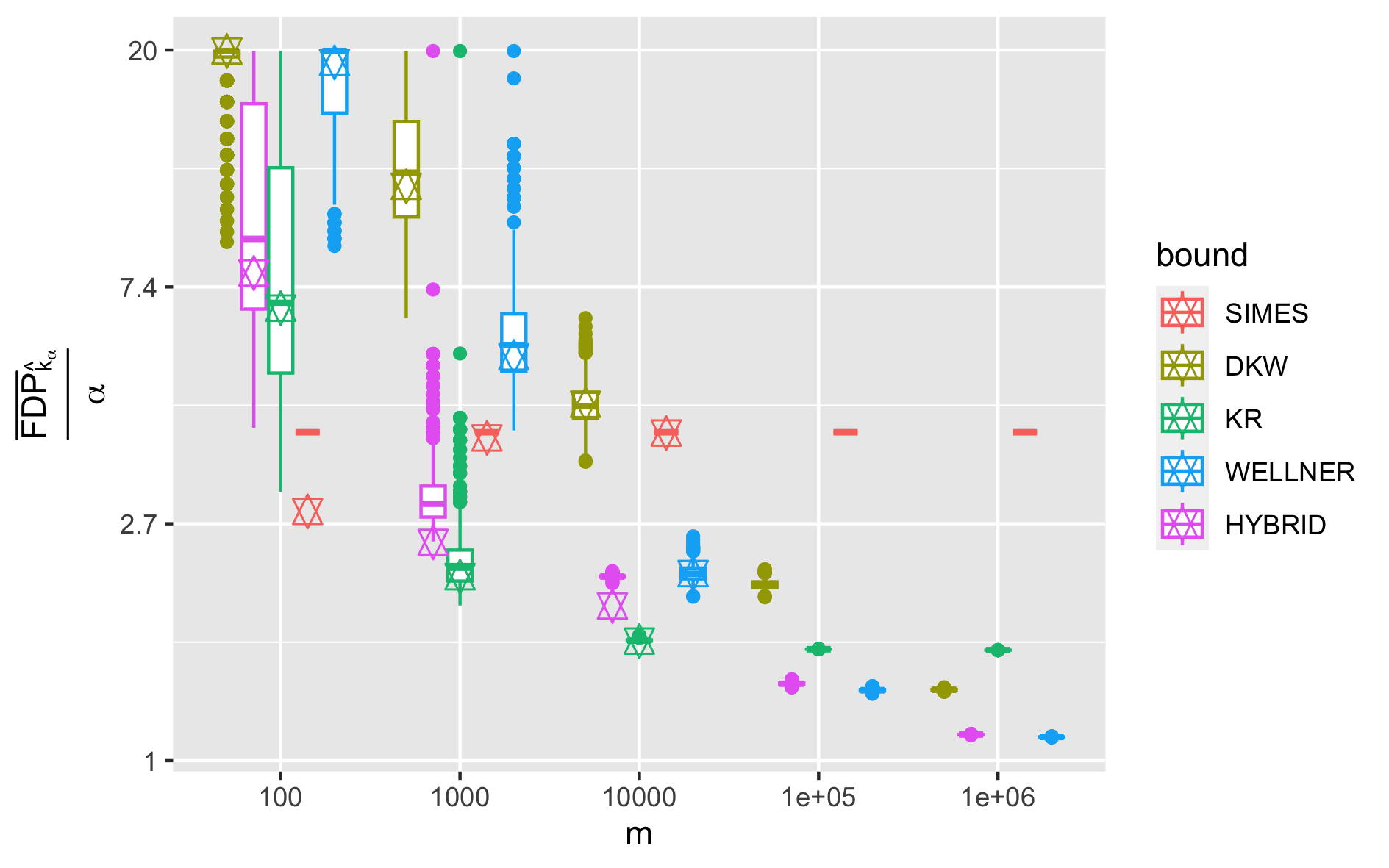}&\includegraphics[scale=0.12]{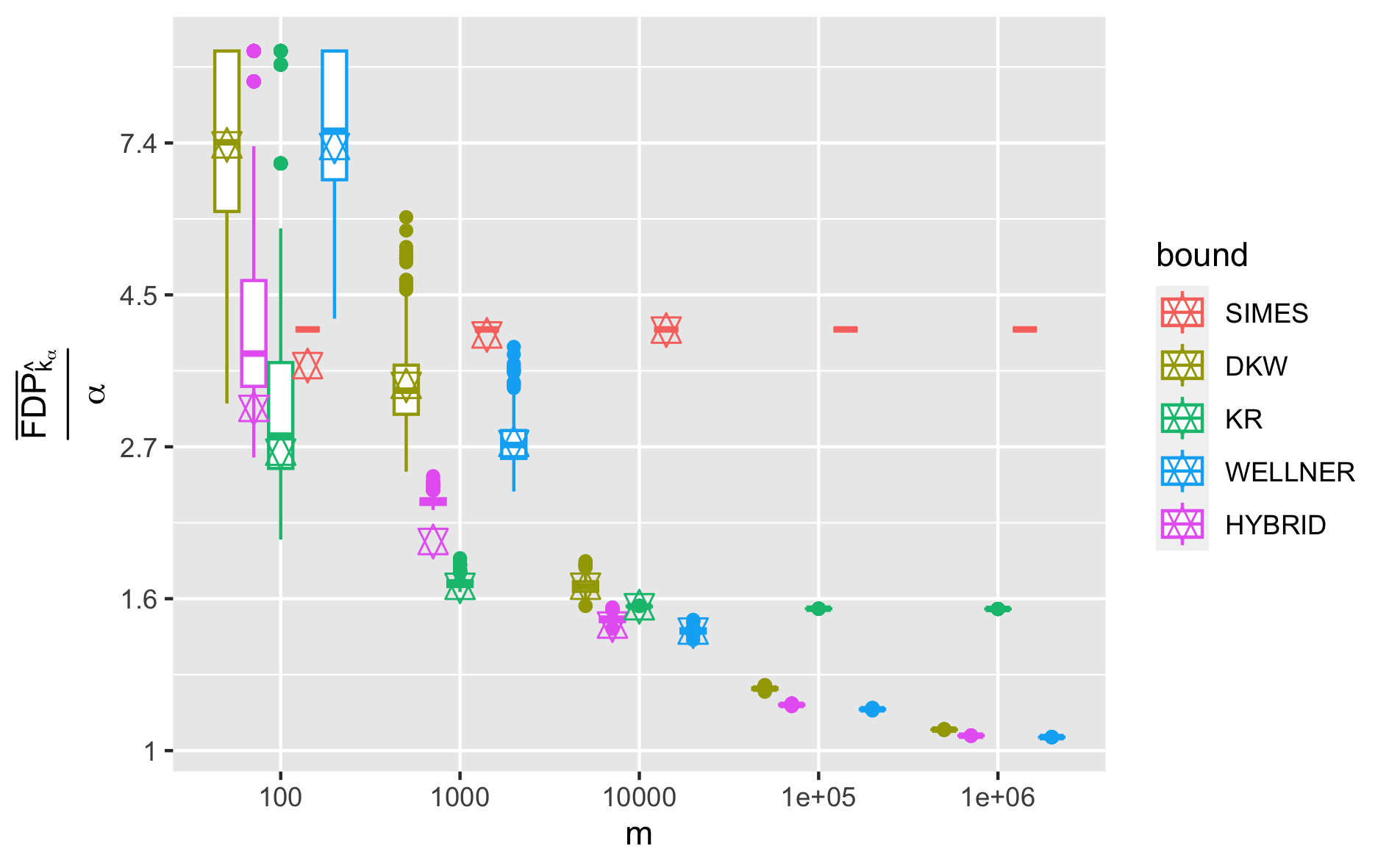}\\
$\alpha=0.15$ & $\alpha=0.2$\\
\includegraphics[scale=0.12]{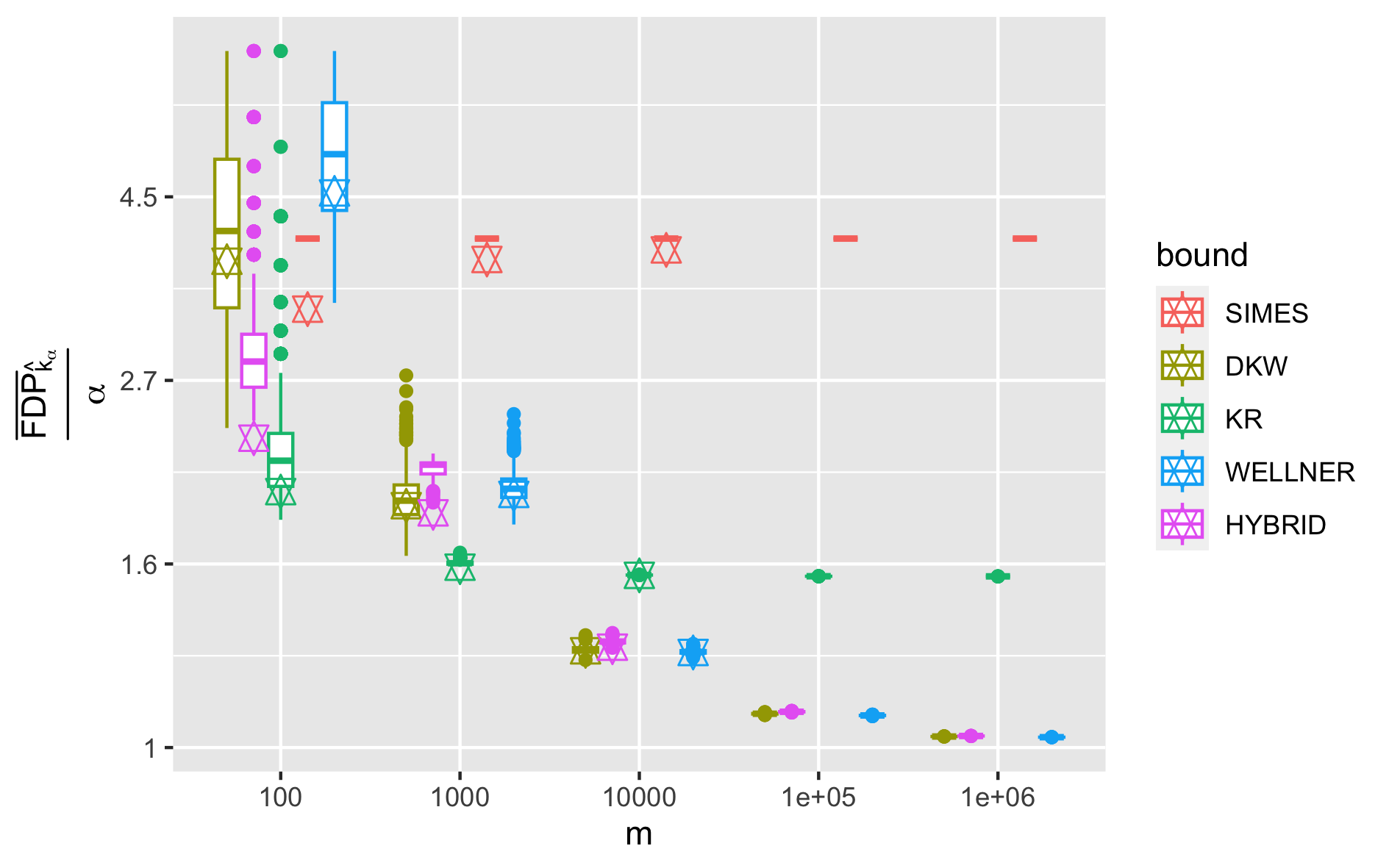}&\includegraphics[scale=0.12]{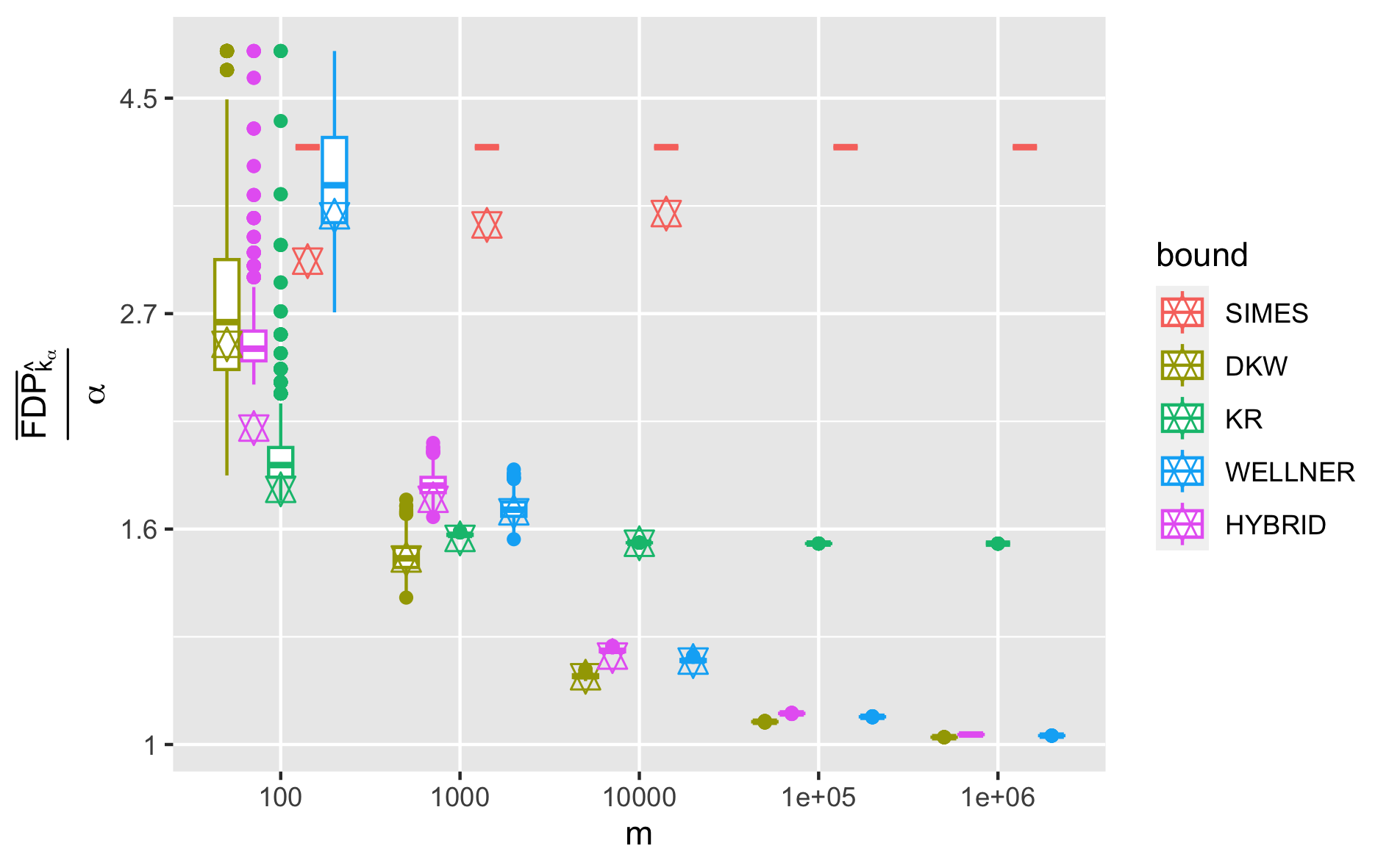}
\end{tabular}
\caption{Figure~\ref{fig:topkdense} where we have  superposed in each case  the (median of the) interpolated bounds (star symbols). Top-$k$ dense case ($\pi_0=0.5$, $\mu=1.5$).\label{fig:topkdense_inter}}
\end{figure}
\end{center}

\begin{center}
\begin{figure}[h!]
\begin{tabular}{cc}
\includegraphics[scale=0.12]{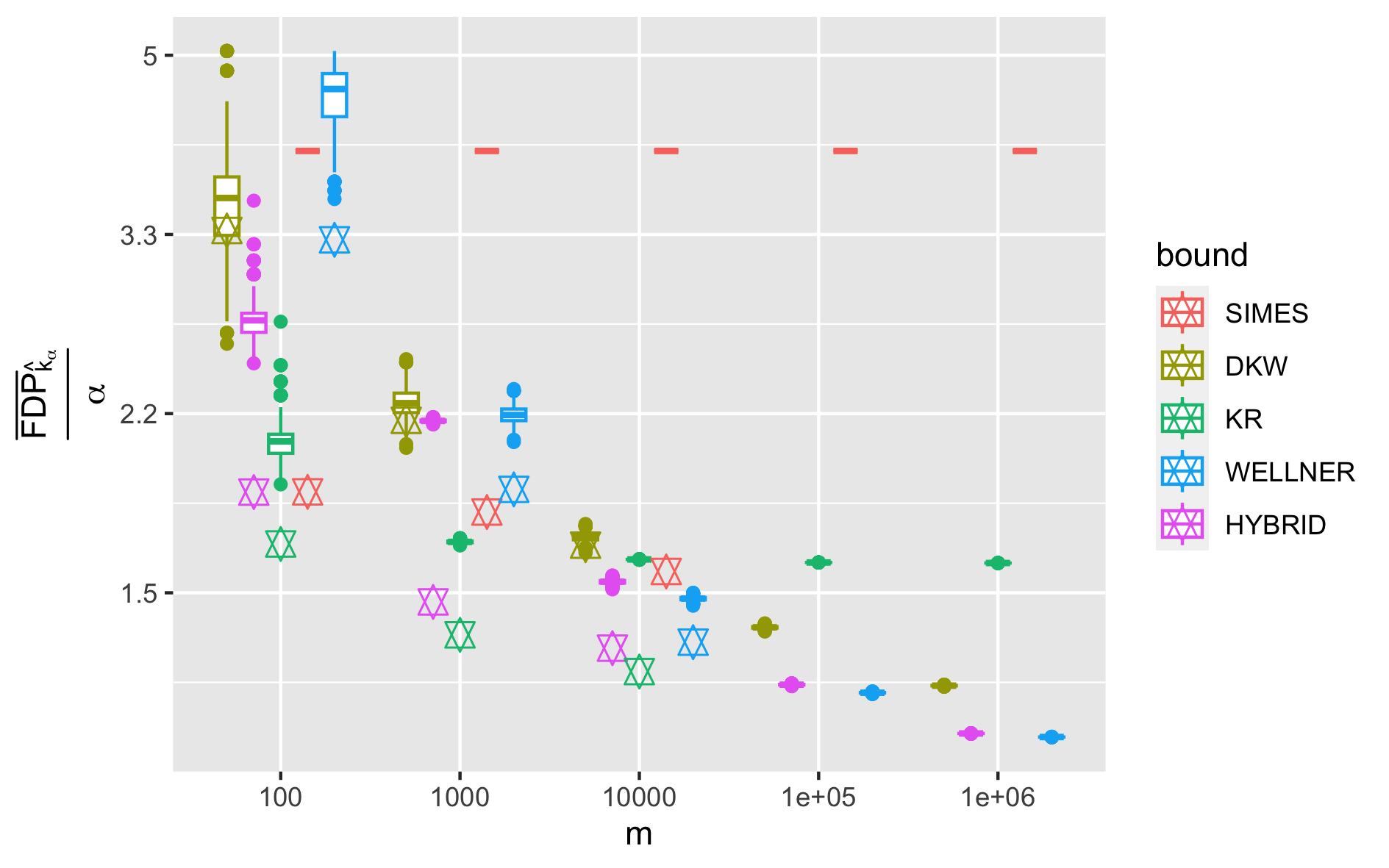}&\includegraphics[scale=0.12]{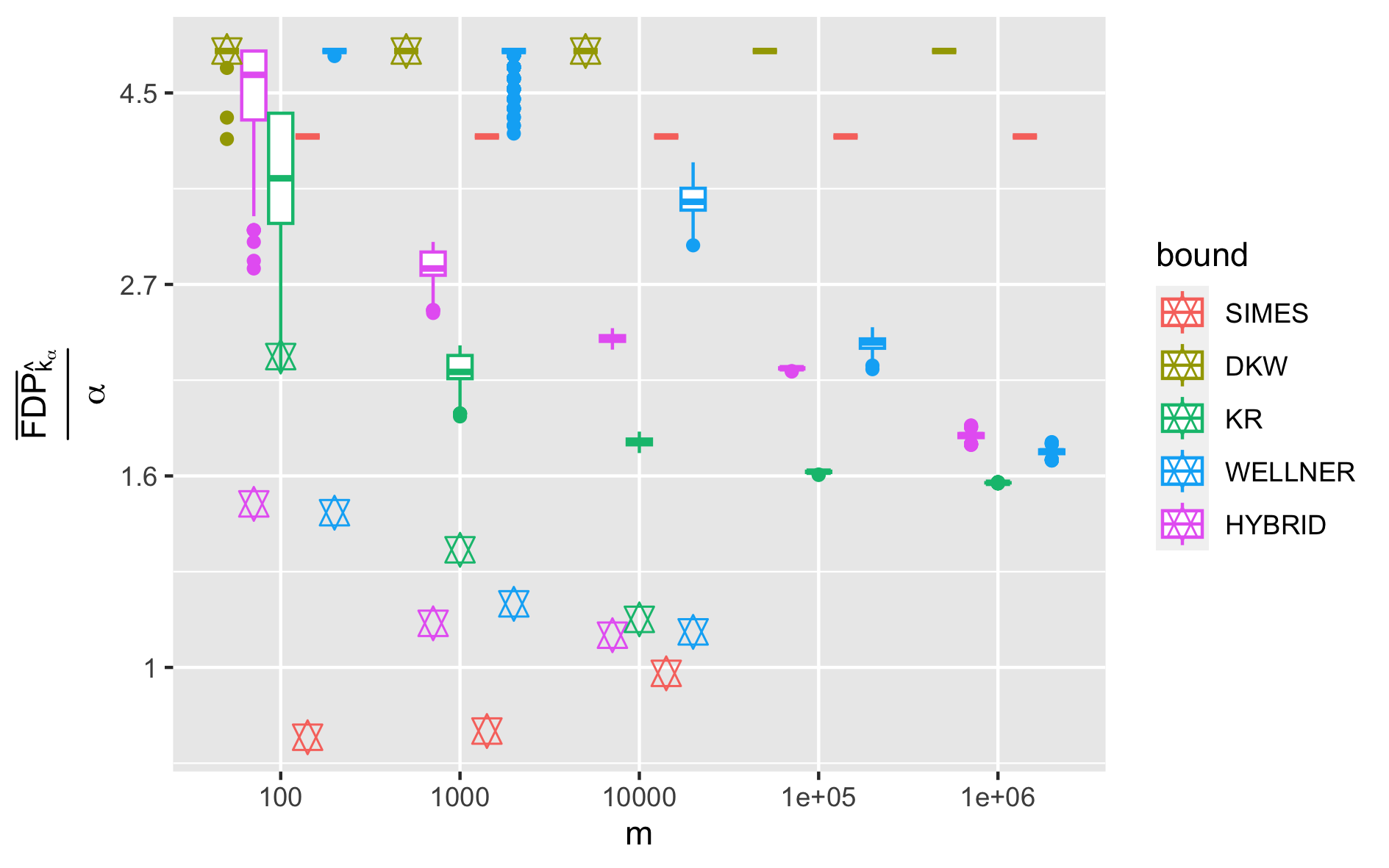}\\
\end{tabular}
\caption{Figure~\ref{fig:topksparse} where we have  superposed in each case  the (median of the) interpolated bounds (star symbols). Top-$k$ sparse case $\pi_0=1- 0.5 m^{-0.25}$, $\mu=\sqrt{2\log m}$  (left) $\pi_0= 1- 0.5 m^{-0.55}$, $\mu=${10}  (right), $\alpha=0.2$. \label{fig:topksparse_inter}} 
\end{figure}
\end{center}

\begin{center}
\begin{figure}[h!]
\begin{tabular}{cc}
Non adaptive & Adaptive \\
\includegraphics[scale=0.12]{topk/with_interp/dense_alpha_0_2.png}&\includegraphics[scale=0.12]{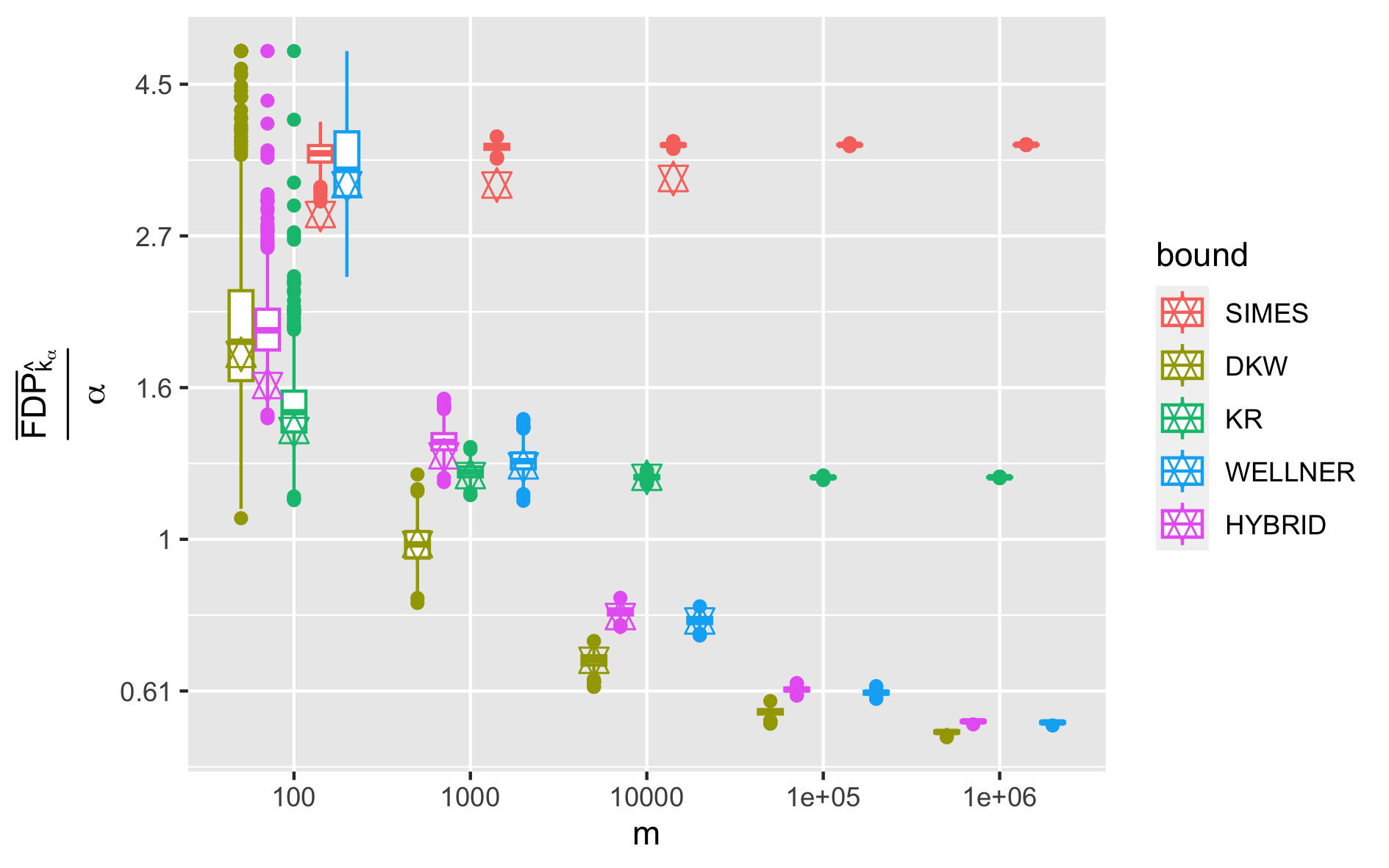}
\end{tabular}
\caption{Figure~\ref{fig:topkadapt} where we have  superposed in each case  the (median of the) interpolated bounds (star symbols).  Top-$k$ dense case with nonadaptive bounds (left) and adaptive bounds (right) ($\pi_0=0.5$, $\alpha=0.2$). \label{fig:topkadapt_inter}}
\end{figure}
\end{center}

\begin{center}
\begin{figure}[h!]
\begin{tabular}{cc}
$\alpha=0.05$ & $\alpha=0.1$\\
\includegraphics[scale=0.12]{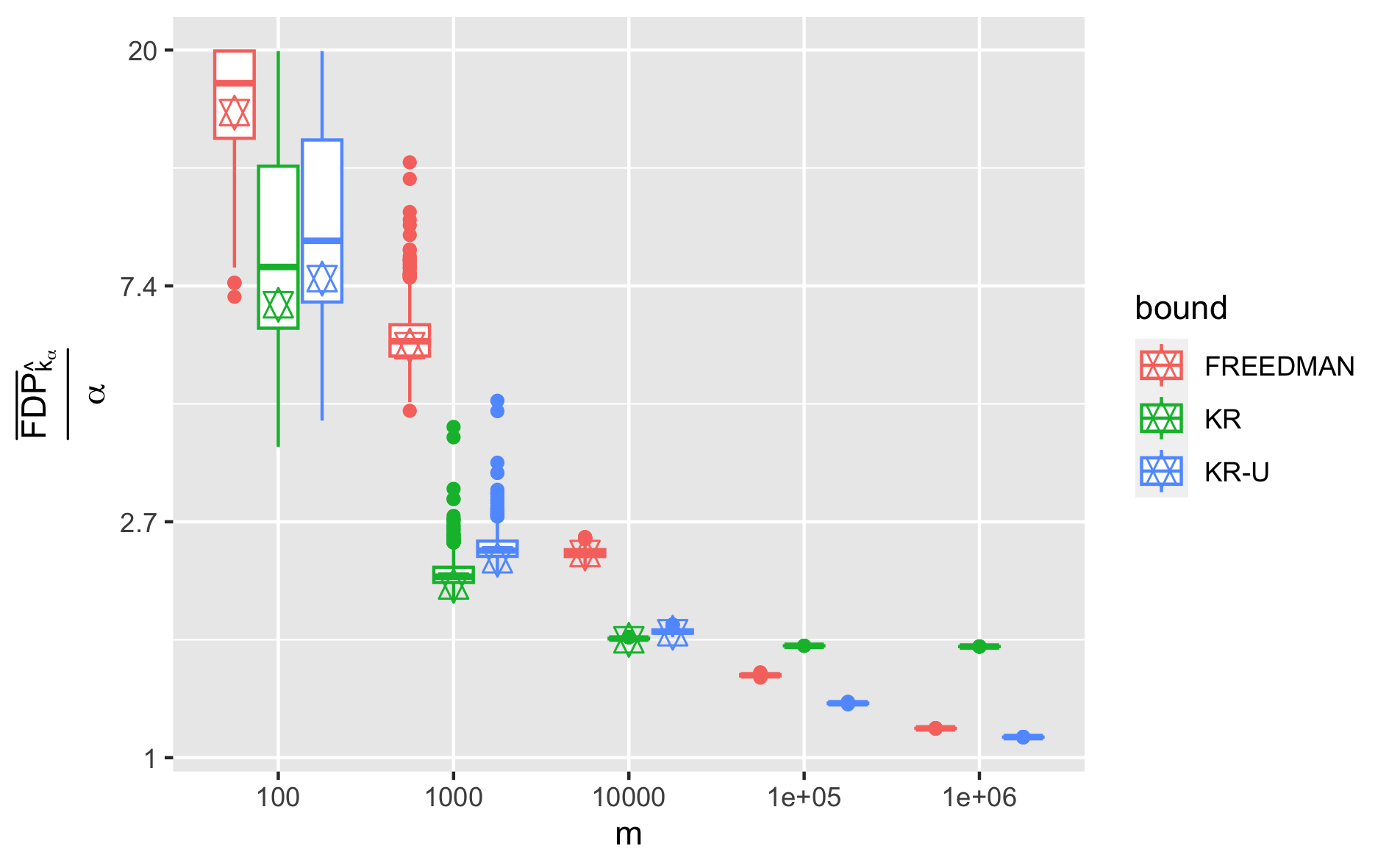}&\includegraphics[scale=0.12]{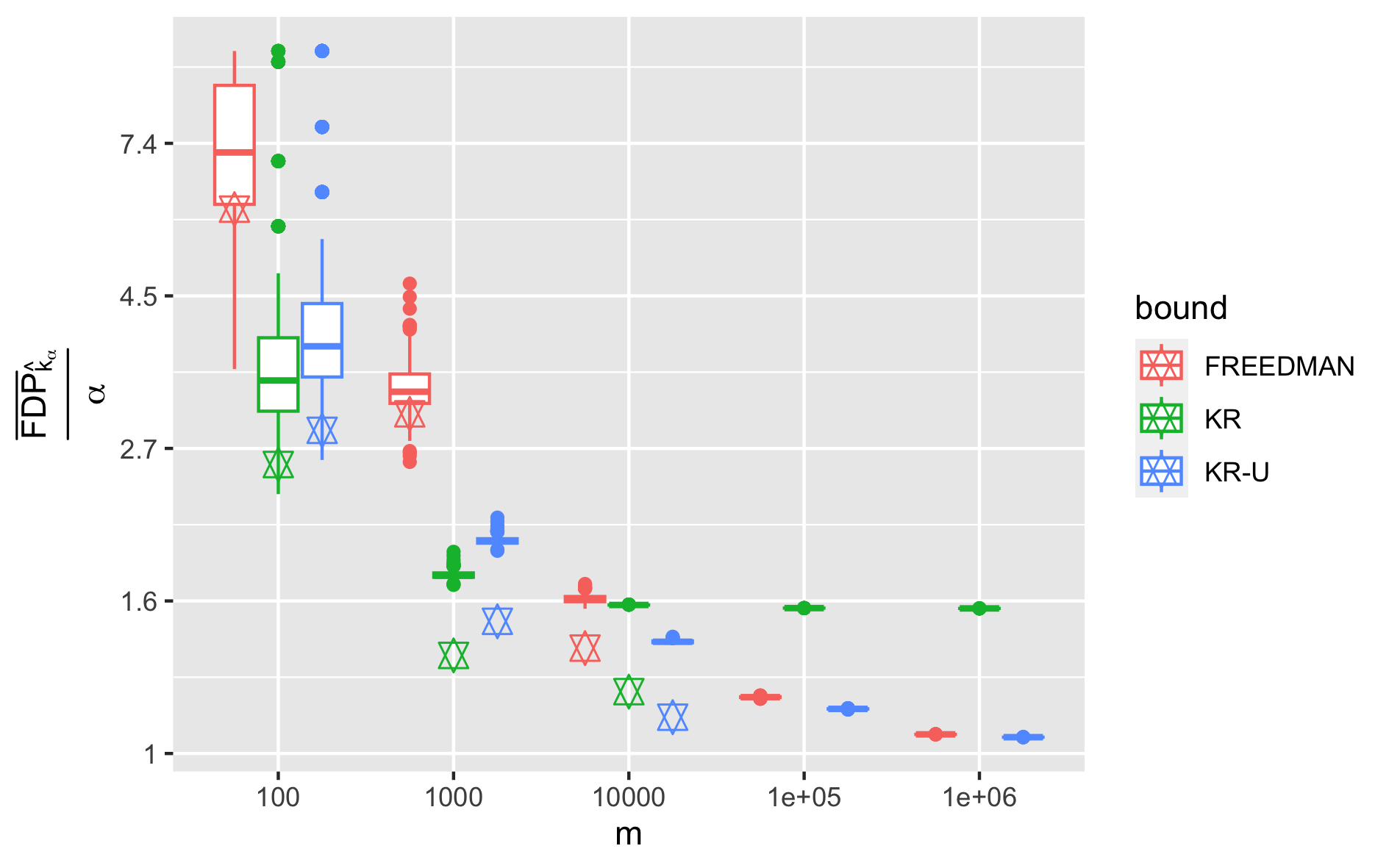}\\
$\alpha=0.15$ & $\alpha=0.2$\\
\includegraphics[scale=0.12]{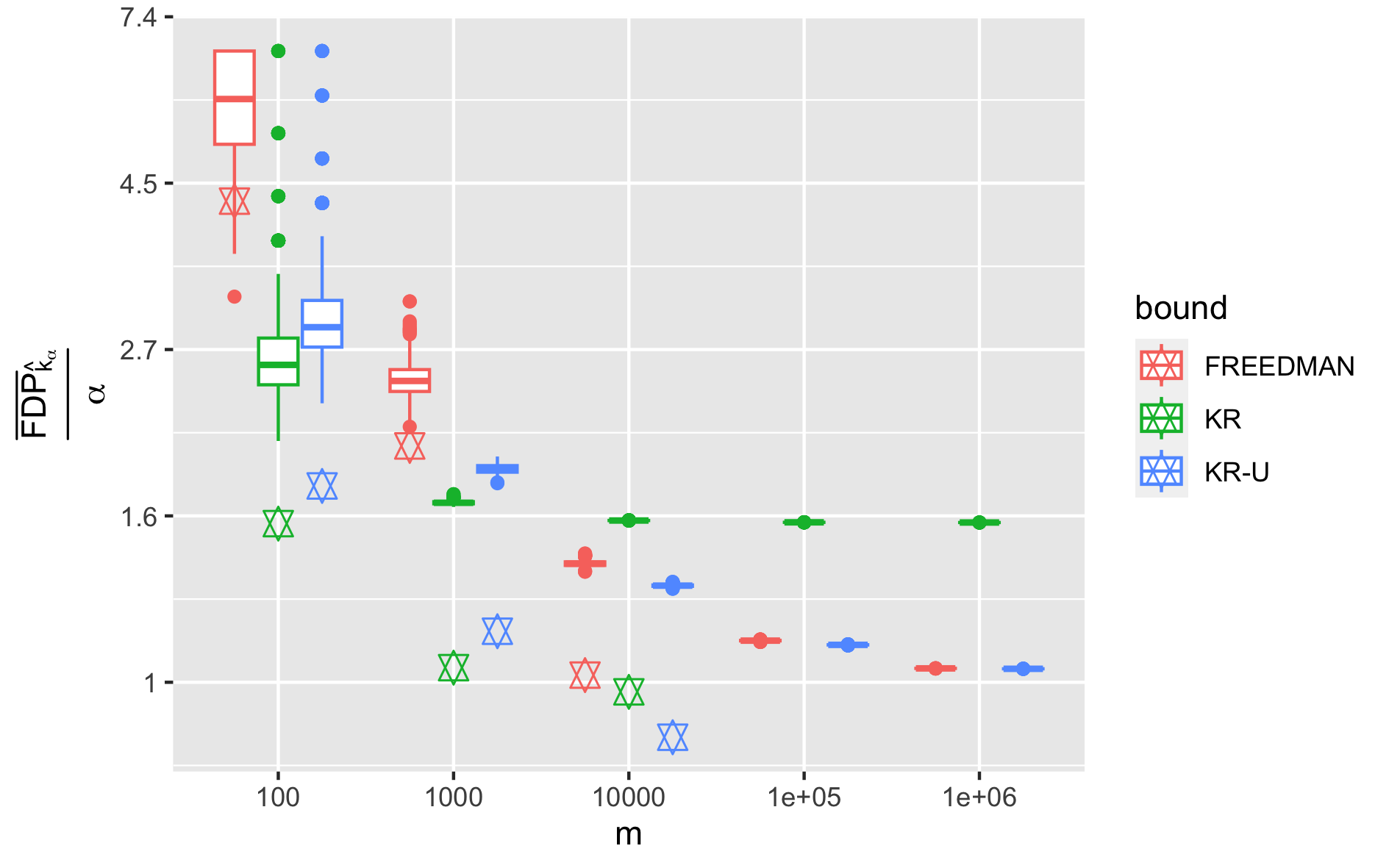}&\includegraphics[scale=0.12]{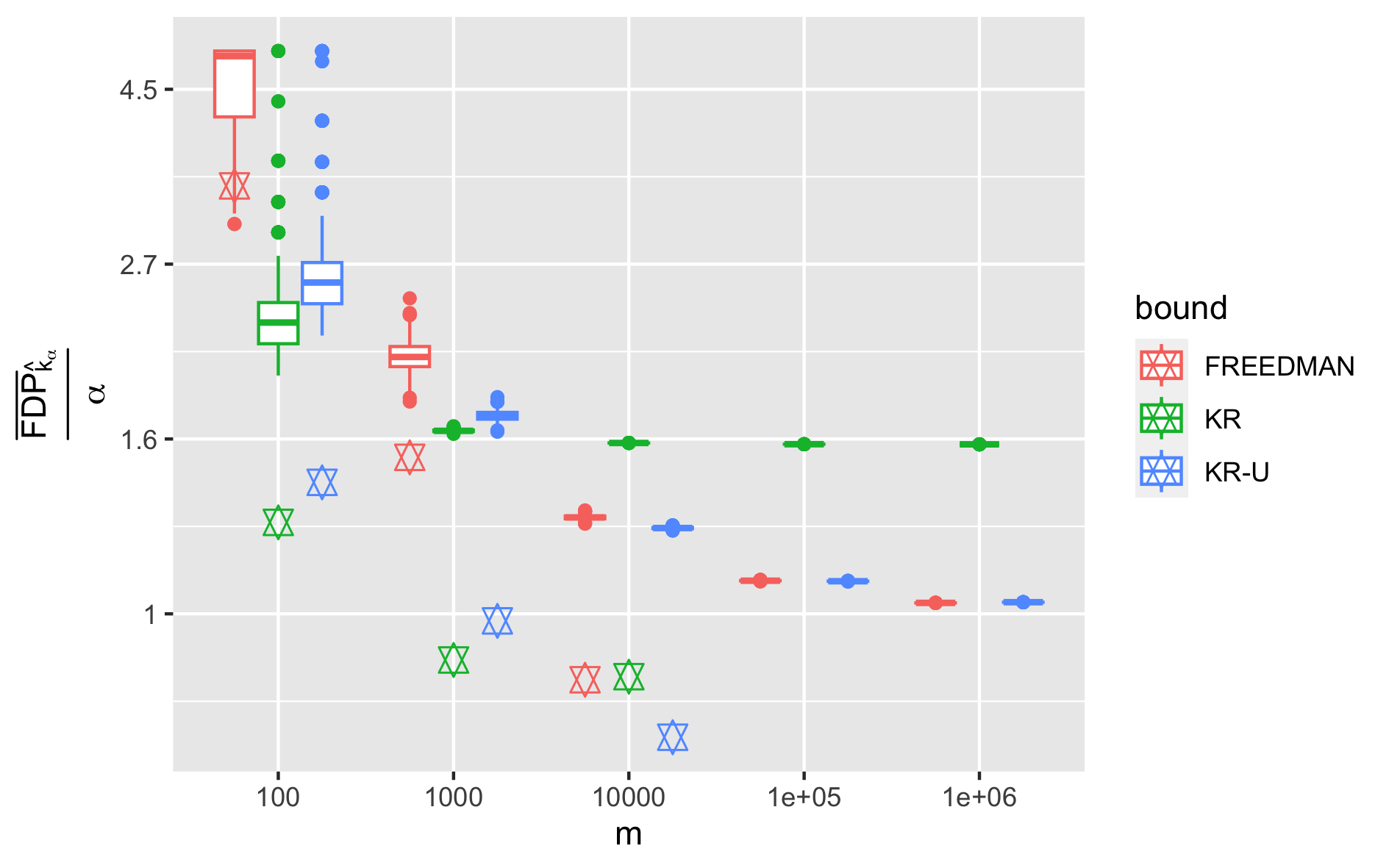}
\end{tabular}
\caption{Figure~\ref{fig:preorderLFdense} where we have  superposed in each case  the (median of the) interpolated bounds (star symbols). Preordered dense ($\beta=0$) LF setting with LF procedure ($s=0.1 \alpha$, $\lambda=0.5$).\label{fig:preorderLFdense_inter}}
\end{figure}
\end{center}

\begin{center}
\begin{figure}[h!]
\begin{tabular}{cc}
$\alpha=0.05$ & $\alpha=0.1$\\
\includegraphics[scale=0.12]{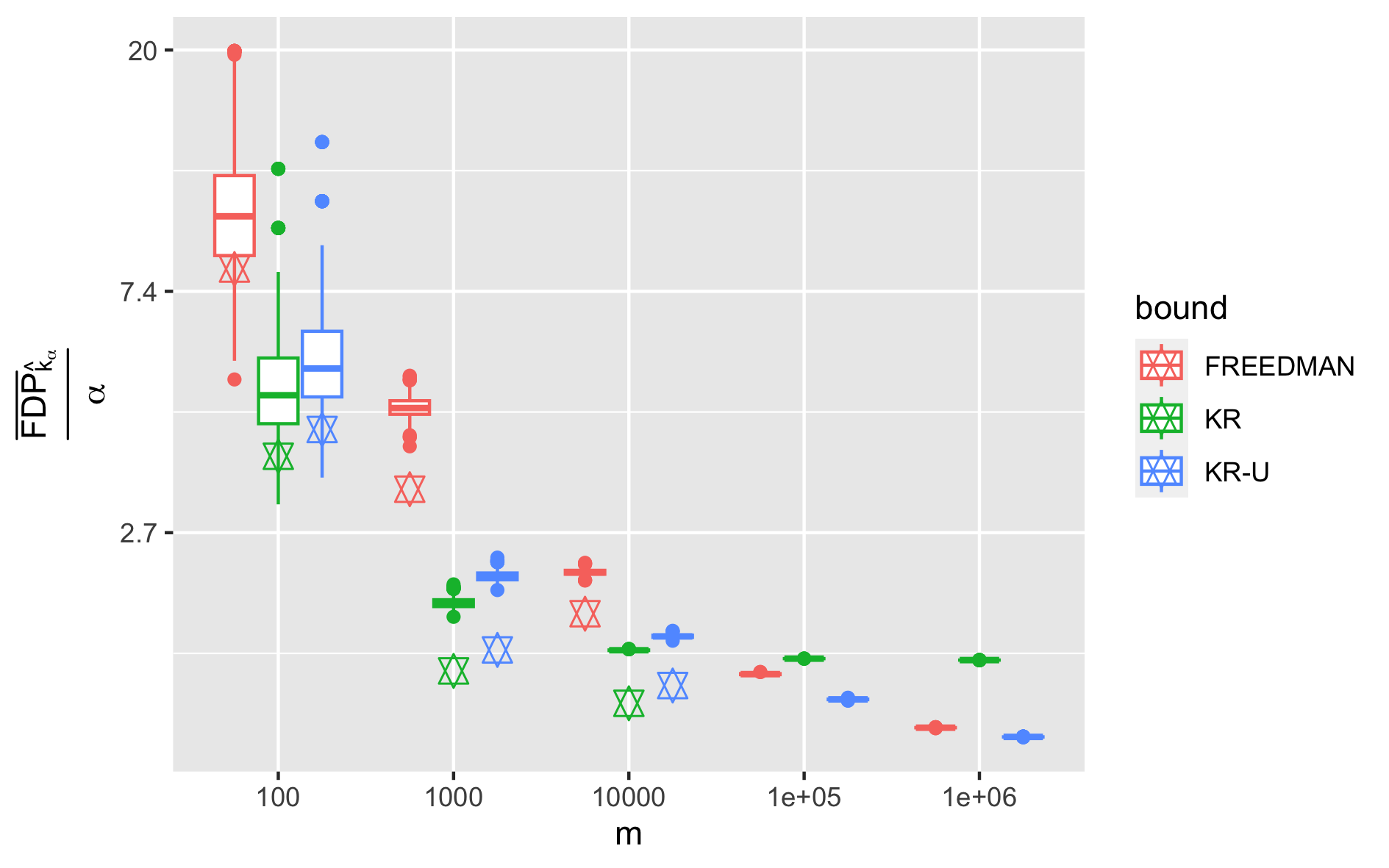}&\includegraphics[scale=0.12]{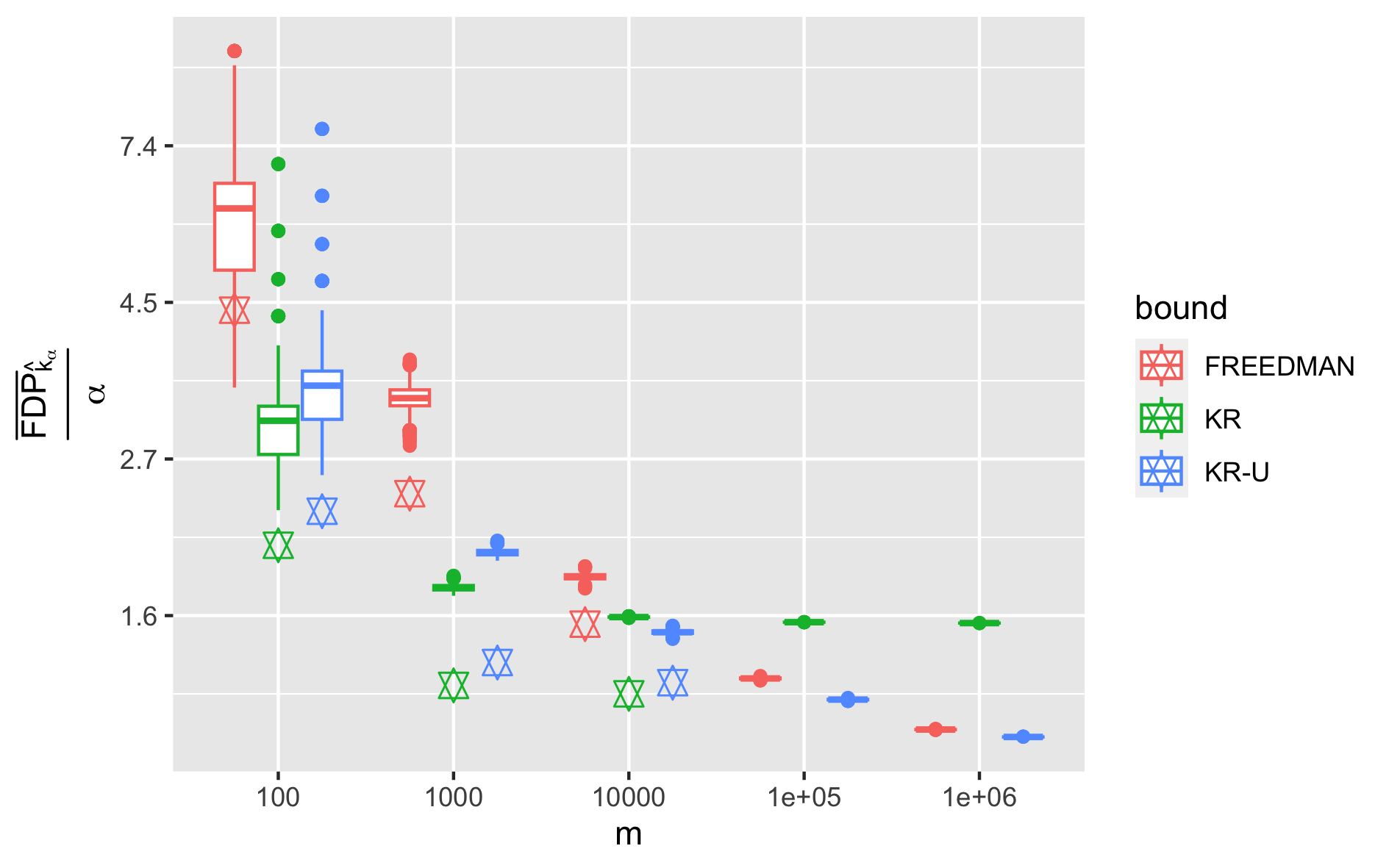}\\
$\alpha=0.15$ & $\alpha=0.2$\\
\includegraphics[scale=0.12]{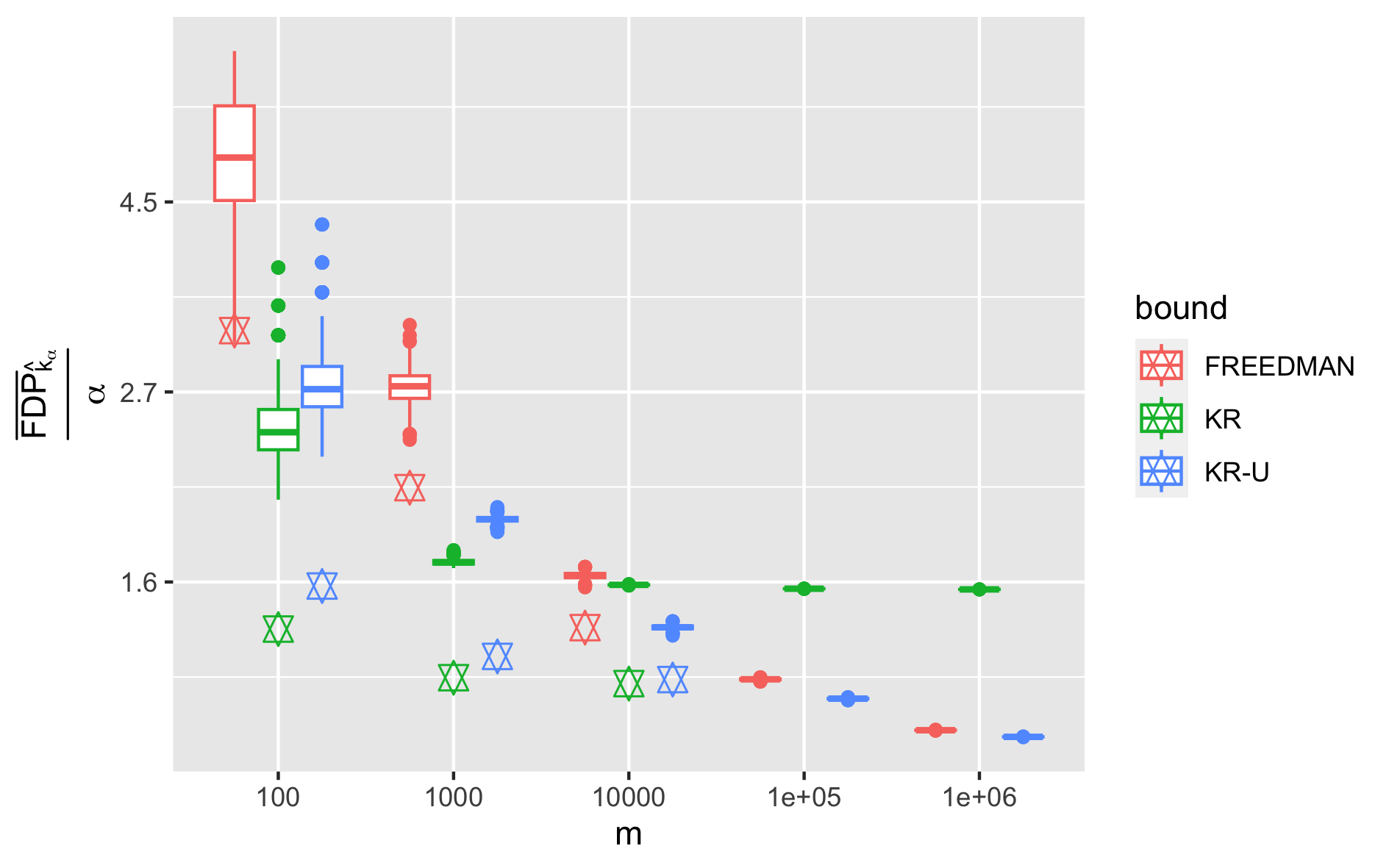}&\includegraphics[scale=0.12]{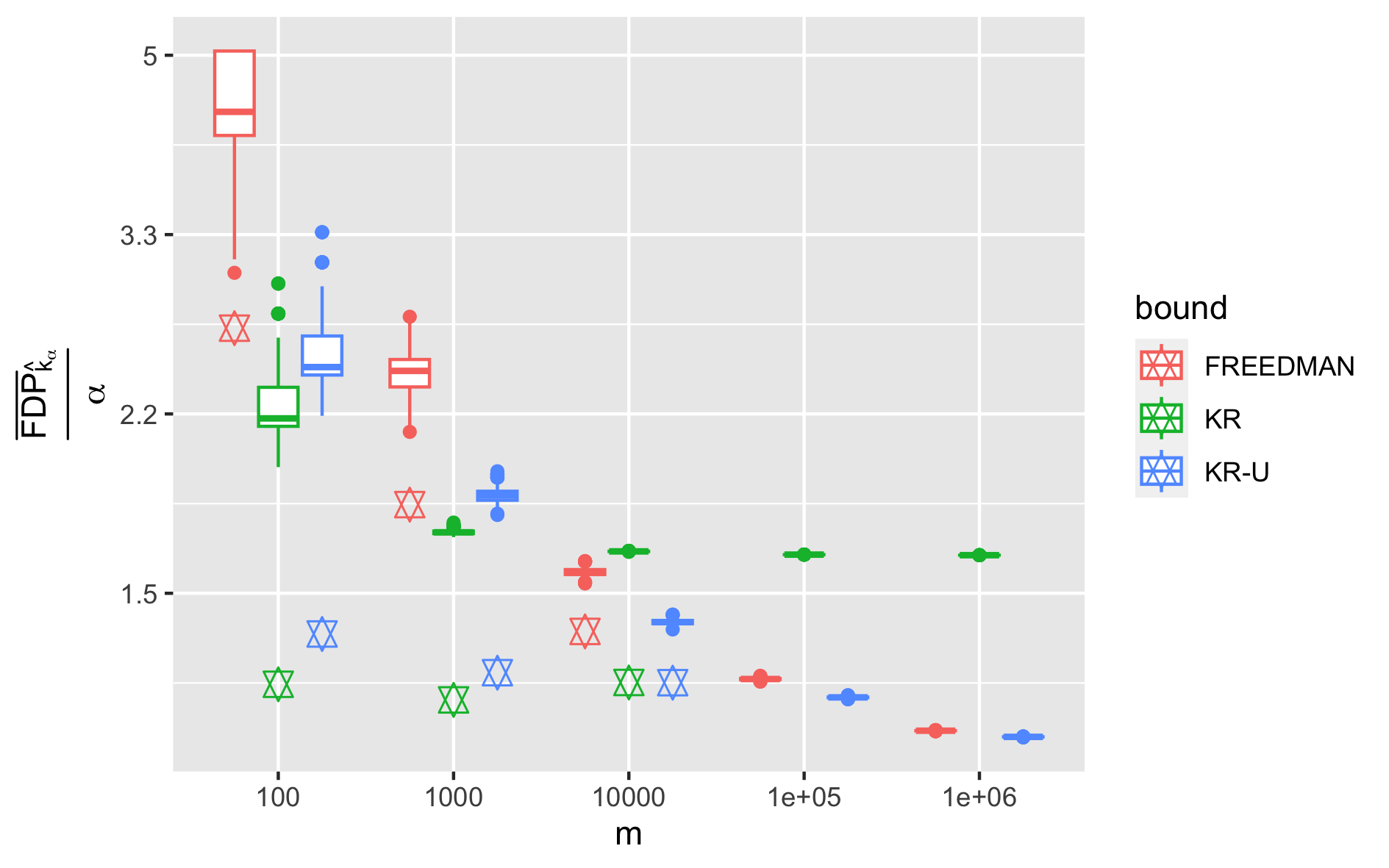}
\end{tabular}
\caption{Figure~\ref{fig:preorderLFsparse} where we have  superposed in each case  the (median of the) interpolated bounds (star symbols). Preordered sparse ($\beta=0.25$) LF setting with LF procedure ($s=0.1 \alpha$, $\lambda=0.5$). \label{fig:preorderLFsparse_inter}}
\end{figure}
\end{center}

\begin{center}
\begin{figure}[h!]
\begin{tabular}{cc}
$\alpha=0.15$ & $\alpha=0.2$\\
\includegraphics[scale=0.12]{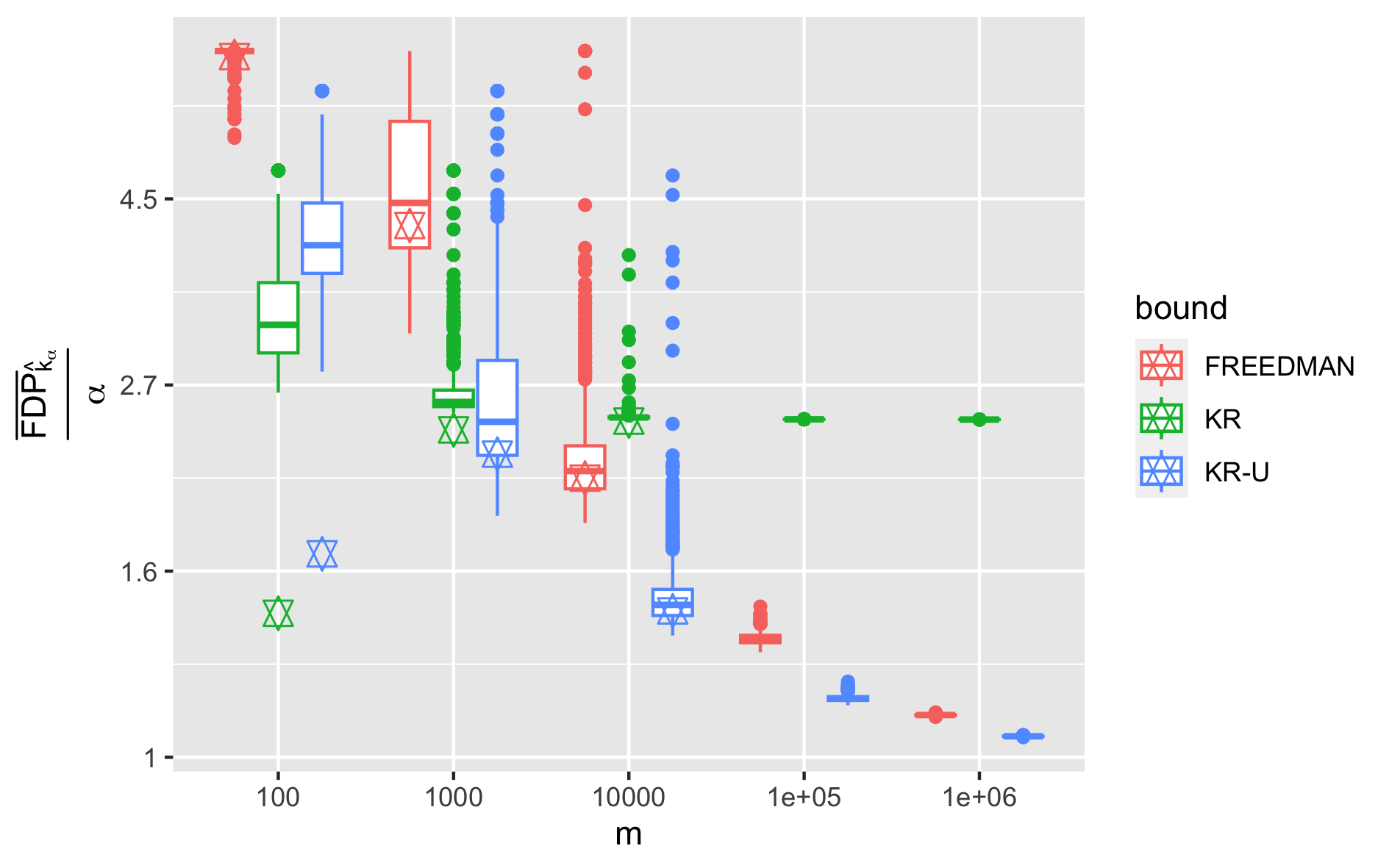}&\includegraphics[scale=0.12]{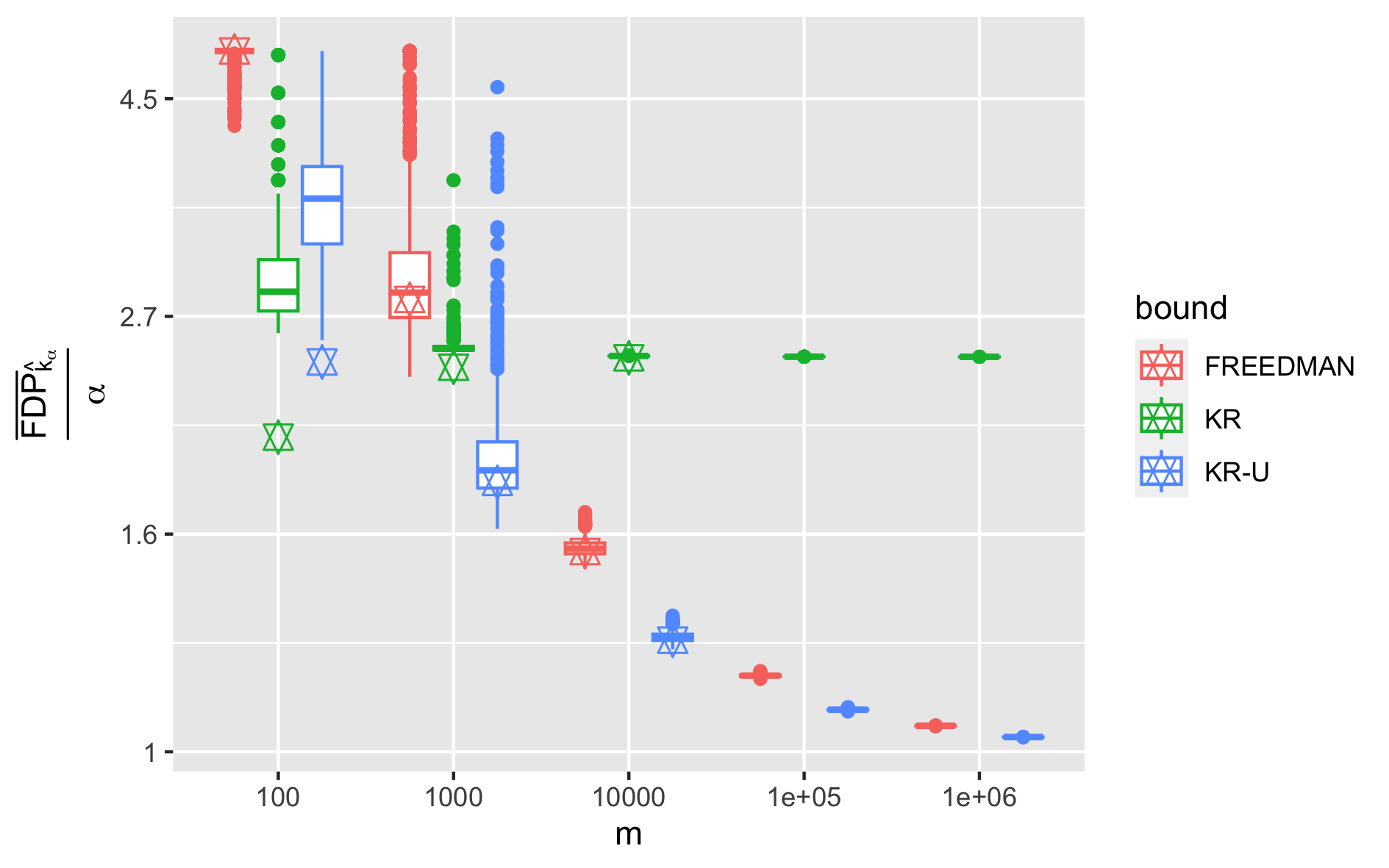}
\end{tabular}
\caption{Figure~\ref{fig:preorderknockoff} where we have  superposed in each case  the (median of the) interpolated bounds (star symbols). Pre-ordered dense ($\beta=0$) knockoff setting with BC procedure (i.e., LF procedure with $s=\lambda=0.5$). \label{fig:preorderknockoff_inter}}
\end{figure}
\end{center}

\begin{center}
\begin{figure}[h!]
\begin{tabular}{cc}
$\alpha=0.15$ & $\alpha=0.2$\\
\includegraphics[scale=0.12]{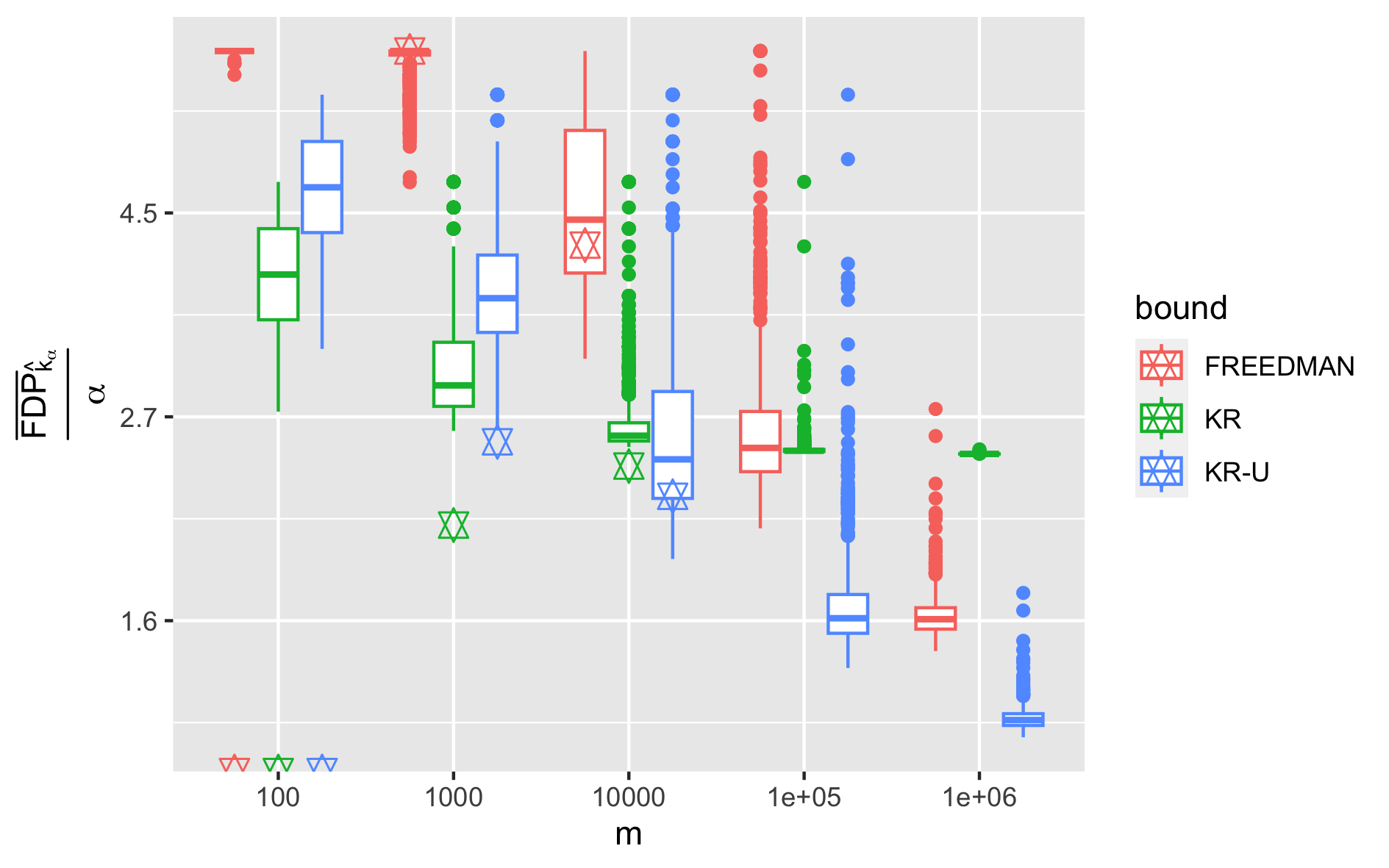}&\includegraphics[scale=0.12]{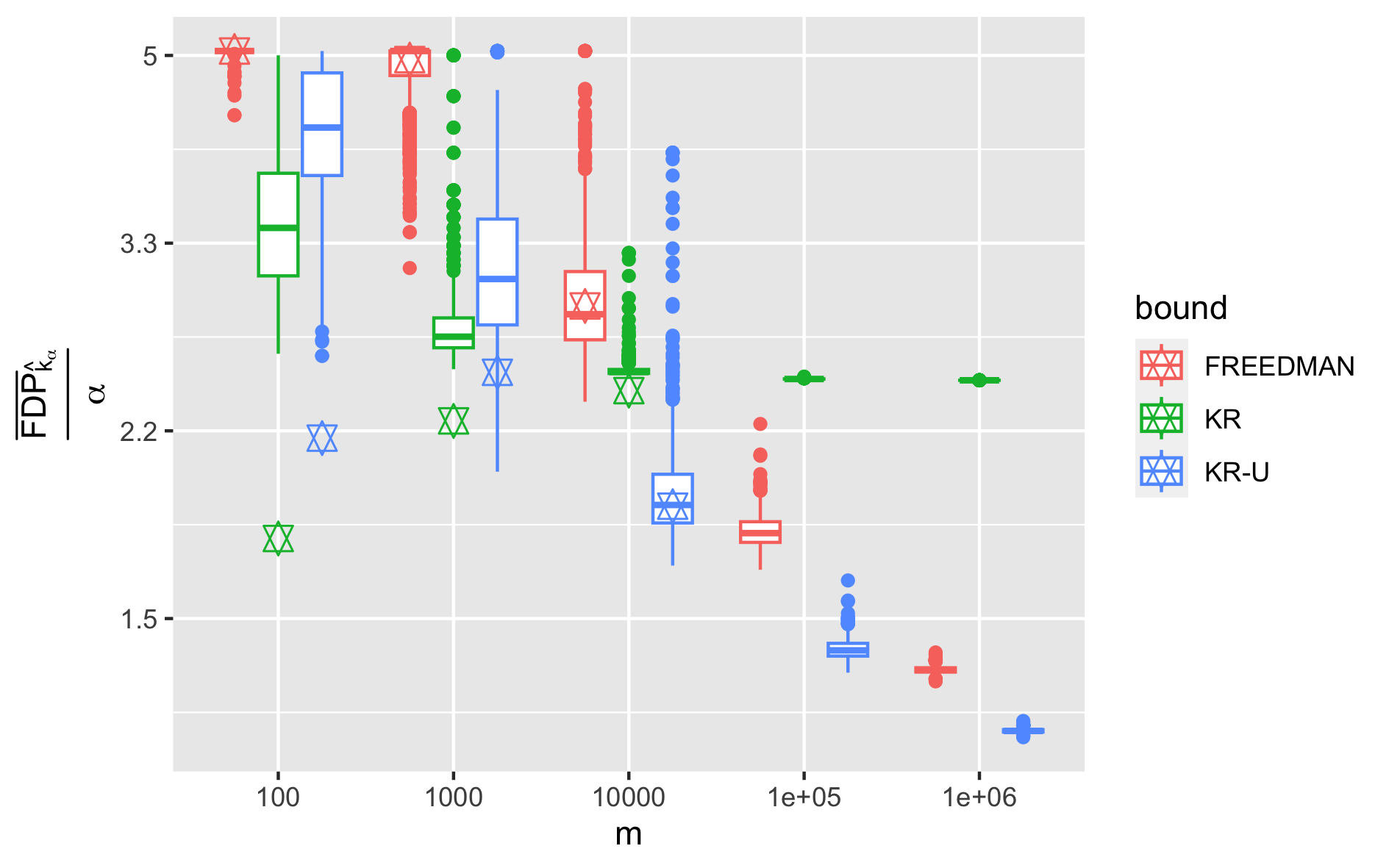}
\end{tabular}
\caption{Figure~\ref{fig:preorderknockoffsparse} where we have  superposed in each case  the (median of the) interpolated bounds (star symbols). Pre-ordered sparse ($\beta=0.25$) knockoff setting with BC procedure (i.e., LF procedure with $s=\lambda=0.5$). \label{fig:preorderknockoffsparse_inter}}
\end{figure}
\end{center}

  \subsection{{Closed testing bounds}}\label{sec:closedexp}

{
Let us consider the top-$k$ setting with $m$ null hypotheses, and consider any nonnegative sequence $\ell_{i,k}\in [0,1]$, $1\leq i\leq k$, $1\leq k\leq m$. Let $\ell_{i,k}=\ell_{i,i}$ for $i>k\geq 1$ and $\l_{i,0}=1$.
Assume that $\ell_{i,k}\geq \ell_{i,k'}$ for $1\leq k\leq k'\leq m$ for all $i\geq 1$. It includes the following cases:
\begin{itemize}
\item Simes: $\ell_{i,k}=\delta i/k$;
\item KR: 
$
\ell_{i,k}= \frac{\log(1+\log(1/\delta))}{\log(1/\delta)} i/k -1/k,
$
(for $\delta\leq 0.31$);
\item DKW: 
$
\ell_{i,k}=i/k- \sqrt{\log(1/\delta)/2}\: k^{-1/2}.
$
\end{itemize}
\begin{theorem}[Lemma~6 in \cite{GHS2021}]
In the top-$k$ setting, consider any sequence $(\ell_{i,k})_{i,k}$ as above and assume that for all $S\subset \cH_0$, $\P(\exists i\in \{1,\dots,|S|\}\::\: p_{(i:S)}\leq \ell_{i,|S|} )\leq \delta$. Then the closed-testing FDP envelope 
\begin{align}
\overline{\FDP}_k&=\min_{1\leq k'\leq k} \Big\{  \sum_{1\leq j\leq k}\ind{p_{(j)}> \ell_{k',\hat m_0}}+k'-1\Big\}/k;\label{equFDPbarclosed}\\
\hat{m}_0&=\max\{0\leq j\leq m\::\: \mbox{for all $i\in \{1,\dots,j\}$, } p_{(m-j+i)}>\ell_{i,j}\}\label{m0chapclosed}
\end{align}
 is valid in the sense of \eqref{confenvelop}.
\end{theorem}
The form of the closed-testing FDP bound \eqref{equFDPbarclosed} turns out to coincide with the adaptive interpolated bounds of Section~\ref{sec:interptopk} (improved by adding an integer part). This is exemplified in the next result for the Simes sequence.
\begin{lemma}\label{lem:simesclosed}
Consider the Simes sequence $\ell_{i,k}=\delta i/k$. Then, on the event where all $p$-values are different from all  thresholds $\ell_{i,k}$,  the closed-testing bound \eqref{equFDPbarclosed} is equal to 
$$
k\wedge \min_{1\leq k'\leq k} \{k-k'+  k'\wedge \lfloor  \hat{m}_0 p_{(k')}/\delta\rfloor\},
$$
\end{lemma}
Lemma~\ref{lem:simesclosed} shows that, for the Simes threshold, the closed testing bound improves the interpolated one only in the way $m_0$ is estimated. 
The closed-testing $m_0$ estimator \eqref{m0chapclosed} is by essence more accurate than those that we proposed in  Section~\ref{sec:adaptive}, but is also more computationally demanding. 
In addition, in our experiments, the improvement is modest in general, as shown in Figure~\ref{fig:close-testing-adaptivetopk}.  
We see the closed-testing versions of our bounds as advisable when $m$ is small, because the improvement seems to be the most significant in that case while the complexity is still low. 
In addition, this figure also suggests that the closed testing versions of Simes and KR bounds are $m$-inconsistent, which corroborates the theoretical findings of Corollary~\ref{prop:inconsist} in the Simes case.
}

{
\begin{proof}
Define
$\mathcal{U}=\{ u_{(k')}, 1\leq k'\leq k\}$, with $u_{(k')}=\lfloor \hat m_0 p_{(k')}/\delta\rfloor$. 
We have
\begin{align*}
k\wedge \min_{1\leq k'\leq k} \{k-k'+  k'\wedge \lfloor \hat m_0 p_{(k')}/\delta\rfloor\} &=k\wedge \min_{1\leq k'\leq k} \{k-k'+   \lfloor \hat m_0 p_{(k')}/\delta\rfloor\} \\
&=k\wedge \min_{u\in \mathcal{U}} \Big\{ k-\sum_{j=1}^k \ind{u_{(j)} \leq u } + u \Big\}\\
&= k\wedge\min_{u\in \mathcal{U}} \Big\{ \sum_{j=1}^k \ind{u_{(j)} > u } + u \Big\}\\
&= k\wedge\min_{u\in \mathcal{U}} \Big\{ \sum_{j=1}^k \ind{u_{(j)} \geq  u +1} + u \Big\}\\
&= k\wedge\min_{u\in \mathcal{U}} \Big\{ \sum_{j=1}^k \ind{  \hat m_0 p_{(j)}/\delta  \geq  u +1} + u \Big\}\\
&=k\wedge\min_{v\in \mathcal{U}+1} \Big\{ \sum_{j=1}^k \ind{  p_{(j)} \geq  v\delta/\hat m_0 } + v-1\Big\}.
\end{align*}
Fix now $w\in \{1,\dots,k\}$, and let us prove
\begin{equation}\label{equtoproved}
 \sum_{j=1}^k \ind{  p_{(j)} \geq  w\delta/\hat m_0 } + w-1\geq k\wedge\min_{v\in \mathcal{U}+1} \Big\{ \sum_{j=1}^k \ind{  p_{(j)} \geq  v\delta/\hat m_0 } + v-1\Big\}.
\end{equation}
First observe that for all $j \in \{1,\dots,k\}$,
\begin{equation}\label{equinterm}
p_{(j)} \geq  w\delta/\hat m_0 \Leftrightarrow \hat m_0 p_{(j)}/\delta \geq  w \Leftrightarrow \lfloor \hat m_0 p_{(j)}/\delta \rfloor  \geq  w \Leftrightarrow u_{(j)} +1 >  w.
\end{equation}
Hence if for all $v\in \mathcal{U}+1 $ we have $v> w$, \eqref{equtoproved} is satisfied. Otherwise, there is one $v\in \mathcal{U}+1$ such that  $v\leq w$ and we can consider $v_w=\max\{v\in \mathcal{U}+1  \::\: v\leq w\}$ the maximum of the elements of $\mathcal{U}+1$ that are below $w$. From \eqref{equinterm}, we have for all
$j \in \{1,\dots,k\}$, $p_{(j)} \geq  w\delta/\hat m_0 \Leftrightarrow u_{(j)} +1 >  v_w \Leftrightarrow p_{(j)} \geq  v_w\delta/\hat m_0$,
which means $\sum_{j=1}^k \ind{  p_{(j)} \geq  w\delta/\hat m_0 } =\sum_{j=1}^k \ind{  p_{(j)} \geq  v_w\delta/\hat m_0 }$ and thus since $w\geq v_w$, the inequality \eqref{equtoproved} is also satisfied. 
 This establishes in any case 
$$
k\wedge\min_{v\in \mathcal{U}+1} \Big\{ \sum_{j=1}^k \ind{  p_{(j)} \geq  v\delta/m } + v-1\Big\} 
=k\wedge\min_{1\leq w\leq k} \Big\{ \sum_{j=1}^k \ind{  p_{(j)} \geq  w\delta/m } + w-1\Big\} .
$$
This gives the result.
\end{proof}
\begin{center}
	\begin{figure}[h!]
		\begin{tabular}{cc}
			$\alpha=0.1$ & $\alpha=0.2$\\
		\includegraphics[scale=0.12]{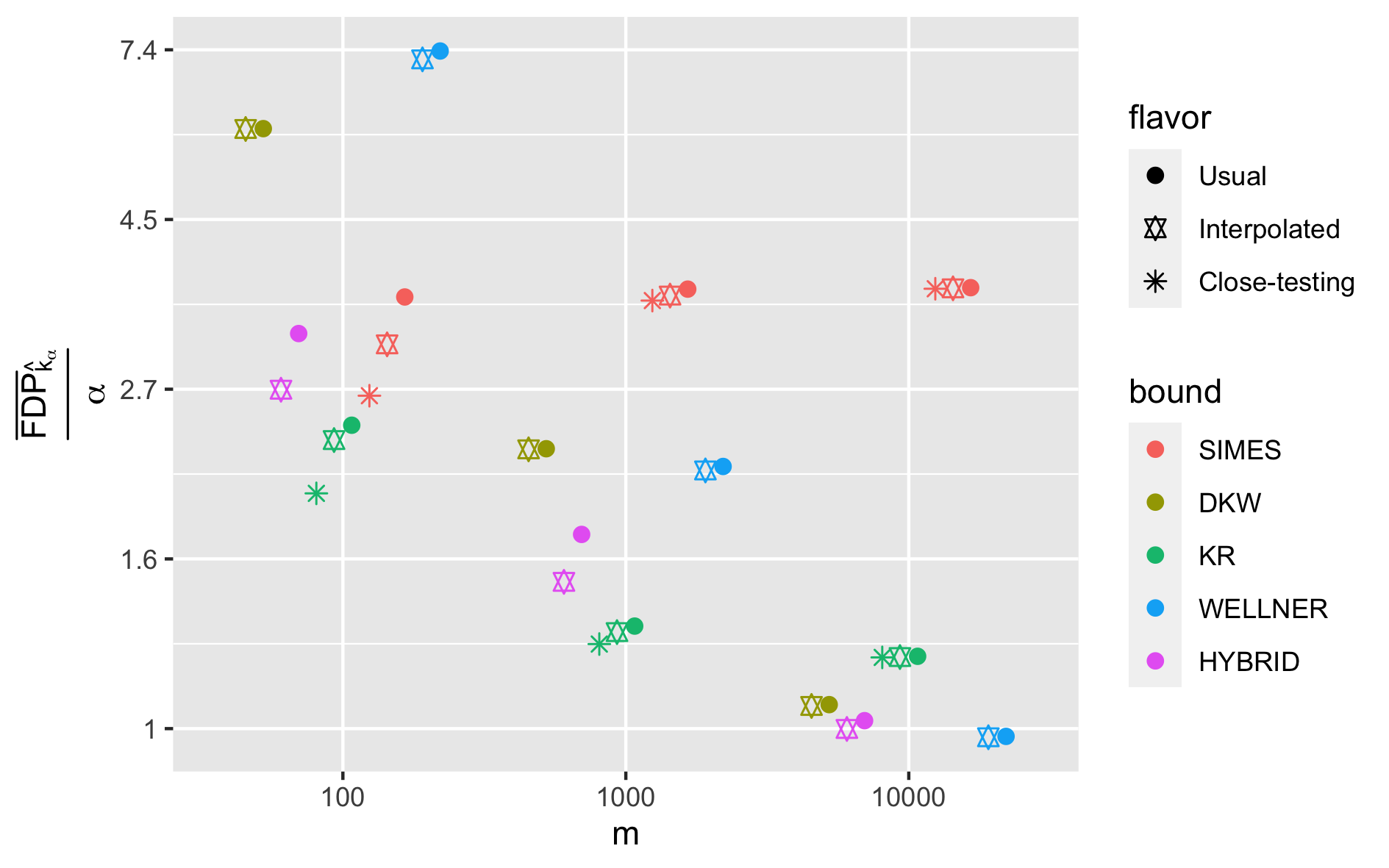}
		&\includegraphics[scale=0.12]{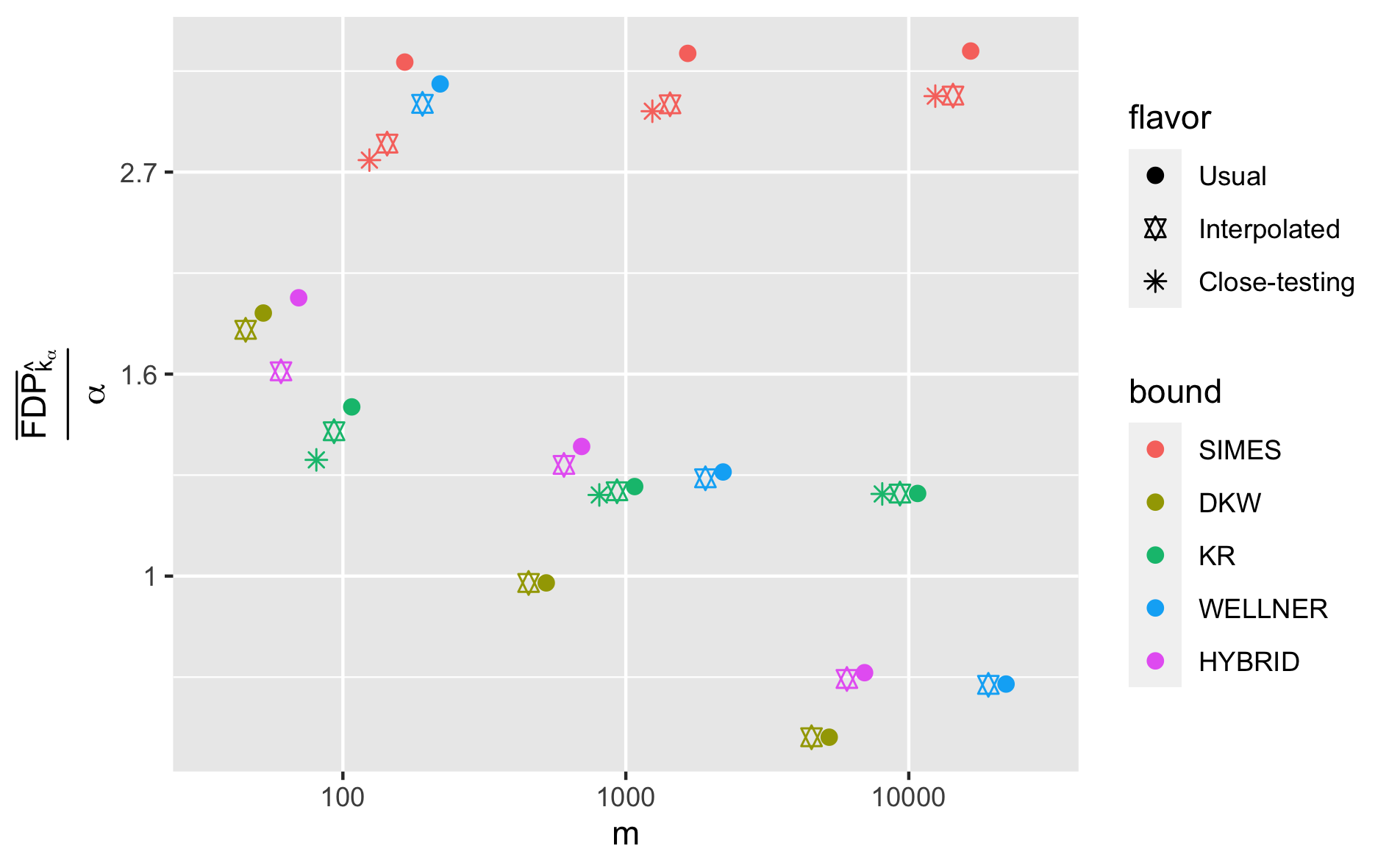}
		\end{tabular}
		\caption{
		{Median of the adaptive bounds of Section~\ref{sec:adaptive} (plain circles), median of interpolated bounds of Section~\ref{sec:interptopk} (hollow star), and median of closed-testing bounds given by \eqref{equFDPbarclosed} (asterix) in function of $m\in \{100,1000,10000\}$. The closed-testing is only computed for Simes and KR bounds. The simulation setting is the same as the one used for the right panel of Figure~\ref{fig:topkadapt} ($\pi_{0}= 0.5$).\label{fig:close-testing-adaptivetopk}}}
	\end{figure}
\end{center}
}

\end{document}